\theoremstyle{plain}
\newtheorem{theorem}{Theorem}
\newtheorem{lemma}{Lemma}
\newtheorem{corollary}{Corollary}
\newtheorem{proposition}{Proposition}
\theoremstyle{definition}
\newtheorem{definition}{Definition}
\newtheorem{example}{Example}
\theoremstyle{remark}
\numberwithin{equation}{section}
\begin{document}
\title[Nonsymmetric Jack Superpolynomials]{A Superpolynomial Version of Nonsymmetric Jack Polynomials}
\author{Charles F. Dunkl}
\address{Department of Mathematics\\
University of Virginia\\
Charlottesville, VA 22904-4137 }
\email{cfd5z@virginia.edu}
\urladdr{http://people.virginia.edu/\symbol{126}cfd5z/home.html}
\thanks{}
\date{31 August 2020}
\subjclass[2020]{Primary 33C52, 20C30; Secondary 17A70, 81Q80}
\keywords{vector-valued Jack polynomials, supersymmetric polynomials, hook tableaux}
\dedicatory{Dedicated to the memory of Dick Askey, who was my special functions teacher,
and who made it respectable to find exact answers to analysis problems.}
\begin{abstract}
Superpolynomials consist of commuting and anti-commuting variables. By
considering the anti-commuting variables as a module of the symmetric group
the theory of vector-valued nonsymmetric Jack polynomials can be specialized
to superpolynomials. The theory significantly differs from the supersymmetric
Jack polynomials introduced and studied in several papers by Desrosiers,
Mathieu and Lapointe (Nucl. Phys. B606, 2001). The vector-valued Jack
polynomials arise in standard modules of the rational Cherednik algebra and
were originated by Griffeth (T.A.M.S. 362, 2010) for the family $G\left(
n,\ell,N\right)  $ of complex reflection groups. In the present situation
there is an orthogonal basis of anti-commuting polynomials which corresponds
to hook tableaux arising in Young's representations of the symmetric group.
The basis is then used to construct nonsymmetric Jack polynomials by
specializing the machinery set up in a paper by Luque and the author (SIGMA
7,2011). There is an inner product for which these polynomials form an
orthogonal basis, and the squared norms are explicitly found. Supersymmetric
polynomials are obtained as linear combinations of the nonsymmetric Jack
polynomials contained in a submodule; this is based on an idea of Baker and
Forrester (Ann. Comb. 3, 1999). The Poincar\'{e} series for supersymmetric
polynomials graded by degree is obtained and is interpreted in terms of
certain minimal polynomials. There is a brief discussion of antisymmetric
polynomials and an application to wavefunctions of the Calogero-Moser quantum
model on the circle.

\end{abstract}
\maketitle

\section{Introduction}

Superpolynomials involve both commuting and anti-commuting variables. By
interpreting the latter as modules of the symmetric group $\mathcal{S}_{N}$
one can adapt the structure of vector-valued nonsymmetric Jack polynomials to
this setting. Desrosiers, Lapointe, and Mathieu \cite{DLM2001},
\cite{DLM2003a}, \cite{DLM2003}, \cite{DLM2007} constructed Jack
superpolynomials with the use of differential operators motivated by equations
of supersymmetric quantum mechanics. These are significantly different from
the operators to be set up in this work, however the definition of
supersymmetric polynomials is the same. Since our approach uses polynomials
taking values in $\mathcal{S}_{N}$-modules the paper begins with details on
the Young-type construction applied to polynomials in anti-commuting variables
of some fixed degree. Sections \ref{prelim} and \ref{hookrep} contain the
basic definitions and construction of tableau-like polynomials forming
orthogonal bases of the two irreducible modules. Section \ref{Jackpol} is a
brief overview of the vector-valued nonsymmetric Jack polynomials and provides
details of how the group acts on these polynomials. The theory has one free
parameter $\kappa$. As is usual in this area there are numerous arguments
using induction with simple reflections (adjacent transpositions). The
polynomials are a special case of the construction by S. Griffeth \cite{G2010}
which is defined for the family $G\left(  n,p,N\right)  $ of complex
reflection groups. There is a natural inner product for the space of Jack
polynomials for which they are mutually orthogonal. The formula for the
squared norm of a Jack polynomial as a rational function of the parameter
$\kappa$ is stated in this section. The paper of Dunkl and Luque \cite{DL2011}
is used as background reference.

Section \ref{supersym} uses a technique of Baker and Forrester \cite{BF1999}
and ideas from \cite{DL2011} to construct supersymmetric Jack polynomials and
determine the norms. In Section \ref{Pseries} the Poincar\'{e}-Hilbert series
which gives the number of linearly independent supersymmetric polynomials of
given degree is found. The series gives information about the minimal
supersymmetric polynomials which generate all the polynomials over the ring of
ordinary symmetric polynomials. Section \ref{mininorm} concerns certain
minimal polynomials whose norms are found explicitly as polynomials in
$\kappa$. The concluding section \ref{further} briefly discusses the
construction of anti-symmetric polynomials, and the application of
vector-valued Jack polynomials as wavefunctions of the Hamiltonian coming from
the Calogero-Sutherland quantum-mechanical model of identical particles on a
circle with $1/r^{2}$ interactions.

\section{Preliminaries\label{prelim}}

All the subsets appearing here are subsets of $\left\{  1,2,\ldots,N\right\}
$. The complement $E^{C}:=\left\{  j\notin E:1\leq j\leq N\right\}  .$ The
symbol $\#$ denotes the cardinality of a set. Let $\sigma\left(  n\right)
:=\left(  -1\right)  ^{n}$ for $n\in\mathbb{Z}$.

\begin{definition}
For a set $E$ and $1\leq j\leq N$ let%
\begin{align*}
\mathrm{inv}\left(  E\right)   &  =\#\left\{  \left(  i,j\right)  \in E\times
E^{C}:i<j\right\}  ,\\
\mathrm{inv}^{\prime}\left(  E\right)   &  =\#\left\{  \left(  i,j\right)
:i\in E,j\notin E,i<j<N\right\}  ,\\
s\left(  j,E\right)   &  =\#\left\{  i\in E:j<i\right\}  ,\\
E^{\bot}  &  =\left\{  j\in E^{C}:j<N\right\}  .
\end{align*}

\end{definition}

The symmetric group $\mathcal{S}_{N}$ is the group of permutations of
$\left\{  1,2,\ldots,N\right\}  $. For $i\neq j$ the transposition $\left(
i,j\right)  $ fixes $k\neq i,j$ and interchanges $i$ and $j$. The
transpositions $s_{i}:=\left(  i,i+1\right)  $ for $1\leq i<N$ generate
$\mathcal{S}_{N}$, and are fundamental in proofs by induction.

The fermionic variables $\theta_{1},\theta_{2},\ldots,\theta_{N}$ satisfy
$\theta_{i}\theta_{j}=-\theta_{j}\theta_{i}$ for all $i,j$. The symmetric
group $\mathcal{S}_{N}$ acts by permutation: suppose $p\left(  \theta\right)
$ is a polynomial in $\theta_{i}$ and $w\in\mathcal{S}_{N}$ then $wp\left(
\theta\right)  =p\left(  \theta_{w\left(  1\right)  },\theta_{w\left(
2\right)  },\ldots,\theta_{w\left(  N\right)  }\right)  $. As basis elements
for polynomials in $\left\{  \theta_{i}\right\}  $ we use
\[
\phi_{E}:=\theta_{i_{1}}\cdots\theta_{i_{m}},~E=\left\{  i_{1},i_{2}%
,\cdots,i_{m}\right\}  ,1\leq i_{1}<i_{2}<\cdots<i_{m}\leq N.
\]
Our first step is to analyze the representation of $\mathcal{S}_{N}$ acting on
(with $1\leq m\leq N-1$)%
\[
\mathcal{P}_{m}=\mathrm{span}\left\{  \phi_{E}:\#E=m\right\}  .
\]
The space is equipped with the inner product defined by declaring $\left\{
\phi_{E}\right\}  $ to be an orthonormal basis. Clearly $\dim\mathcal{P}%
_{m}=\binom{N}{m}$. We list key properties for the action of $\mathcal{S}_{N}$
on $\left\{  \phi_{E}\right\}  $.

\begin{proposition}
If $j\notin E$ then $\phi_{E}\theta_{j}=\sigma\left(  s\left(  j,E\right)
\right)  \phi_{E\cup\left\{  j\right\}  }$. Suppose $\left(  i,j\right)  $ is
a transposition then (1) $\left(  i,j\right)  \phi_{E}=\phi_{E}$ if $i,j\in
E^{C}$; (2) $\left(  i,j\right)  \phi_{E}=-\phi_{E}$ if $i,j\in E$ ; (3)
$\left(  i,j\right)  \phi_{E}=\sigma\left(  s\left(  i,E\right)  +s\left(
j,E\backslash\left\{  i\right\}  \right)  \right)  \phi_{\left(
E\backslash\left\{  i\right\}  \right)  \cup\left\{  j\right\}  }$ if $i\in
E,j\notin E$.
\end{proposition}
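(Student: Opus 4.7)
My plan is to prove the four assertions in sequence, establishing (1) first and then deducing (4) from (1) together with (2); claims (2) and (3) are direct consequences of the substitution definition of the $\mathcal{S}_{N}$-action. Throughout I use that any $w\in\mathcal{S}_{N}$ acts on polynomials as an algebra homomorphism, since the substitution $\theta_{k}\mapsto\theta_{w(k)}$ respects products.

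For (1), I write $\phi_E = \theta_{i_1}\cdots\theta_{i_m}$ with $i_1<\cdots<i_m$, append $\theta_j$ on the right, and move it leftward into its sorted position by successive anticommutations. Each pass of $\theta_j$ through some $\theta_{i_k}$ with $i_k>j$ produces a sign $-1$, and the number of such passes equals $s(j,E)$, yielding the factor $\sigma(s(j,E))$.

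For (2) there is essentially nothing to check: if neither $i$ nor $j$ lies in $E$, none of the factors $\theta_{i_k}$ is altered by the substitution $(i,j)$. For (3), with $\theta_i$ at position $a$ and $\theta_j$ at position $b>a$ inside $\phi_E$, I exchange these two factors in the anticommuting product; a standard count (one factor moves past $b-a$ variables, the other past $b-a-1$) produces a total sign of $(-1)^{2(b-a)-1}=-1$. The difficulty here is purely sign bookkeeping, not conceptual.

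The main step is (4), which I deduce from (1) and (2) rather than by directly resorting the substituted monomial. First, apply (1) to the set $E\setminus\{i\}$ with the element $i\notin E\setminus\{i\}$, noting $s(i,E\setminus\{i\})=s(i,E)$, to obtain $\phi_E = \sigma(s(i,E))\,\phi_{E\setminus\{i\}}\,\theta_i$. Since $i$ and $j$ both lie outside $E\setminus\{i\}$, (2) gives $(i,j)\phi_{E\setminus\{i\}}=\phi_{E\setminus\{i\}}$, while $(i,j)\theta_i=\theta_j$; multiplicativity of the action then yields $(i,j)\phi_E = \sigma(s(i,E))\,\phi_{E\setminus\{i\}}\,\theta_j$. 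A second application of (1), now with the set $E\setminus\{i\}$ and element $j$, converts $\phi_{E\setminus\{i\}}\theta_j$ into $\sigma(s(j,E\setminus\{i\}))\,\phi_{(E\setminus\{i\})\cup\{j\}}$. Combining the two sign factors produces exactly the asserted formula, and the temptation to resist is the more laborious route of sorting the substituted monomial by hand.
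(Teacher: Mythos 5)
Your proposal is correct and follows essentially the same route as the paper: the key case $i\in E$, $j\notin E$ is handled identically by factoring $\phi_{E}=\sigma\left(  s\left(  i,E\right)  \right)  \phi_{E\backslash\left\{  i\right\}  }\theta_{i}$, using that $(i,j)$ fixes $\phi_{E\backslash\left\{  i\right\}  }$ and sends $\theta_{i}$ to $\theta_{j}$, and then re-sorting with the insertion formula. The only cosmetic difference is in the case $i,j\in E$, where you count transposition signs directly while the paper extracts both $\theta_{i}$ and $\theta_{j}$ to the right before swapping them; both are valid and equivalent.
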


\begin{proof}
Suppose $\left\{  i,j\right\}  \subset E$ and $i<j$, then%
\begin{align*}
\phi_{E}  &  =\sigma\left(  s\left(  j,E\right)  \right)  \phi_{E\backslash
\left\{  j\right\}  }\theta_{j}=\sigma\left(  s\left(  j,E\right)  \right)
\sigma\left(  s\left(  i,E\backslash\left\{  i,j\right\}  \right)  \right)
\phi_{E\backslash\left\{  i,j\right\}  }\theta_{i}\theta_{j},\\
\left(  i,j\right)  \phi_{E}  &  =-\sigma\left(  s\left(  j,E\right)  \right)
\sigma\left(  s\left(  i,E\backslash\left\{  i,j\right\}  \right)  \right)
\phi_{E\backslash\left\{  i,j\right\}  }\theta_{i}\theta_{j}=-\phi_{E}.
\end{align*}
Suppose $i\in E$ and $j\notin E$, then
\begin{align*}
\left(  i,j\right)  \phi_{E}  &  =\left(  i,j\right)  \sigma\left(  s\left(
i,E\right)  \right)  \phi_{E\backslash\left\{  i\right\}  }\theta_{i}\\
&  =\sigma\left(  s\left(  i,E\right)  \right)  \phi_{E\backslash\left\{
i\right\}  }\theta_{j}=\sigma\left(  s\left(  i,E\right)  +s\left(
j,E\backslash\left\{  i\right\}  \right)  \right)  \phi_{\left(
E\backslash\left\{  i\right\}  \right)  \cup\left\{  j\right\}  }.
\end{align*}
Note $s\left(  j,E\backslash\left\{  i\right\}  \right)  =s\left(  j,E\right)
$ if $j>i$ and $s\left(  j,E\backslash\left\{  i\right\}  \right)  =s\left(
j,E\right)  -1$ if $j<i$.
\end{proof}

Denote $\left(  E\backslash\left\{  i\right\}  \right)  \cup\left\{
j\right\}  $ by $\left(  i,j\right)  E$ when $i\in E,j\notin E$. Suppose
$\#\left(  \left\{  i,i+1\right\}  \cap E\right)  =1$, then $s_{i}\phi
_{E}=\phi_{s_{i}E}$ and%
\begin{align*}
\left(  i,i+1\right)   &  \in E\times E^{C}:s_{i}E=\left(  E\backslash\left\{
i\right\}  \right)  \cup\left\{  i+1\right\}  ,\mathrm{inv}\left(
s_{i}E\right)  =\mathrm{inv}\left(  E\right)  -1,\\
\left(  i,i+1\right)   &  \in E^{C}\times E:s_{i}E=\left(  E\backslash\left\{
i+1\right\}  \right)  \cup\left\{  i\right\}  ,\mathrm{inv}\left(
s_{i}E\right)  =\mathrm{inv}\left(  E\right)  +1.
\end{align*}
We set up a duality map from $\mathcal{P}_{m}$ to $\mathcal{P}_{N-m}$.

\begin{definition}
\label{defdual}Suppose $\#E=m$ then define $\delta\phi_{E}:=\sigma\left(
\mathrm{inv}\left(  E\right)  \right)  \phi_{E^{C}}$. Extend $\delta$ to
$\mathcal{P}_{m}$ by linearity, so that $\delta$ is a linear isomorphism from
$\mathcal{P}_{m}$ to $\mathcal{P}_{N-m}$.
\end{definition}

\begin{proposition}
Suppose $1\leq i<N$ then $\delta s_{i}=-s_{i}\delta$.
\end{proposition}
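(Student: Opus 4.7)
The plan is to verify the identity $\delta s_i = -s_i \delta$ on the basis $\{\phi_E : \#E = m\}$, handling three cases according to $\#(\{i,i+1\} \cap E)$, and using the action formulas for $s_i$ given in the preceding proposition together with the rule for how $\mathrm{inv}$ transforms under $s_i$.

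First, I would observe the key set-theoretic fact $(s_i E)^C = s_i(E^C)$: if $\{i,i+1\} \subset E$ then both lie in $E$ and neither in $E^C$; if $\{i,i+1\} \subset E^C$ the reverse; and if exactly one of $i,i+1$ lies in $E$, then $s_i$ exchanges them inside $E$ and simultaneously exchanges them inside $E^C$. Thus the complement operation commutes with the action of $s_i$ on sets, which ensures that the only thing to track is the scalar.

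For \textbf{case (i)}, $i,i+1 \in E$: then $s_i \phi_E = -\phi_E$ by part (2) of the previous proposition, so $\delta s_i \phi_E = -\sigma(\mathrm{inv}(E))\phi_{E^C}$; on the other side $i,i+1 \in E^C$ fails (both are missing from $E^C$), so $s_i \phi_{E^C} = \phi_{E^C}$ by part (1), giving $-s_i \delta \phi_E = -\sigma(\mathrm{inv}(E))\phi_{E^C}$. \textbf{Case (ii)}, $i,i+1 \in E^C$, is the mirror image and works identically: now $s_i \phi_E = \phi_E$ while $s_i \phi_{E^C} = -\phi_{E^C}$. \textbf{Case (iii)}, $\#(\{i,i+1\}\cap E) = 1$: here $s_i \phi_E = \phi_{s_i E}$ and $\mathrm{inv}(s_i E) = \mathrm{inv}(E) \pm 1$ (with the sign depending on whether $i$ or $i+1$ is the one in $E$), so $\delta s_i \phi_E = \sigma(\mathrm{inv}(E) \pm 1)\phi_{(s_i E)^C} = -\sigma(\mathrm{inv}(E))\phi_{(s_i E)^C}$. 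Meanwhile $-s_i \delta \phi_E = -\sigma(\mathrm{inv}(E)) s_i \phi_{E^C} = -\sigma(\mathrm{inv}(E))\phi_{s_i(E^C)}$, and this agrees by the commutation $s_i(E^C) = (s_i E)^C$.

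The proof is thus essentially bookkeeping; the only real point to verify is the sign shift from $\mathrm{inv}$, and that is guaranteed by the two displayed transition formulas in the paragraph after the previous proposition. I do not anticipate any genuine obstacle beyond correctly pairing the two sides of each case; the most delicate place to keep signs straight is case (iii), where the $\pm 1$ change in $\mathrm{inv}$ must line up with the sign in $\sigma$ regardless of which of $i,i+1$ lies in $E$, but both sub-cases produce the same overall factor $-\sigma(\mathrm{inv}(E))$.
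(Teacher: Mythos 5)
Your proof is correct and follows essentially the same route as the paper: a case split on $\#(\{i,i+1\}\cap E)$, using $s_i\phi_E=\pm\phi_E$ in the two trivial cases and the facts $\mathrm{inv}(s_iE)=\mathrm{inv}(E)\pm1$ and $(s_iE)^C=s_i(E^C)$ in the remaining one. Your observation that the $\pm1$ shift in $\mathrm{inv}$ yields the factor $-\sigma(\mathrm{inv}(E))$ either way is exactly why the paper can dismiss the second sub-case of (iii) as "similar."
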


\begin{proof}
Suppose $i,i+1\in E$ then $s_{i}\phi_{E}=-\phi_{E}$ and $s_{i}\phi_{E^{C}%
}=\phi_{E^{C}}$ ; hence $\delta s_{i}\phi_{E}=-s_{i}\delta\phi_{E}$. Suppose
$i,i+1\in E^{C}$ then $s_{i}\phi_{E}=\phi_{E}$ and $s_{i}\phi_{E^{C}}%
=-\phi_{E^{C}}$. If $\left(  i,i+1\right)  \in E\times E^{C}$ then
$s_{i}E=\left(  E\backslash\left\{  i\right\}  \right)  \cup\left\{
i+1\right\}  $, $\mathrm{inv}\left(  s_{i}E\right)  =\mathrm{inv}\left(
E\right)  -1$ and $\left(  s_{i}E\right)  ^{C}=s_{i}E^{C}=\left(
E^{C}\backslash\left\{  i+1\right\}  \right)  \cup\left\{  i\right\}  $. Thus%
\begin{align*}
\delta s_{i}\phi_{E}  &  =\delta\phi_{s_{i}E}=\sigma\left(  \mathrm{inv}%
\left(  s_{i}E\right)  \right)  \phi_{\left(  s_{i}E\right)  ^{C}}%
=\sigma\left(  \mathrm{inv}\left(  s_{i}E\right)  \right)  \phi_{s_{i}E^{C}}\\
&  =\sigma\left(  \mathrm{inv}\left(  E\right)  -1\right)  \phi_{s_{i}E^{C}%
}=-s_{i}\delta\phi_{E}.
\end{align*}
A similar argument applies to the case $\left(  i,i+1\right)  \in E^{C}\times
E$.
\end{proof}

The length of $w\in\mathcal{S}_{N}$ is the number of factors of the shortest
product of $\left\{  s_{i}\right\}  $ required to express $w$ and equals
$\ell\left(  w\right)  :=\#\left\{  \left(  i,j\right)  :i<j,w\left(
i\right)  >w\left(  j\right)  \right\}  $.

\begin{corollary}
Suppose $w\in\mathcal{S}_{N}$ then $\delta w=\sigma\left(  \ell\left(
w\right)  \right)  w\delta$.
\end{corollary}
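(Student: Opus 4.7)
The plan is a direct induction on the length $\ell(w)$, using the preceding proposition $\delta s_i = -s_i \delta$ as the base case and the fact that $\mathcal{S}_N$ is generated by the simple reflections $\{s_i : 1 \le i < N\}$.

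First I would fix a reduced expression $w = s_{i_1} s_{i_2} \cdots s_{i_k}$ with $k = \ell(w)$. Applying the proposition once moves $\delta$ past $s_{i_1}$ at the cost of a sign:
\[
\delta w = \delta s_{i_1} s_{i_2} \cdots s_{i_k} = -s_{i_1} \delta s_{i_2} \cdots s_{i_k}.
\]
Iterating this $k$ times — each step picking up one factor of $-1$ — yields
\[
\delta w = (-1)^k s_{i_1} s_{i_2} \cdots s_{i_k} \delta = \sigma(\ell(w))\, w \delta,
\]
which is the desired identity.

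There is essentially no obstacle; the only point to check is well-definedness of the sign, i.e.\ that if one took a different (possibly non-reduced) expression $w = s_{j_1} \cdots s_{j_r}$ then $(-1)^r$ would still equal $\sigma(\ell(w))$. This is immediate from the standard fact that any two words in the simple reflections representing the same permutation have lengths of the same parity (the sign character of $\mathcal{S}_N$ is well defined). Alternatively, one can formalize the induction as a statement about the word length $k$ of an arbitrary factorization $w = s_{i_1} \cdots s_{i_k}$: the base case $k = 0$ gives $w = \mathrm{id}$ and $\delta = \delta$, and the inductive step is exactly the single application of $\delta s_i = -s_i \delta$ shown above. Either formulation yields the stated corollary.
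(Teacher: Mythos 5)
Your proof is correct and is exactly the argument the paper intends (the corollary is stated without proof, immediately following the proposition $\delta s_i = -s_i\delta$ and the definition of $\ell(w)$ via reduced words): write $w$ as a product of $\ell(w)$ simple reflections and anticommute $\delta$ past each factor, picking up $\sigma(\ell(w))$. Your remark on well-definedness via the sign character is a sensible extra check but not needed once a reduced expression is fixed.
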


The map $\delta$ can also be interpreted in the direction $\mathcal{P}%
_{N-m}\rightarrow\mathcal{P}_{m}$, and $\delta^{2}=\sigma\left(  m\left(
N-m\right)  \right)  $ since $\delta^{2}\phi_{E}=\sigma\left(  \mathrm{inv}%
\left(  E^{C}\right)  +\mathrm{inv}\left(  E\right)  \right)  \phi_{E}$ and
$\mathrm{inv}\left(  E^{C}\right)  +\mathrm{inv}\left(  E\right)  =\#\left(
E\times E^{C}\right)  $.

\section{Representation theory for hook tableaux\label{hookrep}}

The irreducible representation $\left(  N-m,1^{m}\right)  $ of $\mathcal{S}%
_{N}$ is realized on the span of reverse standard Young tableaux (RSYT) of
shape $\left(  N-m,1^{m}\right)  $. Each such tableau $T$ has the entries
$N,N-1,\ldots,1$ in decreasing order in row 1 and in column 1. For background
on representations of $\mathcal{S}_{N}$ see James and Kerber \cite{JK1991}. In
the examples and diagrams we will display column 1 as the second row, (to cut
down on blank space); suppose $N=9$ and $m=3$ then the entries of $T$ are
indexed as%
\[%
\begin{bmatrix}
T\left[  1,1\right]  & T\left[  1,2\right]  & T\left[  1,3\right]  & T\left[
1,4\right]  & T\left[  1,5\right]  & T\left[  1,6\right] \\
\circ & T\left[  2,1\right]  & T\left[  3,1\right]  & T\left[  4,1\right]  &
&
\end{bmatrix}
.
\]
Here is a typical tableau:%
\[%
\begin{bmatrix}
9 & 8 & 6 & 4 & 3 & 2\\
\circ & 7 & 5 & 1 &  &
\end{bmatrix}
,
\]
Notice that the entries in column 1 determine the tableau, so in this case
there are $\binom{8}{3}$ different tableaux. The \textit{content }of the entry
at $T\left[  i,j\right]  $ is defined to be $j-i$ (in the hook case the
content values are $-m,1-m,\ldots,0,1,\ldots,N-m-1$). The content vector of
$T$ is denoted $\left[  c\left(  i,T\right)  \right]  _{i=1}^{N}$ where
$c\left(  i,T\right)  $ is the content of the cell containing $i$. For the
above example the content vector is $\left[  -3,5,4,3,-2,2,-1,1,0\right]  $.

\begin{definition}
The Jucys-Murphy elements of $\mathcal{S}_{N}$ are the elements of the group
algebra $\mathbb{Q}\mathcal{S}_{N}$ defined by
\[
\omega_{i}:=\sum_{j=i+1}^{N}\left(  i,j\right)  ,1\leq i\leq N
\]
and they commute pairwise (note $\omega_{N}=0$).
\end{definition}

The Jucys-Murphy elements and the simple reflections satisfy the following
commutation relations:%
\begin{align*}
s_{i}\omega_{j}  &  =\omega_{j}s_{i},~j=1,\ldots,i-1,i+2,\ldots,N,\\
s_{i}\omega_{i}s_{i}  &  =\omega_{i+1}+s_{i},~1\leq i<N.
\end{align*}
The representation of $\mathcal{S}_{N}$ on the span of the RSYT's of a given
shape (partition of $N$) is defined in such a way that $\omega_{i}T=c\left(
i,T\right)  T$ for each $i$. The details of the action are given later.

\subsection{The submodule of isotype $\left(  N-m,1^{m}\right)  $.}

In this subsection we construct elements of $\mathcal{P}_{m}$ which correspond
to RSYT's of shape $\left(  N-m,1^{m}\right)  $ and have the appropriate
eigenvalues for $\left\{  \omega_{i}\right\}  $.

\begin{definition}
For $\#E=m+1$ define a polynomial in $\mathcal{P}_{m}$ by
\[
\psi_{E}=\sum_{j\in E}\sigma\left(  s\left(  j,E\right)  \right)
\phi_{E\backslash\left\{  j\right\}  }.
\]

\end{definition}

The following is the starting point for the construction, the steps of which
are ordered by $\mathrm{inv}\left(  E\right)  $.

\begin{definition}
Let $E_{0}=\left\{  N-m,N-m+1,\ldots,N\right\}  $.
\end{definition}

\begin{theorem}
\label{psi_eigval}If $1\leq i\leq N-m-1$ then $\omega_{i}\psi_{E_{0}}=\left(
N-m-i\right)  \phi_{E_{0}}$ and if $N-m\leq i\leq N$ then $\omega_{i}%
\psi_{E_{0}}=-\left(  N-i\right)  \psi_{E_{0}}$.
\end{theorem}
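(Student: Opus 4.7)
The plan is to compute $\omega_i \psi_{E_0} = \sum_{j=i+1}^N (i,j)\psi_{E_0}$ by expanding $\psi_{E_0}$ in its defining form and applying the preceding proposition term by term. I split into two cases matching the two ranges in the statement, according as $i \in E_0$ or $i \notin E_0$. (I read the first equation as asserting $\omega_i \psi_{E_0} = (N-m-i)\psi_{E_0}$, since the right-hand side must live in $\mathcal{P}_m$.)

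In the case $N-m \leq i \leq N$, every $j$ with $i < j \leq N$ lies in $E_0$, and I expand $(i,j)\psi_{E_0} = \sum_{l \in E_0} \sigma(s(l,E_0))(i,j)\phi_{E_0 \setminus \{l\}}$. For $l \neq i, j$ both $i, j \in E_0 \setminus \{l\}$, so case~(2) of the proposition gives $(i,j)\phi_{E_0 \setminus \{l\}} = -\phi_{E_0 \setminus \{l\}}$; summing these yields $-\psi_{E_0}$ plus the two missing $l=i, l=j$ summands. The $l=i$ and $l=j$ terms themselves are handled by case~(3) of the proposition and produce scalar multiples of $\phi_{E_0 \setminus \{j\}}$ and $\phi_{E_0 \setminus \{i\}}$; using $s(i, E_0) = N-i$, $s(j, E_0) = N-j$, and the identity $s(a, E \setminus \{b\}) = s(a, E) - [b > a]$ already exploited in the proof of the previous proposition, I will check that the net signs of these two exceptional terms exactly restore the missing $l=i, l=j$ contributions and give $(i,j)\psi_{E_0} = -\psi_{E_0}$. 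Summing over the $N-i$ values of $j$ yields the claimed eigenvalue $-(N-i)$.

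In the case $1 \leq i \leq N-m-1$, so that $i \notin E_0$, I split the sum over $j$ by whether $j \in E_0$. For $j \in \{i+1,\ldots,N-m-1\}$ neither $i$ nor $j$ lies in $E_0$, hence neither lies in any $E_0 \setminus \{l\} \subset E_0$, and case~(1) of the proposition gives $(i,j)\psi_{E_0} = \psi_{E_0}$; these account for $(N-m-1-i)\psi_{E_0}$. For $j \in E_0$, I split off the term with $l = j$ inside $\psi_{E_0}$: then $\phi_{E_0 \setminus \{j\}}$ is fixed by $(i,j)$ (case~(1) again), and summing these contributions over $j \in E_0$ reassembles one more copy of $\psi_{E_0}$. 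The surviving ``cross'' terms with $l \neq j$ produce multiples of $\phi_{(E_0 \setminus \{l, j\}) \cup \{i\}}$ via case~(3); for each unordered pair $\{l, j\} \subset E_0$ the two orderings contribute to the same basis element, and the identity $s(a, E \setminus \{b\}) = s(a, E) - [b > a]$ makes their total signs differ by $1$, so they cancel. Combining gives $\omega_i \psi_{E_0} = (N-m-1-i)\psi_{E_0} + \psi_{E_0} = (N-m-i)\psi_{E_0}$.

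The principal difficulty is sign bookkeeping: matching up the two exceptional summands in the case $i \in E_0$, and arranging the pairwise cancellation of cross terms in the case $i \notin E_0$. Both are controlled cleanly by the elementary identity $s(a, E \setminus \{b\}) = s(a, E) - [b > a]$, together with the fact that $E_0$ is a contiguous block so that $s(i, E_0)$ and $s(j, E_0)$ have the simple closed forms used above.
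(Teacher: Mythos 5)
Your proposal is correct and follows essentially the same route as the paper: the same split into the two ranges of $i$, and in the case $i\notin E_{0}$ the same decomposition of $\sum_{j\in E_{0}}(i,j)\psi_{E_{0}}$ into diagonal terms (which reassemble one copy of $\psi_{E_{0}}$) and cross terms that cancel in pairs over unordered pairs $\{l,j\}\subset E_{0}$. The only minor difference is in the case $N-m\leq i<N$, where the paper first proves $s_{k}\psi_{E_{0}}=-\psi_{E_{0}}$ for adjacent transpositions and then writes $(i,j)$ as a product of an odd number of such $s_{k}$, whereas you compute $(i,j)\psi_{E_{0}}=-\psi_{E_{0}}$ directly for each transposition by sign bookkeeping on the two exceptional summands $l=i,j$ --- both verifications go through, and you are right that the displayed $\phi_{E_{0}}$ in the statement is a typo for $\psi_{E_{0}}$.
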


\begin{proof}
Set $g=\psi_{E_{0}}$ and $g_{i}=\phi_{E_{0}\backslash\left\{  i\right\}  }$
for $N-m\leq i\leq N$; thus $g=\sum_{i=N-m}^{N}\left(  -1\right)  ^{N-i}g_{i}%
$. It is clear that $1\leq i<j<N-m$ implies $\left(  i,j\right)  g=g$. If
$N-m\leq i<N$ then $s_{i}g_{j}=-g_{j}$ when $N-m\leq j<i$ or $i+1<j\leq N$
while $s_{i}g_{i}=g_{i+1}$ and $s_{i}g_{i+1}=g_{i}$. Thus $s_{i}g=-g$ for all
$N-m\leq i<N$. It follows that $\left(  i,j\right)  g=-g$ for all $N-m\leq
i<j\leq N$ ($\left(  i,j\right)  $ is a product of an odd number of $s_{k}$,
with $N-m<k<N$). Thus if $N-m\leq i<N$ then $\omega_{i}g=\sum_{j=i+1}%
^{N}\left(  i,j\right)  g=-\left(  N-i\right)  g$. It remains to consider
$\sum_{j=N-m}^{N}\left(  i,j\right)  g$ for $1\leq i<N-m$. For $N-m\leq
j<k\leq N$ let $g_{ijk}=\phi_{\left(  E_{0}\backslash\left\{  j,k\right\}
\right)  \cup\left[  i\right]  }$. Then $\left(  i,j\right)  g_{k}=\left(
-1\right)  ^{j-N+m}g_{ijk}$ and $\left(  i,k\right)  g_{j}=\left(  -1\right)
^{k-N+m+1}g_{ijk}$, and%
\begin{gather*}
\sum_{j=N-m}^{N}\left(  i,j\right)  g=\sum_{j=N-m}^{N}\sum_{k=N-m}^{N}\left(
-1\right)  ^{N-k}\left(  i,j\right)  g_{k}\\
=\sum_{j=N-m}^{N}\left(  -1\right)  ^{N-j}\left(  i,j\right)  g_{j}%
+\sum_{N-m\leq j<k\leq N}\left\{  \left(  -1\right)  ^{N-k}\left(  i,j\right)
g_{k}+\left(  -1\right)  ^{N-j}\left(  i,k\right)  g_{j}\right\} \\
=\sum_{j=N-m}^{N}\left(  -1\right)  ^{N-j}g_{j}=g,
\end{gather*}
because $\left(  i,j\right)  p_{j}=p_{j}$ and%
\[
\left(  -1\right)  ^{N-k}\left(  i,j\right)  g_{k}+\left(  -1\right)
^{N-j}\left(  i,k\right)  g_{j}=\left(  -1\right)  ^{m}\left\{  \left(
-1\right)  ^{j-k}+\left(  -1\right)  ^{k+1-j}\right\}  g_{ijk}=0.
\]
Finally, if $1\leq i<N-m$ then%
\[
\omega_{i}g=\sum_{j=i+1}^{N-m-1}\left(  i,j\right)  g+\sum_{j=N-m}^{N}\left(
i,j\right)  g=\left\{  \left(  N-m-1-i\right)  +1\right\}  g.
\]
This completes the proof.
\end{proof}

\begin{corollary}
The respective $\left\{  \omega_{i}\right\}  $-eigenvalues of $\psi_{E_{0}}$
agree with the content vector of the tableau $T_{0}$ of shape $\left(
N-m,1^{m}\right)  $ given by $T_{0}\left[  i,1\right]  =N+1-i$ for $1\leq
i\leq m+1$ and $T_{0}\left[  1,j\right]  =N-m+1-j$ for $2\leq j\leq N-m$ . The
polynomial $\psi_{E_{0}}$ is of isotype $\left(  N-m,1^{m}\right)  $ and this
representation is a summand of $\mathcal{P}_{m}$.
\end{corollary}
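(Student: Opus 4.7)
The plan is to compare the list of $\omega_i$-eigenvalues of $\psi_{E_0}$ produced by Theorem \ref{psi_eigval} entry by entry against the content vector of $T_0$, and then to invoke the spectral theory of the Jucys--Murphy family on $\mathcal{S}_N$-modules to pin down the isotype.

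The first step is simply to read the content vector off $T_0$. For $1\le i\le m+1$ the entry $N+1-i$ sits in cell $(i,1)$, so the entry $k$ with $N-m\le k\le N$ lies at $(N+1-k,1)$ and has content $j-i=1-(N+1-k)=-(N-k)$. For $2\le j\le N-m$ the entry $N-m+1-j$ sits in cell $(1,j)$, so the entry $k$ with $1\le k\le N-m-1$ lies at $(1,N-m+1-k)$ and has content $(N-m+1-k)-1=N-m-k$. Substituting $k=i$ into these two formulas reproduces exactly the eigenvalues $N-m-i$ and $-(N-i)$ supplied by Theorem \ref{psi_eigval}, so $\omega_i\psi_{E_0}=c(i,T_0)\psi_{E_0}$ for every $1\le i\le N$.

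Next, I appeal to the Young--Jucys--Murphy spectral theorem: the commuting family $\{\omega_i\}$ acts semisimply on any finite-dimensional $\mathbb{Q}\mathcal{S}_N$-module, and on the irreducible labelled by $\lambda\vdash N$ the joint eigenspaces are one-dimensional, indexed by the RSYT of shape $\lambda$ with joint eigenvalue equal to the content vector. The polynomial $\psi_{E_0}$ is nonzero -- its coefficients on the $m+1$ distinct basis elements $\phi_{E_0\setminus\{j\}}$ are $\pm 1$, so no cancellation occurs -- and its joint eigenvalue list is a content vector that appears only in shape $(N-m,1^m)$. Hence the cyclic submodule $\mathbb{Q}\mathcal{S}_N\cdot\psi_{E_0}\subset\mathcal{P}_m$ must be a copy of the irreducible of isotype $(N-m,1^m)$, and by the semisimplicity of $\mathbb{Q}\mathcal{S}_N$ this irreducible appears as a summand of $\mathcal{P}_m$.

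The main obstacle is really only the bookkeeping in the first step: column~$1$ of the hook is drawn as the second printed row in the author's convention, so it is easy to mis-identify the cell of $k$ by one, and a single off-by-one slip in $i$ or $j$ would break the match with Theorem \ref{psi_eigval}. Once the content-vector comparison is carried out cleanly, the isotype assertion reduces to quoting the standard Jucys--Murphy spectral theorem as found in James--Kerber \cite{JK1991}.
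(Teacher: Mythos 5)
Your proof is correct and follows exactly the route the paper intends: the corollary is stated without proof precisely because it amounts to the entry-by-entry match of the eigenvalues from Theorem \ref{psi_eigval} with the content vector of $T_0$, followed by the standard Jucys--Murphy spectral fact that a nonzero joint eigenvector whose eigenvalue vector is the content vector of an RSYT of shape $\lambda$ generates a copy of the irreducible of isotype $\lambda$. Your index bookkeeping ($k=N+1-i$ giving content $-(N-k)$ in column 1, and $k=N-m+1-j$ giving content $N-m-k$ in row 1) and the observation that $\psi_{E_0}\neq 0$ are both accurate, so nothing is missing.
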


Next we show that $\mathrm{span}\left\{  \psi_{E}:\#E=m+1\right\}  $ is closed
under $\mathcal{S}_{N}$ and thus is an irreducible module of isotype $\left(
N-m,1^{m}\right)  $.

\begin{proposition}
\label{psi_span}Suppose $\#E=m+1$ and $1\leq i<N:$ (1) if $i,i+1\notin E$ then
$s_{i}\psi_{E}=\psi_{E}$; (2) if $i,i+1\in E$ then $s_{i}\psi_{E}=-\psi_{E}$;
(3) if $\#\left(  \left\{  i,i+1\right\}  \cap E\right)  =1$ then $s_{i}%
\psi_{E}=\psi_{s_{i}E}.$
\end{proposition}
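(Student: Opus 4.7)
The key idea is to recognise $\psi_{E}$ as the image of $\phi_{E}$ under an $\mathcal{S}_{N}$-equivariant operator. Writing $E=\{i_{1}<\cdots<i_{m+1}\}$, one has $s(i_{k},E)=m+1-k$, so
\[
\psi_{E}=\sum_{k=1}^{m+1}(-1)^{m+1-k}\phi_{E\setminus\{i_{k}\}}=(-1)^{m}D\phi_{E},
\]
where $D=\sum_{j=1}^{N}\partial_{\theta_{j}}$ is the sum of the left fermionic derivatives; equivalently, $D$ is the linear operator on $\bigoplus_{m}\mathcal{P}_{m}$ sending $\theta_{k_{1}}\cdots\theta_{k_{r}}$ (with $k_{1}<\cdots<k_{r}$) to $\sum_{\ell=1}^{r}(-1)^{\ell-1}\theta_{k_{1}}\cdots\widehat{\theta_{k_{\ell}}}\cdots\theta_{k_{r}}$. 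Because $D$ treats all indices symmetrically ($w\partial_{j}w^{-1}=\partial_{w(j)}$, so summing over $j$ yields $wD=Dw$), it commutes with every simple reflection, and hence $s_{i}\psi_{E}=(-1)^{m}D(s_{i}\phi_{E})$.

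The three cases then follow immediately from the three corresponding cases of the preceding proposition applied to $\phi_{E}$. In case (1), $s_{i}\phi_{E}=\phi_{E}$ gives $s_{i}\psi_{E}=\psi_{E}$. In case (2), $s_{i}\phi_{E}=-\phi_{E}$ gives $s_{i}\psi_{E}=-\psi_{E}$. For case (3), first suppose $i\in E$ and $i+1\notin E$; the subcase $i\notin E,\ i+1\in E$ will follow by applying $s_{i}$ to both sides of the identity already established for the set $s_{i}E$. By case (3) of the preceding proposition,
\[
s_{i}\phi_{E}=\sigma\bigl(s(i,E)+s(i+1,E\setminus\{i\})\bigr)\,\phi_{s_{i}E}.
\]
Since $i+1\notin E$, removing $i$ alters no element greater than $i+1$, so $s(i+1,E\setminus\{i\})=s(i+1,E)$; and since $i+1\notin E$, the elements of $E$ strictly greater than $i$ are exactly those strictly greater than $i+1$, giving $s(i,E)=s(i+1,E)$. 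The exponent is therefore even, the sign is $+1$, and $s_{i}\psi_{E}=(-1)^{m}D\phi_{s_{i}E}=\psi_{s_{i}E}$.

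The only substantive obstacle is spotting the ``total derivative'' structure behind $\psi_{E}$; once it is in hand, the proof is essentially bookkeeping. A purely direct verification is also possible by splitting the defining sum of $\psi_{E}$ into the contributions $j=i$, $j=i+1$, and other $j$, but then case (2) requires one to verify that the mixed $j=i,i+1$ contributions combine correctly, which relies on the parity identity $s(i,E)=s(i+1,E)+1$ valid when $i,i+1\in E$.
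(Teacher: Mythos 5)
Your proof is correct, but it takes a genuinely different route from the paper. The paper proves the proposition by brute force: it splits the defining sum $\psi_{E}=\sum_{j\in E}\sigma(s(j,E))\phi_{E\setminus\{j\}}$ into the contributions $j=i$, $j=i+1$, and the remaining $j$, and verifies each case using the sign identities $s(i,E)=s(i+1,E)+1$ (when $i,i+1\in E$) and $s(j,s_{i}E)=s(j,E)$, $s(i+1,s_{i}E)=s(i,E)$ (in the mixed case) --- exactly the ``purely direct verification'' you mention at the end. Your identification $\psi_{E}=(-1)^{m}D\phi_{E}$ with $D=\sum_{j}\partial\theta_{j}$ is both correct (the coefficient check $(-1)^{s(i_{k},E)}=(-1)^{m}(-1)^{k-1}$ works out) and more conceptual: it reduces all three cases to the already-established transformation rules for $\phi_{E}$ together with the single equivariance fact $wD=Dw$, and it yields the stronger statement $w\psi_{E}=(-1)^{m}D(w\phi_{E})$ for arbitrary $w\in\mathcal{S}_{N}$ at no extra cost. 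It also anticipates structure the paper only introduces later, in the Projections subsection, where $D$ appears and $\ker D\cap\mathcal{P}_{m}=\mathcal{P}_{m,0}$ is recorded --- your observation is essentially the reason that kernel is the span of the $\psi_{E}$. The one step you assert rather than fully prove is $w\partial\theta_{j}w^{-1}=\partial\theta_{w(j)}$; this is routine for the left fermionic derivative and consistent with the paper's later use of $D$, so it is a legitimate (if compressed) step, but in a self-contained write-up you should include the two-line verification. Your handling of the sign in case (3) and the reduction of the subcase $i\notin E$, $i+1\in E$ via $s_{i}^{2}=1$ both match what the paper does.
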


\begin{proof}
If $i,i+1\notin E$ then $s_{i}\phi_{E\backslash\left\{  j\right\}  }%
=\phi_{E\backslash\left\{  j\right\}  }$ for each $j\in E$. If $i,i+1\in E$
then $s\left(  i,E\right)  =s\left(  i+1,E\right)  +1$ and%
\begin{align*}
s_{i}\psi_{E}  &  =\sum_{j\in E,j\neq i,i+1}\sigma\left(  s\left(  j,E\right)
\right)  s_{i}\phi_{E\backslash\left\{  j\right\}  }+\sigma\left(  s\left(
i,E\right)  \right)  s_{i}\phi_{E\backslash\left\{  i\right\}  }+\sigma\left(
s\left(  i+1,E\right)  \right)  s_{i}\phi_{E\backslash\left\{  i+1\right\}
}\\
&  =-\sum_{j\in E,j\neq i,i+1}\sigma\left(  s\left(  j,E\right)  \right)
\phi_{E\backslash\left\{  j\right\}  }+\sigma\left(  s\left(  i,E\right)
\right)  \phi_{E\backslash\left\{  i+1\right\}  }+\sigma\left(  s\left(
i+1,E\right)  \right)  \phi_{E\backslash\left\{  i\right\}  }\\
&  =-\psi_{E}%
\end{align*}
because $\sigma\left(  s\left(  i,E\right)  \right)  =-\sigma\left(  s\left(
i+1,E\right)  \right)  $. Suppose $\left(  i,i+1\right)  \in E\times E^{C}$
then $s_{i}E=\left(  E\backslash\left\{  i\right\}  \right)  \cup\left\{
i+1\right\}  ,$ $s\left(  j,s_{i}E\right)  =s\left(  j,E\right)  $ for all
$j\neq i,i+1$, and $s\left(  i+1,s_{i}E\right)  =s\left(  i,E\right)  $; thus
\begin{align*}
s_{i}\psi_{E}  &  =\sum_{j\in E,j\neq i,}\sigma\left(  s\left(  j,E\right)
\right)  s_{i}\phi_{E\backslash\left\{  j\right\}  }+\sigma\left(  s\left(
i,E\right)  \right)  s_{i}\phi_{E\backslash\left\{  i\right\}  }\\
&  =\sum_{j\in E,j\neq i,}\sigma\left(  s\left(  j,s_{i}E\right)  \right)
\phi_{s_{i}E\backslash\left\{  j\right\}  }+\sigma\left(  s\left(  i,E\right)
\right)  \phi_{E\backslash\left\{  i\right\}  }\\
&  =\psi_{s_{i}E}%
\end{align*}
because $\left(  s_{i}E\right)  \backslash\left\{  i+1\right\}  =E\backslash
\left\{  i\right\}  $. If $\left(  i,i+1\right)  \in E^{C}\times E$ then use
the fact that $s_{i}\left(  s_{i}E\right)  =E$ and $s_{i}^{2}=1$.
\end{proof}

\begin{definition}
Let $\mathcal{P}_{m,0}=\mathrm{span}\left\{  \psi_{E}:\#E=m+1\right\}  $. Let
$\mathcal{E}_{0}=\left\{  E:\#E=m+1,N\in E\right\}  .$
\end{definition}

\begin{theorem}
The subspace $\mathcal{P}_{m,0}$ is an irreducible $\mathcal{S}_{N}$-module
isomorphic to the module $\left(  N-m,1^{m}\right)  $, the span of RSYT's with
this shape.
\end{theorem}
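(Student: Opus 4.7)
The plan is to show $\mathcal{P}_{m,0}=\mathbb{Q}\mathcal{S}_{N}\cdot\psi_{E_{0}}$ and then identify this cyclic module with the irreducible representation $(N-m,1^{m})$ via Young--Jucys--Murphy theory. The first containment $\mathbb{Q}\mathcal{S}_{N}\cdot\psi_{E_{0}}\subseteq\mathcal{P}_{m,0}$ is immediate from Proposition \ref{psi_span}, which shows every $s_{i}\psi_{E}$ lies in $\{\pm\psi_{E},\psi_{s_{i}E}\}$ and hence that $\mathcal{P}_{m,0}$ is $\mathcal{S}_{N}$-stable.

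The reverse inclusion $\mathcal{P}_{m,0}\subseteq M:=\mathbb{Q}\mathcal{S}_{N}\cdot\psi_{E_{0}}$ I would establish by induction on $\mathrm{inv}(E)$. The base case is $\mathrm{inv}(E)=0$, which forces every element of $E$ to exceed every element of $E^{C}$, so $E=\{N-m,\ldots,N\}=E_{0}$. For the inductive step, given $E\neq E_{0}$, I would produce an index $i$ with $i\in E$ and $i+1\in E^{C}$: otherwise $E$ would be closed under $j\mapsto j+1$ and would coincide with $E_{0}$. The inv-transformation formula recorded at the close of Section \ref{prelim} gives $\mathrm{inv}(s_{i}E)=\mathrm{inv}(E)-1$, and Proposition \ref{psi_span}(3) applied to the set $s_{i}E$ (which still satisfies $\#(\{i,i+1\}\cap s_{i}E)=1$) yields $\psi_{E}=s_{i}\psi_{s_{i}E}$. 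By induction $\psi_{s_{i}E}\in M$, hence $\psi_{E}\in M$.

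Finally, the corollary to Theorem \ref{psi_eigval} exhibits $\psi_{E_{0}}$ as a simultaneous eigenvector of $\omega_{1},\ldots,\omega_{N}$ whose eigenvalue tuple is precisely the content vector of the tableau $T_{0}$ of shape $(N-m,1^{m})$. Since distinct RSYT's produce distinct content vectors, any such joint eigenvector must lie inside the $(N-m,1^{m})$-isotypic component of its ambient $\mathcal{S}_{N}$-module, and writing $\psi_{E_{0}}$ against the Gelfand--Tsetlin basis of that isotypic component shows that the cyclic submodule it generates is an irreducible copy of $(N-m,1^{m})$. Combined with $\mathcal{P}_{m,0}=M$, this yields the desired isomorphism. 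The one step requiring care is the clean invocation of the Gelfand--Tsetlin principle in the last paragraph; the combinatorial inv-induction is then routine bookkeeping driven entirely by the explicit transposition action supplied by Proposition \ref{psi_span}.
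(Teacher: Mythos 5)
Your proposal is correct and follows essentially the same route as the paper: Proposition \ref{psi_span} gives $\mathcal{S}_{N}$-stability and (via the case (3) transformation, which you make explicit through the induction on $\mathrm{inv}(E)$) shows $\mathcal{P}_{m,0}$ is the cyclic module generated by $\psi_{E_{0}}$, while Theorem \ref{psi_eigval} and its corollary identify the isotype through the Jucys--Murphy spectrum. The paper's proof is just a terse citation of those two results; you have supplied the same argument with the bookkeeping written out.
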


\begin{proof}
By Theorem \ref{psi_eigval} and Proposition \ref{psi_span} $\mathcal{P}_{m,0}$
is the $\mathcal{S}_{N}$-module generated by $\psi_{E_{0}}$, which is of
isotype $\left(  N-m,1^{m}\right)  $.
\end{proof}

We set up a correspondence between sets and hook-tableaux which illuminates
the meaning of content vectors and $\left\{  \omega_{i}\right\}  $-eigenvalues.

\begin{definition}
\label{defYE0}Suppose $E\in\mathcal{E}_{0}$ and $E=\left\{  i_{1},\ldots
,i_{m},i_{m+1}\right\}  $, $E^{C}=\left\{  j_{1},\ldots,j_{N-m-1}\right\}  $
with $i_{1}<i_{2}<\cdots<i_{m+1}=N$ and $j_{1}<j_{2}<\cdots$ then $Y_{E}$ is
the RSYT of shape $\left(  N-m,1^{m}\right)  $ given by $Y_{E}\left[
k,1\right]  =i_{m+2-k}$ for $1\leq k\leq m+1$, and $Y_{E}\left[  1,k\right]
=j_{N-m+1-k}$ for $2\leq k\leq N-m$.
\end{definition}

Example: let $N=8,m=3,E=\left\{  2,5,7,8\right\}  $ then%
\[
Y_{E}=%
\begin{bmatrix}
8 & 6 & 4 & 3 & 1\\
\circ & 7 & 5 & 2 &
\end{bmatrix}
.
\]

We will construct $T_{E}\in\mathcal{P}_{m,0}$ satisfying $\omega_{i}%
T_{E}=c\left(  i,Y_{E}\right)  T_{E}$ for each $i$ and $E\in\mathcal{E}_{0}.$
Here is an explicit definition of the content vector satisfying $\left[
c\left(  i,E\right)  \right]  _{i=1}^{N}=\left[  c\left(  i,Y_{E}\right)
\right]  _{i=1}^{N}$

\begin{definition}
\label{contdef}For $E\in\mathcal{E}_{0}$ the content vector is $\left[
c\left(  i,E\right)  \right]  _{i=1}^{N}$ where $c\left(  i,E\right)
=-s\left(  i,E\right)  $ if $i\in E$ and $c\left(  i,E\right)  =s\left(
i,E^{C}\right)  +1$ if $i\notin E$.
\end{definition}

\begin{theorem}
\label{TEexist}For each $E\in\mathcal{E}_{0}$ there exists $T_{E}%
\in\mathcal{P}_{m,0}$ such that $\omega_{i}T_{E}=c\left(  i,E\right)  T_{E}$.
If $i\notin E$ and $i+1\in E\backslash\left\{  N\right\}  $ then $T_{s_{i}%
E}=s_{i}T_{E}-\left(  c\left(  i,E\right)  -c\left(  i+1,E\right)  \right)
^{-1}T_{E}$.
\end{theorem}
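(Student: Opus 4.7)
The plan is to induct on $\mathrm{inv}(E)$. The base case is $E = E_0$ (with $\mathrm{inv}(E_0) = 0$): set $T_{E_0} := \psi_{E_0}$, whose $\omega_i$-eigenvalues from Theorem~\ref{psi_eigval} coincide with $c(i, E_0)$ computed from Definition~\ref{contdef}. For the inductive step, given $E' \in \mathcal{E}_0$ with $\mathrm{inv}(E') > 0$, a short argument (using $N \in E'$ and $E' \ne E_0$) yields $j \in E' \setminus \{N\}$ with $j+1 \notin E'$; setting $E := (E' \setminus \{j\}) \cup \{j+1\}$ gives $j \notin E$, $j+1 \in E \setminus \{N\}$, $E' = s_j E$, and $\mathrm{inv}(E) = \mathrm{inv}(E') - 1$, so $T_E$ is already available and the stated recursion defines $T_{E'} = T_{s_j E}$.

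The core verification is that $T_{s_i E} := s_i T_E - \alpha T_E$, with $\alpha := (c(i, E) - c(i+1, E))^{-1}$, satisfies the eigenvalue condition. The scalar $\alpha$ is well-defined because $i \notin E$ and $i+1 \in E$ force $c(i, E) \ge 1$ and $c(i+1, E) \le 0$ from Definition~\ref{contdef}. For $j \ne i, i+1$, the identity $c(j, s_i E) = c(j, E)$ is immediate, and commutativity of $\omega_j$ with $s_i$ finishes it. The main input for $j \in \{i, i+1\}$ is the content swap $c(i, s_i E) = c(i+1, E)$ and $c(i+1, s_i E) = c(i, E)$, each verified by a short manipulation of $s(\cdot, \cdot)$ using $s_i E = (E \setminus \{i+1\}) \cup \{i\}$. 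The key algebraic step is the Jucys--Murphy identity $s_i \omega_i s_i = \omega_{i+1} + s_i$, which rearranges to $\omega_i s_i = s_i \omega_{i+1} + 1$ and $\omega_{i+1} s_i = s_i \omega_i - 1$; substituting these into the definition of $T_{s_i E}$, both desired eigenvalue equations collapse to the single tautology $\alpha(c(i, E) - c(i+1, E)) = 1$.

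The main obstacle is consistency of the recursion: the descent index $j$ at each inductive step need not be unique, so one must check that the resulting $T_{E'}$ is independent of the chosen reduction path to $E_0$. I would resolve this by invoking that $\mathcal{P}_{m,0}$ is the irreducible $\mathcal{S}_N$-module of isotype $(N-m, 1^m)$, whence distinct RSYT yield distinct content vectors and the joint $\{\omega_i\}$-eigenspaces are one-dimensional. Thus $T_{E'}$ is determined up to scalar by its eigenvalue data, and the scalar is pinned down by any particular descent path back to $E_0$.
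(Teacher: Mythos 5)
Your proposal follows the paper's proof in all essentials: induction on $\mathrm{inv}(E)$ starting from $T_{E_{0}}=\psi_{E_{0}}$ (Theorem \ref{psi_eigval}), the inductive step via the rearranged Jucys--Murphy relations $\omega_{i}s_{i}=s_{i}\omega_{i+1}+1$ and $\omega_{i+1}s_{i}=s_{i}\omega_{i}-1$, and the content swap $c(i,s_{i}E)=c(i+1,E)$, $c(i+1,s_{i}E)=c(i,E)$. Your description of the descent step is, if anything, stated more carefully than the paper's (which transposes the roles of $i\in F$ and $i\notin F$ in selecting the index), and your observation that $c(i,E)\geq1$ while $c(i+1,E)\leq0$ correctly justifies that the denominator is nonzero.

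The one soft spot is the final sentence on consistency. One-dimensionality of the joint $\{\omega_{i}\}$-eigenspaces in the irreducible module gives you that two descent paths produce \emph{proportional} vectors, but saying ``the scalar is pinned down by any particular descent path'' does not show the proportionality constant is $1$; without that, the recursion $T_{s_{i}E}=s_{i}T_{E}-\bigl(c(i,E)-c(i+1,E)\bigr)^{-1}T_{E}$ is only established for the particular reductions you happened to choose, not for every admissible pair $(E,i)$ as the statement asserts. The paper closes exactly this gap immediately after the theorem with the triangularity property $T_{E}-\psi_{E}\in\mathcal{P}_{m,0}^{(n-1)}$, which forces the coefficient of $\psi_{E}$ in any path-construction of $T_{E}$ to equal $1$ and hence makes the normalization path-independent. (A related small point, unaddressed in both your write-up and the paper's proof proper: one should rule out $s_{i}T_{E}-\alpha T_{E}=0$; this follows from $(s_{i}+\alpha)(s_{i}-\alpha)=1-\alpha^{2}\neq0$, since $i+1\in E\setminus\{N\}$ and $N\in E$ give $c(i,E)-c(i+1,E)\geq2$, so $|\alpha|\leq\tfrac{1}{2}$.)
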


\begin{proof}
We proceed by induction on $\mathrm{inv}\left(  E\right)  $ and start with
$E=E_{0}$ with $\mathrm{inv}\left(  E_{0}\right)  =0$ and $T_{E_{0}}%
=\psi_{E_{0}}$; this is valid by Theorem \ref{psi_eigval}. Suppose $T_{E}$ has
been constructed for every $E\in\mathcal{E}_{0}$ with $\mathrm{inv}\left(
E\right)  \leq n$ and $F\in\mathcal{E}_{0}$ satisfies $\mathrm{inv}\left(
F\right)  =n+1.$Then there exists $i<N-1$ such that $i\notin F$ and $i+1\in
F$, so that $\mathrm{inv}\left(  s_{i}F\right)  =\mathrm{inv}\left(  F\right)
-1=n$. Set $E=s_{i}F$ so that $F=s_{I}E$. Let $b=\left(  c\left(  i,E\right)
-c\left(  i+1,E\right)  \right)  ^{-1}$. and define $T_{F}:=s_{i}T_{E}-bT_{E}%
$. If $j<i$ or $j>i+1$ then $\omega_{j}s_{i}=s_{i}\omega_{j}$ and so
$\omega_{j}T_{s_{i}E}=c\left(  j,E\right)  T_{s_{i}E}$, also $c\left(
j,E\right)  =c\left(  j,F\right)  $. Then%
\begin{align*}
\omega_{i}T_{F}  &  =\omega_{i}\left(  s_{i}T_{E}-bT_{E}\right)  =\left(
1+s_{i}\omega_{i+1}\right)  T_{E}-\omega_{i}bT_{E}\\
&  =T_{E}+c\left(  i+1,E\right)  s_{i}T_{E}-c\left(  i,E\right)  bT_{E}\\
&  =c\left(  i+1,E\right)  \left(  s_{i}T_{E}-bT_{E}\right)  .
\end{align*}
Similarly $\omega_{i+1}T_{F}=c\left(  i,E\right)  T_{F}$. Since $c\left(
i,E\right)  =c\left(  i+1,F\right)  $ and $c\left(  i+1,E\right)  =c\left(
i,F\right)  $ this completes the proof.
\end{proof}

It may seem that there is a uniqueness problem in the construction of
$T_{s_{i}E}$ (possibly $s_{i}E=s_{j}F$) but the eigenvalues of $\left\{
\omega_{i}\right\}  $ do determine $T_{E}$ up to a multiplicative constant.
There is a useful triangularity property for $\left\{  \psi_{E}\right\}  $
which completes the uniqueness proof.

\begin{definition}
For $0\leq n\leq m\left(  N-1-m\right)  $ let $\mathcal{P}_{m,0}^{\left(
n\right)  }=\mathrm{span}\left\{  \psi_{E}:E\in\mathcal{E}_{0},\mathrm{inv}%
\left(  E\right)  \leq n\right\}  $.
\end{definition}

The extreme cases are $\mathrm{inv}\left(  \left\{  N-m,\ldots,N\right\}
\right)  =0$ and $\mathrm{inv}\left(  \left\{  1,2,\ldots,m,N\right\}
\right)  =m\left(  N-1-m\right)  $. There is an important relation to Gaussian
binomial coefficients:%
\[
\sum_{E\in\mathcal{E}_{0}}q^{\mathrm{inv}\left(  E\right)  }=%
%TCIMACRO{\QATOPD{[}{]}{N-1}{m}}%
%BeginExpansion
\genfrac{[}{]}{0pt}{}{N-1}{m}%
%EndExpansion
_{q}=\frac{\left(  q;q\right)  _{N-1}}{\left(  q;q\right)  _{m}\left(
q;q\right)  _{N-1-m}},
\]
where $\left(  a;q\right)  _{n}:=\prod\limits_{i=0}^{n-1}\left(
1-aq^{i}\right)  $. There are representation-theoretic meanings of this series
which will be discussed later (Section \ref{Pseries}). For example if
$N=6,m=2$ then $%
%TCIMACRO{\QATOPD{[}{]}{5}{2}}%
%BeginExpansion
\genfrac{[}{]}{0pt}{}{5}{2}%
%EndExpansion
_{q}=1+q+2q^{2}+2q^{3}+2q^{4}+q^{5}+q^{6}$.

\begin{proposition}
Suppose $E\in\mathcal{E}_{0}$ and $\mathrm{inv}\left(  E\right)  =n\geq1$ then
$T_{E}-\psi_{E}\in\mathcal{P}_{m,0}^{\left(  n-1\right)  }$.
\end{proposition}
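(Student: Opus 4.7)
The plan is to prove the proposition by induction on $n = \mathrm{inv}(E)$. The base case $n = 0$ is immediate since then $E = E_0$ and $T_{E_0} = \psi_{E_0}$ by construction, so the difference is zero. For the inductive step, I would take $F \in \mathcal{E}_0$ with $\mathrm{inv}(F) = n+1 \geq 1$. Since $F \neq E_0$, the sorted list $i_1 < \cdots < i_{m+1} = N$ of $F$ must have a gap: some $k \leq m$ with $i_{k+1} > i_k + 1$. Setting $i := i_k$ gives $i \in F$, $i+1 \notin F$, and $i \leq i_{k+1} - 2 \leq N-2$, so $E := s_i F$ satisfies $i \notin E$, $i+1 \in E \setminus \{N\}$, $N \in E$, and $\mathrm{inv}(E) = n$. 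The recurrence in Theorem \ref{TEexist} then yields
\[
T_F \;=\; T_{s_i E} \;=\; s_i T_E - b\, T_E, \qquad b = \bigl(c(i,E) - c(i+1,E)\bigr)^{-1}.
\]

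Next I would substitute the inductive hypothesis $T_E = \psi_E + q$ with $q \in \mathcal{P}_{m,0}^{(n-1)}$. Since $\#(\{i,i+1\} \cap E) = 1$, Proposition \ref{psi_span} gives $s_i \psi_E = \psi_{s_i E} = \psi_F$, so
\[
T_F - \psi_F \;=\; s_i q - b\, \psi_E - b\, q.
\]
The terms $b\, \psi_E$ and $b\, q$ lie in $\mathcal{P}_{m,0}^{(n)}$ by definition of the filtration (using $\mathrm{inv}(E) = n$ and $\mathcal{P}_{m,0}^{(n-1)} \subseteq \mathcal{P}_{m,0}^{(n)}$), so the entire statement reduces to showing $s_i q \in \mathcal{P}_{m,0}^{(n)}$.

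The one substantive step is therefore the filtration-shift property $s_i \mathcal{P}_{m,0}^{(k)} \subseteq \mathcal{P}_{m,0}^{(k+1)}$, which I would read off directly from Proposition \ref{psi_span}: on each basis vector $\psi_{E'}$ with $E' \in \mathcal{E}_0$, the action of $s_i$ is either $\pm \psi_{E'}$ or $\psi_{s_i E'}$, and the computation of $\mathrm{inv}(s_i E')$ recorded just after Definition \ref{defdual} shows that $\mathrm{inv}(s_i E')$ differs from $\mathrm{inv}(E')$ by at most one. Because $i+1 \leq N-1$, the index $N$ is fixed by $s_i$ and remains in $s_i E'$, so $s_i E' \in \mathcal{E}_0$ and the basis vector stays within the $\psi$-basis of $\mathcal{P}_{m,0}$. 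The main obstacle I anticipate is not conceptual depth but bookkeeping: one must confirm that the constraint $i \leq N-2$ inherited from the recurrence is exactly what guarantees $s_i$ preserves the subfamily $\mathcal{E}_0$, and check all four cases of Proposition \ref{psi_span} to ensure the induction stays inside the filtration.
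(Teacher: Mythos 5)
Your proposal is correct and follows essentially the same route as the paper: induction on $\mathrm{inv}$, reducing $F$ to $E=s_iF$ with $\mathrm{inv}(E)=n$, invoking the recurrence of Theorem \ref{TEexist}, the identity $s_i\psi_E=\psi_{s_iE}$ from Proposition \ref{psi_span}(3), and the filtration shift $s_i\mathcal{P}_{m,0}^{(k)}\subseteq\mathcal{P}_{m,0}^{(k+1)}$ for $i<N-1$. Your extra care in exhibiting the gap index $i\leq N-2$ and in checking that $s_i$ preserves $\mathcal{E}_0$ only makes explicit what the paper leaves implicit.
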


\begin{proof}
By Proposition \ref{psi_span} if $\psi_{E}\in\mathcal{P}_{m,0}^{\left(
k\right)  }$ and $i<N-1$ then $s_{i}\psi_{E}\in\mathcal{P}_{m,0}^{\left(
k+1\right)  }$. Proceeding by induction suppose the statement is true for some
$n$ and $\mathrm{inv}\left(  F\right)  =n+1,F\in\mathcal{E}_{0}$; then
$F=s_{i}E$ with $\left(  i,i+1\right)  \in E^{C}\times E$ and $\mathrm{inv}%
\left(  E\right)  =n$. Let $T_{E}=\psi_{E}+p_{E}$ with $p_{E}\in
\mathcal{P}_{m,0}^{\left(  n-1\right)  }$, and $b=\left(  c\left(  i,E\right)
-c\left(  i+1,E\right)  \right)  ^{-1}$ then%
\[
T_{F}=s_{i}T_{E}-bT_{E}=s_{i}\psi_{E}+s_{i}p_{E}+bT_{E},
\]
and $s_{i}p_{E}+bT_{E}\in\mathcal{P}_{m,0}^{\left(  n\right)  }$. By
Proposition \ref{psi_span} (3) $s_{i}\psi_{E}=\psi_{s_{i}E}=\psi_{F}$.
\end{proof}

\begin{corollary}
Suppose $E\in\mathcal{E}_{0}$ and $c\left(  i,E\right)  -c\left(
i+1,E\right)  =\varepsilon$ with $\varepsilon=\pm1$, for some $i<N$, then
$s_{i}T_{E}=\varepsilon T_{E}$.
\end{corollary}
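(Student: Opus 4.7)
The plan is to prove $u := (s_i-\varepsilon)T_E=0$ by showing that it would otherwise be a joint eigenvector of the Jucys-Murphy elements $\{\omega_j\}$ with a forbidden eigenvalue tuple. First I would translate the hypothesis into combinatorial terms using Definition \ref{contdef}: checking the four possibilities for membership of $i$ and $i+1$ in $E$, one finds that $c(i,E)-c(i+1,E)=\pm 1$ forces $i$ and $i+1$ to lie on the same side of $E$. Specifically, $\varepsilon=-1$ iff $i,i+1\in E$ (using $s(i,E)=s(i+1,E)+1$) and $\varepsilon=+1$ iff $i,i+1\notin E$; in the two mixed cases the difference has absolute value at least $2$, using $N\in E$ to rule out the borderline subcases.

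Next I would compute the action of the $\omega_j$ on $u$. The relation $s_i\omega_i s_i=\omega_{i+1}+s_i$ yields $\omega_i s_i=s_i\omega_{i+1}+1$ and $\omega_{i+1}s_i=s_i\omega_i-1$, while $\omega_j$ commutes with $s_i$ for $j\ne i,i+1$. Applying these to $T_E$ (on which $\omega_j$ acts by $c(j,E)$ by Theorem \ref{TEexist}) and using the identity $c(i,E)=c(i+1,E)+\varepsilon$ to cancel the spurious $\pm T_E$ terms, a short computation gives
\[
\omega_i u=c(i+1,E)\,u,\qquad \omega_{i+1}u=c(i,E)\,u,\qquad \omega_j u=c(j,E)\,u\ \text{for } j\ne i,i+1.
\]
Thus either $u=0$ or $u$ is a simultaneous eigenvector of $\{\omega_j\}$ whose eigenvalue tuple is the content vector of $E$ with positions $i$ and $i+1$ interchanged.

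Finally I would show no such nonzero eigenvector can exist in $\mathcal{P}_{m,0}$. Since $\{T_F:F\in\mathcal{E}_0\}$ is a basis of joint eigenvectors with pairwise distinct content vectors, $u\ne 0$ would require an $F\in\mathcal{E}_0$ with $c(i,F)=c(i+1,E)$, $c(i+1,F)=c(i,E)$, and $c(j,F)=c(j,E)$ for $j\ne i,i+1$. By Definition \ref{contdef} the sign of $c(j,E)$ already determines whether $j\in E$, so $F$ and $E$ must agree outside $\{i,i+1\}$. In the $\varepsilon=-1$ case the required $c(i,F)$ and $c(i+1,F)$ are both $\le 0$, forcing $i,i+1\in F$ and hence $F=E$; but then $c(i,F)=-s(i,E)=c(i,E)\neq c(i+1,E)$, a contradiction. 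The $\varepsilon=+1$ case is symmetric, with $\ge 1$ replacing $\le 0$ and $F^C=E^C$ in place of $F=E$. Hence $u=0$, which is the claim.

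The main obstacle is the bookkeeping in the middle step: arranging the Jucys-Murphy computation so that the relation $c(i,E)-c(i+1,E)=\varepsilon$ makes the cross terms collapse into a clean eigenvalue identity for $u$. The final contradiction is essentially the classical fact that in a hook tableau two consecutive integers whose contents differ by exactly $\pm 1$ must sit in adjacent cells of the same row or column, so swapping them destroys RSYT-ness; the proof above encodes this obstruction directly in the language of $E$ and $E^C$.
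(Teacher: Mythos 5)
Your strategy is exactly the paper's: set $u=(s_i-\varepsilon)T_E$, use the commutation relations $\omega_is_i=s_i\omega_{i+1}+1$, $\omega_{i+1}s_i=s_i\omega_i-1$ to show that a nonzero $u$ would be a joint $\{\omega_j\}$-eigenvector whose eigenvalue list is the content vector of $E$ with positions $i$ and $i+1$ swapped, and then rule that out. The middle computation is correct, and your final step is a legitimate (and more explicit) replacement for the paper's one-line appeal to ``$i,i+1$ lie in the same row or column of $Y_E$'': you use that $\{T_F:F\in\mathcal{E}_0\}$ is an eigenbasis with pairwise distinct content vectors and that the sign of $c(j,F)$ determines membership of $j$ in $F$.

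There is, however, a genuine flaw in your opening classification, and it propagates to the last step. It is true that $\varepsilon=-1$ forces $i,i+1\in E$, and that the mixed case $(i,i+1)\in E\times E^C$ gives $c(i,E)-c(i+1,E)\le -2$ (here $N\in E$ gives $s(i,E)\ge1$). But the other mixed case is not excluded: for $(i,i+1)\in E^C\times E$ one computes $c(i,E)-c(i+1,E)=s(i,E^C)+1+s(i+1,E)=N-i$, which equals $1$ when $i=N-1$. Concretely, if $N-1\notin E$ then $c(N-1,E)=1$, $c(N,E)=0$ and $\varepsilon=+1$, yet $i+1=N\in E$; so ``$\varepsilon=+1$ iff $i,i+1\notin E$'' is false. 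In this missed subcase your closing argument ``the required $c(i,F)$ and $c(i+1,F)$ are both $\ge1$'' fails, because the required value is $c(i,F)=c(N,E)=0$. The conclusion still holds, and in fact more easily: $c(i,F)=0$ is impossible for $i<N$ and $F\in\mathcal{E}_0$, since $i\in F$ gives $c(i,F)=-s(i,F)\le-1$ (as $N\in F$) while $i\notin F$ gives $c(i,F)\ge1$. You need to add this subcase explicitly; it corresponds to $N$ sitting at cell $[1,1]$ and $N-1$ at $[1,2]$ of $Y_E$ — the same row, consistent with the paper's terse justification.
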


\begin{proof}
Let $p=s_{i}T_{E}-\varepsilon T_{E}$. By an equation similar to those in the
proof of Theorem \ref{psi_eigval} one finds that $\omega_{i}p=c\left(
i+1,E\right)  p$ , $\omega_{i+1}p=c\left(  i,E\right)  p,$ and $\omega
_{j}p=c\left(  j,E\right)  p$ for $j\neq i,i+1$. These eigenvalues are
impossible because $i,i+1$ are in the same row of $Y_{E}$ if $\varepsilon=1$,
or the same column if $\varepsilon=-1$. Thus $p=0.$
\end{proof}

We have shown that $\left\{  \psi_{E}:E\in\mathcal{E}_{0}\right\}  $ is a
basis, implicitly based on the degree of $\left(  N-m,1^{m}\right)  $ being
$\binom{N-1}{m}$. Explicitly it can be shown (with straightforward
computation) that if $\#E=m+1$ and $N\notin E$ then $\psi_{E}=\sum
\limits_{j\in E}\sigma\left(  s\left(  j,E\right)  \right)  \psi_{\left(
j,N\right)  E}$ (recall $\left(  j,N\right)  E=\left(  E\backslash\left\{
j\right\}  \right)  \cup\left\{  N\right\}  $). Also $\left\{  T_{E}%
:E\in\mathcal{E}_{0}\right\}  $ is an orthogonal basis.

\subsection{The submodule of isotype $\left(  N-m+1,1^{m-1}\right)  $}

In order to complete the proof that $\mathcal{P}_{m}\simeq\left(
N-m,1^{m}\right)  \oplus\left(  N-m+1,1^{m-1}\right)  $ we will use the
duality map $\delta$ from $\left(  m,1^{N-m}\right)  $ to $\left(
N-m+1,1^{m-1.}\right)  .$ Begin by transferring the above results to $\left(
m,1^{N-m}\right)  $ by interchanging $m$ and $N-m$. We use $\bot$ to mark the
corresponding objects: $\mathcal{E}_{0}^{\bot}=\left\{  E:\#E=N-m+1,N\in
E\right\}  $, $E_{0}^{\bot}=\left\{  m,m+1,\ldots,N\right\}  $. Thus
$T_{E_{0}}^{\bot}=\psi_{E_{0}}^{\bot}=\sum_{j=m}^{N}\left(  -1\right)
^{N+1-j}\phi_{E_{0}^{\bot}\backslash\left\{  j\right\}  }$ and%
\[
\delta T_{E_{0}}^{\bot}=\sum_{j=m}^{N}\left(  -1\right)  ^{N-j}\sigma\left(
\mathrm{inv}\left(  E_{0}^{\bot}\backslash\left\{  j\right\}  \right)
\right)  \phi_{\left(  E_{0}^{\bot}\backslash\left\{  j\right\}  \right)
^{C}}.
\]
Let $E_{1}=\left(  E_{0}^{\bot}\right)  ^{C}=\left\{  1,2,\ldots,m-1\right\}
$ and $\left(  E_{0}^{\bot}\backslash\left\{  j\right\}  \right)  ^{C}%
=E_{1}\cup\left\{  j\right\}  $ for $m\leq j\leq N$. Also $\mathrm{inv}\left(
E_{0}^{\bot}\backslash\left\{  j\right\}  \right)  =j-m$ and thus%
\[
\delta T_{E_{0}}^{\bot}=\left(  -1\right)  ^{N-m}\sum_{j=m}^{N}\phi_{E_{i}%
\cup\left\{  j\right\}  }.
\]
Since $s_{i}\delta=-\delta s_{i}$ for all $i<N$ we see (from Theorem
\ref{psi_eigval}) that $\omega_{i}\left(  \delta T_{E_{0}}^{\bot}\right)
=-c\left(  i,E_{0}^{\bot}\right)  \delta T_{E_{0}}^{\bot}$ for $1\leq i\leq N$
and $-c\left(  i,E_{0}^{\bot}\right)  =-\left(  m-i\right)  $ for $1\leq i\leq
m-1$ and $-c\left(  i,E_{0}^{\bot}\right)  =\left(  N-i\right)  $ for $m\leq
i\leq N$. Thus the respective $\left\{  \omega_{i}\right\}  $ eigenvalues of
$\delta T_{E_{0}}^{\bot}$ coincide with the content vector of $Y_{E_{0}}%
^{\bot}$. So the $\mathcal{S}_{N}$-module generated by $\delta T_{E_{0}}%
^{\bot}$ is of isotype $\left(  N-m+1,1^{m-1}\right)  $, of degree
$\binom{N-1}{m-1}$ and this establishes the decomposition of $\mathcal{P}_{m}$
(the sum of the degrees of $\left(  N-m,1^{m}\right)  $ and $\left(
N-m+1,1^{m-1}\right)  $ is $\binom{N-1}{m}+\binom{N-1}{m-1}=\binom{N}{m}$).
Motivated by the formula for $\delta T_{E_{0}}^{\bot}$ we make the following:

\begin{definition}
Suppose $\#E=m-1$ then%
\[
\eta_{E}:=\sum_{j\notin E}\sigma\left(  s\left(  j,E\right)  \right)
\phi_{E\cup\left\{  j\right\}  }.
\]

\end{definition}

Thus $\delta T_{E_{0}}^{\bot}=\left(  -1\right)  ^{N-m}\eta_{E_{1}}$. Suppose
$E\in\mathcal{E}_{0}^{\bot}$ then $\#E^{C}=m-1$ and $N\notin E$, accordingly
define $\mathcal{E}_{1}=\left\{  E:\#E=m-1,N\notin E\right\}  $ and
$\mathcal{P}_{m,1}=\mathrm{span}\left\{  \eta_{E}:E\in\mathcal{E}_{1}\right\}
$. The corresponding definition of $Y_{E}$ (Definition \ref{defYE0}) is:

\begin{definition}
\label{defYE1}Suppose $E\in\mathcal{E}_{1}$ and $E=\left\{  i_{1}%
,\ldots,i_{m-1}\right\}  $, $E^{C}=\left\{  j_{1},\ldots,j_{N-m+1}\right\}  $
with $i_{1}<i_{2}<\cdots$ and $j_{1}<j_{2}<\cdots<j_{N-m+1}=N$ then $Y_{E}$ is
the RSYT of shape $\left(  N-m+1,1^{m-1}\right)  $ given by $Y_{E}\left[
k,1\right]  =i_{m+1-k}$ for $2\leq k\leq m$, $Y_{E}\left[  1,k\right]
=j_{N-m+2-k}$ for $1\leq k\leq N-m+1$.
\end{definition}

Example: let $N=9,m=3,E=\left\{  1,2\right\}  $ then%
\[
Y_{E}=%
\begin{bmatrix}
8 & 7 & 6 & 5 & 4 & 3\\
\circ & 2 & 1 &  &  &
\end{bmatrix}
.
\]

The following is used to find the formula relating $\delta\psi_{F}$ to
$\eta_{E}$ for $F\in\mathcal{E}_{0}^{\bot}$ with $F=E^{C}$.

\begin{lemma}
Suppose $j\in E$, then%
\[
\mathrm{inv}\left(  E\backslash\left\{  j\right\}  \right)  +s\left(
j,E\right)  +s\left(  j,E^{C}\right)  =\mathrm{inv}\left(  E\right)  +\left(
\#E-1\right)  .
\]

\end{lemma}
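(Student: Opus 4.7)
The plan is to compare $\mathrm{inv}(E)$ and $\mathrm{inv}(E \setminus \{j\})$ directly by bookkeeping, using the elementary observation that removing $j$ from $E$ has the effect of moving $j$ into the complement: if $E' := E \setminus \{j\}$, then $(E')^C = E^C \cup \{j\}$. So the two inversion counts involve almost the same Cartesian product, differing only in how pairs involving $j$ are partitioned.

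More precisely, I would split each inversion count into a common part plus a part involving $j$. Define
\[
A := \#\{(i,k) \in (E \setminus \{j\}) \times E^C : i < k\}.
\]
On the one hand, $\mathrm{inv}(E)$ equals $A$ plus the count of pairs $(j,k)$ with $k \in E^C$ and $j < k$, which is exactly $s(j, E^C)$. On the other hand, $\mathrm{inv}(E')$ equals $A$ plus the count of pairs $(i,j)$ with $i \in E \setminus \{j\}$ and $i < j$. Since $E \setminus \{j\}$ has $\#E - 1$ elements, of which $s(j,E)$ exceed $j$, the latter count is $(\#E - 1) - s(j,E)$.

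Putting these together gives
\[
\mathrm{inv}(E) - \mathrm{inv}(E \setminus \{j\}) = s(j, E^C) - (\#E - 1) + s(j, E),
\]
which rearranges immediately to the claimed identity. The only step requiring any care is correctly identifying the count of elements of $E \setminus \{j\}$ below $j$, and this is just a complementary count against $s(j,E)$, so there is no serious obstacle; the lemma is a purely combinatorial bookkeeping identity.
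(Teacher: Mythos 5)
Your proof is correct and is essentially the argument in the paper: both compare $\mathrm{inv}(E)$ with $\mathrm{inv}(E\setminus\{j\})$ by isolating the pairs involving $j$, counting $s(j,E^C)$ pairs $(j,k)$ lost and $\#E-1-s(j,E)$ pairs $(i,j)$ gained. Naming the common part $A$ explicitly is just a slightly more formal presentation of the same bookkeeping.
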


\begin{proof}
Since $\mathrm{inv}\left(  E\right)  =\#\left\{  \left(  i,k\right)  \in
E\times E^{C}:i<k\right\}  $ we see that the set of pairs being counted for
$\mathrm{inv}\left(  E\backslash\left\{  j\right\}  \right)  $ omits $\left\{
\left(  j,k\right)  :k\in E^{C}\right\}  $ and includes $\left\{  \left(
i,j\right)  :i\in E,i<j\right\}  $. The cardinalities of these two sets are
$s\left(  j,E^{C}\right)  $ and $\left(  \#E-1-s\left(  j,E\right)  \right)  $ respectively.
\end{proof}

\begin{proposition}
\label{deltapsi}Suppose $E\in\mathcal{E}_{1}$ then $\delta\psi_{E^{C}}^{\bot
}=\left(  -1\right)  ^{N-m}\sigma\left(  \mathrm{inv}\left(  E^{C}\right)
\right)  \eta_{E}$.
\end{proposition}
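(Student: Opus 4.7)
The plan is to expand both sides using the definitions of $\psi^{\bot}$, $\delta$, and $\eta$, and reconcile the resulting sign factors with the preceding lemma.

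First I would write out $\delta\psi_{E^{C}}^{\bot}$ directly. Since $E\in\mathcal{E}_{1}$ gives $\#E^{C}=N-m+1$ with $N\in E^{C}$, so that $E^{C}\in\mathcal{E}_{0}^{\bot}$, the definition of $\psi^{\bot}$ (obtained by interchanging $m$ and $N-m$ in the definition of $\psi$) yields
\[
\psi_{E^{C}}^{\bot}=\sum_{j\in E^{C}}\sigma\!\left(s(j,E^{C})\right)\phi_{E^{C}\setminus\{j\}}.
\]
Applying $\delta$ termwise via Definition~\ref{defdual} gives
\[
\delta\psi_{E^{C}}^{\bot}=\sum_{j\in E^{C}}\sigma\!\left(s(j,E^{C})\right)\sigma\!\left(\mathrm{inv}(E^{C}\setminus\{j\})\right)\phi_{(E^{C}\setminus\{j\})^{C}}.
\]
Since $(E^{C}\setminus\{j\})^{C}=E\cup\{j\}$ and the sum over $j\in E^{C}$ is the same as the sum over $j\notin E$, the expression already has the correct index set to match $\eta_{E}=\sum_{j\notin E}\sigma(s(j,E))\phi_{E\cup\{j\}}$.

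Next I would reconcile the signs. Applying the preceding lemma with $E$ replaced by $E^{C}$ (and using $(E^{C})^{C}=E$, $\#E^{C}-1=N-m$) gives, for each $j\in E^{C}$,
\[
\mathrm{inv}(E^{C}\setminus\{j\})+s(j,E^{C})+s(j,E)=\mathrm{inv}(E^{C})+(N-m).
\]
Hence, modulo $2$,
\[
s(j,E^{C})+\mathrm{inv}(E^{C}\setminus\{j\})\equiv \mathrm{inv}(E^{C})+(N-m)-s(j,E),
\]
so the coefficient of $\phi_{E\cup\{j\}}$ in $\delta\psi_{E^{C}}^{\bot}$ equals $(-1)^{N-m}\sigma(\mathrm{inv}(E^{C}))\,\sigma(s(j,E))$. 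Pulling the $j$-independent sign out of the sum produces exactly $(-1)^{N-m}\sigma(\mathrm{inv}(E^{C}))\,\eta_{E}$, as claimed.

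The only potentially delicate point is making sure the lemma applies to $E^{C}$ at each $j\in E^{C}$ (it does, since $j$ is then an element of the set in question) and that the complementation identities $(E^{C})^{C}=E$ and $(E^{C}\setminus\{j\})^{C}=E\cup\{j\}$ are used consistently. Everything else is a straightforward bookkeeping of parities, so I do not expect any real obstacle beyond careful tracking of these sign contributions.
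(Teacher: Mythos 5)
Your proof is correct and follows essentially the same route as the paper: expand $\psi_{E^{C}}^{\bot}$, apply $\delta$ termwise, and use the preceding lemma with $E$ replaced by $E^{C}$ (so that $\#E^{C}-1=N-m$) to collect the $j$-independent sign $(-1)^{N-m}\sigma(\mathrm{inv}(E^{C}))$. No gaps; the sign bookkeeping matches the paper's computation exactly.
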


\begin{proof}
Let $F=E^{C}$ then%
\begin{align*}
\delta\psi_{F}^{\bot}  &  =\delta\sum_{j\in F}\sigma\left(  s\left(
j,F\right)  \right)  \phi_{F\backslash\left\{  j\right\}  }=\sum_{j\in
E}\sigma\left(  s\left(  j,F\right)  +\mathrm{inv}\left(  F\backslash\left\{
j\right\}  \right)  \right)  \phi_{\left(  F\backslash\left\{  j\right\}
\right)  ^{C}}\\
&  =\sum_{j\notin E}\sigma\left(  -s\left(  j,E\right)  +\mathrm{inv}\left(
F\right)  +\#F-1\right)  \phi_{E\cup\left\{  j\right\}  }\\
&  =\left(  -1\right)  ^{N-m}\sigma\left(  \mathrm{inv}\left(  F\right)
\right)  \sum_{j\notin E}\sigma\left(  s\left(  j,E\right)  \right)
\phi_{E\cup\left\{  j\right\}  }%
\end{align*}
by the Lemma applied to $F$.
\end{proof}

We are ready to define the basis elements of $\mathcal{P}_{m,1}$ corresponding
to the RSYT of shape $\left(  N-m+1,1^{m-1}\right)  $.

\begin{definition}
For $E\in\mathcal{E}_{1}$ let $T_{E}:=\left(  -1\right)  ^{N-m}\sigma\left(
\mathrm{inv}\left(  E^{C}\right)  \right)  \delta T_{E^{c}}^{\bot}$.
\end{definition}

\begin{proposition}
Suppose $E\in\mathcal{E}_{1}$ and $1\leq i\leq N$ then $\omega_{i}%
T_{E}=-c\left(  i,E^{C}\right)  T_{E}$.
\end{proposition}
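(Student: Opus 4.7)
My plan is to reduce the claim to the already-established eigenvalue formula for $T_{E^C}^\bot$ (the $\bot$-version of Theorem \ref{TEexist}) by pushing $\omega_i$ through the duality map $\delta$. Concretely, since $E\in\mathcal{E}_1$ means $\#E=m-1$ and $N\notin E$, its complement $E^C$ lies in $\mathcal{E}_0^\bot$, so the result of the previous subsection (with $m$ and $N-m$ swapped) gives $\omega_i T_{E^C}^\bot = c(i,E^C)\,T_{E^C}^\bot$. The definition of $T_E$ is $(-1)^{N-m}\sigma(\mathrm{inv}(E^C))\,\delta T_{E^C}^\bot$, so the whole problem is to compute $\omega_i\delta$ in terms of $\delta\omega_i$.

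The key observation is that every transposition $(i,j)$ is a product of an odd number of adjacent transpositions $s_k$: explicitly, $\ell((i,j))=2(j-i)-1$ is odd. By the Corollary following Proposition~5 we have $\delta w=\sigma(\ell(w))w\delta$ for every $w\in\mathcal{S}_N$, and therefore $\delta(i,j)=-(i,j)\delta$ for each transposition. Since $\omega_i=\sum_{j=i+1}^N(i,j)$ is a sum of transpositions, this yields the anticommutation relation
\[
\omega_i\delta=-\delta\omega_i,\qquad 1\leq i\leq N.
\]

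Assembling the pieces, I would simply compute
\begin{align*}
\omega_i T_E &= (-1)^{N-m}\sigma(\mathrm{inv}(E^C))\,\omega_i\delta T_{E^C}^\bot\\
 &= -(-1)^{N-m}\sigma(\mathrm{inv}(E^C))\,\delta\omega_i T_{E^C}^\bot\\
 &= -c(i,E^C)\,(-1)^{N-m}\sigma(\mathrm{inv}(E^C))\,\delta T_{E^C}^\bot\\
 &= -c(i,E^C)\,T_E,
\end{align*}
using the scalar eigenvalue of $T_{E^C}^\bot$ in the third line.

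Since the computation is essentially bookkeeping, there is no real obstacle — the only care needed is to make sure that the $\bot$-analog of Theorem~\ref{TEexist} has indeed been established (it was, by explicitly interchanging the roles of $m$ and $N-m$ in the preceding subsection and taking $T_{E_0}^\bot=\psi_{E_0}^\bot$ as the base case), and that $c(i,E^C)$ in the statement of the proposition is read in the sense of the $\bot$-content vector, i.e.\ $-s(i,E^C)$ for $i\in E^C$ and $s(i,E)+1$ for $i\in E$. With those conventions fixed, the anticommutation $\omega_i\delta=-\delta\omega_i$ does all the work and accounts for the minus sign in front of $c(i,E^C)$.
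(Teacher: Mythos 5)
Your proof is correct and is essentially the argument the paper intends: the paper states this proposition without a separate proof, having just carried out the identical computation for the base case $\delta T_{E_{0}}^{\bot}$ using $s_{i}\delta=-\delta s_{i}$, and your derivation of $\omega_{i}\delta=-\delta\omega_{i}$ from the oddness of $\ell((i,j))$ together with the $\bot$-version of Theorem \ref{TEexist} is exactly the intended generalization. The closing remark pinning down the convention for $c(i,E^{C})$ as the $\bot$-content vector is the right point to be careful about, and you have it right.
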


Thus $T_{E}$ corresponds to the transpose of $Y_{E^{C}}$. The content vector
$\left[  c\left(  i,E\right)  \right]  _{i=1}^{N}$ of $E$ is given by
$c\left(  i,E\right)  =-1-s\left(  i,E\right)  $ if $i\in E$ and $c\left(
i,E\right)  =s\left(  i,E^{C}\right)  $ if $i\notin E$ (so that $c\left(
i,E\right)  =c\left(  i,Y_{E}\right)  $). From Definition \ref{contdef} it
follows that $c\left(  i,E\right)  =-c\left(  i,E^{C}\right)  $ for all $i$.
This subsection concludes with the transformation properties of $T_{E}$
derived from $\delta$.

\begin{proposition}
Suppose $E\in\mathcal{E}_{1}$, $i\in E$ and $i+1\in E^{C}\backslash\left\{
N\right\}  $ then $T_{s_{i}E}=s_{i}T_{E}-\left(  c\left(  i,E\right)
-c\left(  i+1,E\right)  \right)  ^{-1}T_{E}$.
\end{proposition}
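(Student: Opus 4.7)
The plan is to pull the claim back through the duality map $\delta$ and reduce it to the already-proven Theorem \ref{TEexist}. Set $F := E^{C}$; since $E\in\mathcal{E}_{1}$ we have $F\in\mathcal{E}_{0}^{\bot}$, and the hypothesis $i\in E$, $i+1\in E^{C}\setminus\{N\}$ translates to $i\notin F$, $i+1\in F\setminus\{N\}$, which is exactly the hypothesis of the $\bot$-version of Theorem \ref{TEexist} applied to $F$. That version yields
$$T^{\bot}_{s_{i}F} = s_{i}T^{\bot}_{F} - \frac{1}{c^{\bot}(i,F)-c^{\bot}(i+1,F)}\,T^{\bot}_{F},$$
where $c^{\bot}$ denotes the content function of Definition \ref{contdef} transferred to $\mathcal{E}_{0}^{\bot}$ by interchanging $m$ with $N-m$.

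Next I would apply $\delta$ to both sides and use the corollary $\delta s_{i}=-s_{i}\delta$, obtaining
$$\delta T^{\bot}_{s_{i}F} = -s_{i}\,\delta T^{\bot}_{F} - \frac{1}{c^{\bot}(i,F)-c^{\bot}(i+1,F)}\,\delta T^{\bot}_{F}.$$
By the definition $T_{E}=(-1)^{N-m}\sigma(\mathrm{inv}(F))\,\delta T^{\bot}_{F}$ we have $\delta T^{\bot}_{F}=(-1)^{N-m}\sigma(\mathrm{inv}(F))\,T_{E}$, and likewise $\delta T^{\bot}_{s_{i}F}=(-1)^{N-m}\sigma(\mathrm{inv}(s_{i}F))\,T_{s_{i}E}$ (noting $s_{i}F=(s_{i}E)^{C}$ since $i+1\neq N$). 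Substituting and dividing out the common prefactor $(-1)^{N-m}\sigma(\mathrm{inv}(F))$ reduces the problem to reconciling three signs.

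The three signs are the $-1$ from $\delta s_{i}=-s_{i}\delta$; the ratio $\sigma(\mathrm{inv}(s_{i}F))/\sigma(\mathrm{inv}(F))=-1$, because $(i,i+1)\in F^{C}\times F$ forces $\mathrm{inv}(s_{i}F)=\mathrm{inv}(F)+1$ by the formulas recorded after Proposition 1; and the identity $c^{\bot}(i,F)=-c(i,E)$, which follows from comparing Definition \ref{contdef} on $F=E^{C}$ with the explicit formula for $c(\cdot,E)$ on $\mathcal{E}_{1}$ stated just before the claim. The first two signs conspire to turn $-s_{i}T_{E}$ back into $+s_{i}T_{E}$, while the third flips $1/(c^{\bot}(i,F)-c^{\bot}(i+1,F))$ to $-1/(c(i,E)-c(i+1,E))$, producing the stated formula.

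The main obstacle is purely bookkeeping: three independent sources of signs — the anticommutation of $\delta$ with $s_{i}$, the change in $\mathrm{inv}$ under $s_{i}$, and the flip between $c$ and $c^{\bot}$ — must align perfectly, and a single misplaced factor would derail the identity. Once the signs are tracked, no further calculation is required: the result is Theorem \ref{TEexist} transported through $\delta$.
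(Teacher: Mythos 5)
Your proposal is correct and follows essentially the same route as the paper: set $F=E^{C}$, invoke the $\bot$-version of Theorem \ref{TEexist}, apply $\delta$ using $\delta s_{i}=-s_{i}\delta$, and cancel the prefactors via $\mathrm{inv}\left(  s_{i}F\right)  =\mathrm{inv}\left(  F\right)  +1$ and $c\left(  i,F\right)  -c\left(  i+1,F\right)  =-\left(  c\left(  i,E\right)  -c\left(  i+1,E\right)  \right)  $. The sign bookkeeping you describe matches the paper's computation exactly.
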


\begin{proof}
Set $F=E^{C}$ then $T_{s_{i}F}^{\bot}=s_{i}T_{F}^{\bot}-\left(  c\left(
i,F\right)  -c\left(  i+1,F\right)  \right)  ^{-1}T_{F}^{\bot}$ by Theorem
\ref{TEexist}. Apply $\delta$ to the equation:%
\begin{align*}
\delta T_{s_{i}F}^{\bot}  &  =-s_{i}\delta T_{F}^{\bot}-\left(  c\left(
i,F\right)  -c\left(  i+1,F\right)  \right)  ^{-1}\delta T_{F}^{\bot},\\
\left(  -1\right)  ^{N-m}\sigma\left(  \mathrm{inv}\left(  s_{i}F\right)
\right)  T_{s_{i}E}  &  =\left(  -1\right)  ^{N-m}\sigma\left(  \mathrm{inv}%
\left(  F\right)  \right)  \left(  -s_{i}-\left(  c\left(  i,F\right)
-c\left(  i+1,F\right)  \right)  ^{-1}\right)  T_{E},\\
T_{s_{i}E}  &  =\left(  s_{i}+\left(  c\left(  i,F\right)  -c\left(
i+1,F\right)  \right)  ^{-1}\right)  T_{E},
\end{align*}
because $\mathrm{inv}\left(  s_{i}F\right)  =\mathrm{inv}\left(  F\right)
+1$. Furthermore $c\left(  i,F\right)  -c\left(  i+1,F\right)  =-c\left(
i,E\right)  +c\left(  i+1,E\right)  $.
\end{proof}

Notice that the direction $E\rightarrow s_{i}E$ decreases $\mathrm{inv}\left(
E\right)  $ and $E_{1}$ maximizes $\mathrm{inv}\left(  E\right)  $ in
$\mathcal{E}_{1}$. The set $\left\{  T_{E}:E\in\mathcal{E}_{1}\right\}  $ is
an orthogonal basis for $\mathcal{P}_{m,1}$.

\subsection{Projections}

Define $\partial\theta_{i}$ by $\partial\theta_{i}\phi_{E}=0$ and
$\partial\theta_{i}\left(  \theta_{i}\phi_{E}\right)  =\phi_{E}$ where
$i\notin E$, thus for $i\in E$%
\[
\partial\theta_{i}\phi_{E}=\sigma\left(  \#\left\{  j\in E:j<i\right\}
\right)  \phi_{E\backslash\left\{  i\right\}  }.
\]
Let $\widetilde{\theta}_{i}$ denote the multiplication operator $\phi
_{E}\longmapsto\theta_{i}\phi_{E}=\sigma\left(  \#\left\{  j\in E:j<i\right\}
\right)  \phi_{E\cup\left\{  i\right\}  }$ if $i\notin E$, otherwise
$\widetilde{\theta}_{i}\phi_{E}=0$. If $\#E=k$ then $\delta\partial_{i}%
\phi_{E}=\left(  -1\right)  ^{N-k}\theta_{i}\delta\phi_{E}$ (straightforward
proof). Let $M:=\sum_{i=1}^{N}\widetilde{\theta}_{i}$, $D:=\sum_{i=1}%
^{N}\partial\theta_{i}$. With the usual calculations it can be shown that%
\begin{align*}
MD+DM  &  =N\\
\delta D\phi_{E}  &  =\sigma\left(  \#E^{C}\right)  M\delta\phi_{E}\\
\left(  \ker D\right)  \cap\mathcal{P}_{m}  &  =\mathcal{P}_{m,0}\\
\left(  \ker M\right)  \cap\mathcal{P}_{m}  &  =\mathcal{P}_{m,1}\\
\delta\left(  \ker D\right)   &  \subset\ker M
\end{align*}
Also $\left(  MD\right)  ^{2}=N\left(  MD\right)  ,~\left(  DM\right)
^{2}=N\left(  DM\right)  $, so the eigenvalues of $MD,DM$ are $N,0$. Thus
$\frac{1}{N}DM$, $\frac{1}{N}MD$ are the projections $\mathcal{P}%
_{m}\rightarrow\mathcal{P}_{m,0}$, $\mathcal{P}_{m}\rightarrow\mathcal{P}%
_{m,1}$ respectively.

\subsection{Norms\label{Tnorms}}

Recall that $\left\{  \phi_{E}:\#E=m\right\}  $ is an orthonormal basis for
$\mathcal{P}_{m}$, and each $s_{i}$ is a self-adjoint isometry (This can be
implemented by defining dual variables $\widehat{\theta}_{i}$ with the
property $\widehat{\theta}_{i}\theta_{j}=\delta_{ij}$ , and if $E=\left\{
i_{1},\ldots,i_{k}\right\}  $ with $i_{1}<i_{2}<\cdots<i_{k}$ then define
$\widehat{\phi_{E}}=\widehat{\theta}_{i_{k}}\cdots\widehat{\theta}_{i_{2}%
}\widehat{\theta}_{i_{1}}$; thus $\left\langle \phi_{F},\phi_{E}\right\rangle
=\widehat{\phi_{F}}\phi_{E}$.) Hence each $\omega_{i}$ is self-adjoint and two
eigenvectors with at least one different $\left\{  \omega_{i}\right\}
$-eigenvalue are orthogonal to each other. Suppose for some $f\in
\mathcal{P}_{m}$ there are $i,b$ such that $f^{\prime}=s_{i}f-bf$ satisfies
$\left\langle f,f^{\prime}\right\rangle =0$ then $\left\langle s_{i}%
f,f\right\rangle =b\left\vert f\right\vert ^{2}$ and $\left\vert f^{\prime
}\right\vert ^{2}=\left\vert s_{i}f\right\vert ^{2}-2b\left\langle
s_{i}f,f\right\rangle +b^{2}\left\vert f\right\vert ^{2}=\left(
1-b^{2}\right)  \left\vert f\right\vert ^{2}$ since $\left\vert s_{i}%
f\right\vert ^{2}=\left\vert f\right\vert ^{2}$. In the context of the
previous subsections $\left\vert T_{s_{i}E}\right\vert ^{2}=\left(
1-b^{2}\right)  \left\vert T_{E}\right\vert ^{2}$. By definition $\left\vert
T_{E_{0}}\right\vert ^{2}=m+1$ and $\left\vert T_{E_{1}}\right\vert
^{2}=N-m+1$.

\begin{proposition}
Suppose $E\in\mathcal{E}_{0}$ then%
\[
\left\vert T_{E}\right\vert ^{2}=\left(  m+1\right)  \prod
\limits_{\substack{1\leq i<j<N\\\left(  i,j\right)  \in E\times E^{C}}}\left(
1-\frac{1}{\left(  c\left(  i,E\right)  -c\left(  j,E\right)  \right)  ^{2}%
}\right)  .
\]

\end{proposition}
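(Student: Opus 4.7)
The plan is to proceed by induction on $\mathrm{inv}(E)$, starting from $E_0 = \{N-m,\ldots,N\}$. For the base case, every element of $E_0$ is at least $N-m$ while every element of $E_0^C$ is at most $N-m-1$, so no pair $(i,j) \in E_0 \times E_0^C$ satisfies $i < j$. The product is empty and both sides equal $m+1$, matching the stated fact $|T_{E_0}|^2 = m+1$.

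For the inductive step, I would take $E \in \mathcal{E}_0$ and an index $i$ with $i \notin E$ and $i+1 \in E \setminus \{N\}$, so that $\mathrm{inv}(s_i E) = \mathrm{inv}(E) + 1$. Every $F \in \mathcal{E}_0$ with $\mathrm{inv}(F) \geq 1$ is reached this way, since such an $F$ must contain some consecutive pair $i \in F$, $i+1 \notin F$ with $i+1 < N$, and then $E := s_i F$ has the required configuration. By Theorem \ref{TEexist}, $T_{s_i E} = s_i T_E - b T_E$ with $b = (c(i,E) - c(i+1,E))^{-1}$. Since $T_{s_i E}$ and $T_E$ are $\omega_i$-eigenvectors with distinct eigenvalues they are orthogonal, and the general computation preceding the statement yields $|T_{s_i E}|^2 = (1 - b^2)|T_E|^2$.

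Writing $P(E)$ for the right-hand product, it remains to show $P(s_i E) = (1 - b^2) P(E)$. From the construction of $T_{s_i E}$ the contents transform as $c(i, s_i E) = c(i+1, E)$, $c(i+1, s_i E) = c(i, E)$, and $c(k, s_i E) = c(k, E)$ for $k \neq i, i+1$. I would compare the two products factor by factor. Pairs $(a,b)$ with $\{a,b\} \cap \{i, i+1\} = \emptyset$ contribute identical factors to both sides. For each $k \neq i, i+1$ there are two relevant configurations: if $k < i$ and $k \in E$, then $(k, i)$ appears in $P(E)$ and $(k, i+1)$ appears in $P(s_i E)$, and the content identities make their factors equal; if $i+1 < k < N$ and $k \in E^C$, then $(i+1, k)$ appears in $P(E)$ and $(i, k)$ appears in $P(s_i E)$ with matching factors. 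All other placements of $k$ produce no contribution on either side (the case $k = N \in E$ is excluded by the $j < N$ restriction). The only asymmetric factor comes from the pair $(i, i+1)$ itself: it is absent from $P(E)$ because $i \notin E$, but present in $P(s_i E)$ because $i \in s_i E$, $i+1 \in (s_i E)^C$, and $i+1 < N$; its value is $1 - 1/(c(i+1,E) - c(i,E))^2 = 1 - b^2$.

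The main obstacle is the bookkeeping in this factor-by-factor comparison: one must verify that every pair touching $\{i, i+1\}$ is accounted for correctly in each product and that the surviving discrepancy reduces cleanly to the single factor $1 - b^2$. Once this case analysis is in place, the induction closes.
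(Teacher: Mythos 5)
Your proposal is correct and follows essentially the same route as the paper: induction on $\mathrm{inv}(E)$ starting from the empty product at $E_0$, the recursion $\left\vert T_{s_iE}\right\vert^2=(1-b^2)\left\vert T_E\right\vert^2$ from the norms subsection, and a factor-by-factor matching of the two products using $c(i,s_iE)=c(i+1,E)$, $c(i+1,s_iE)=c(i,E)$, with the pair $(i,i+1)$ supplying the extra factor $1-b^2$. The bookkeeping you flag as the main obstacle checks out exactly as you describe, so nothing further is needed.
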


\begin{proof}
Argue implicitly by induction on $\mathrm{inv}\left(  E\right)  $. The product
for $E=E_{0}$ is empty ($=1$). For each $E$ let $h\left(  i,j;E\right)
=1-\left(  \left(  c(i,E\right)  -c\left(  i+1,E\right)  \right)  ^{-2}$.
Suppose $\left(  k,k+1\right)  \in E^{C}\times E$, then $\mathrm{inv}\left(
s_{k}E\right)  =\mathrm{inv}\left(  E\right)  +1$, and $h\left(  i,j;E\right)
=h\left(  i,j;s_{k}E\right)  $ for $i,j\neq k,k+1$. Also $h\left(
i,k+1;E\right)  =h\left(  i,k;s_{k}E\right)  $ for $i<k$ and $i\in E$;
$h\left(  k+1,j;E\right)  =h\left(  k,j;s_{k}E\right)  $ for $j>k+1$ and
$j\notin E$. $\left\vert T_{s_{k}E}\right\vert ^{2}$ differs from $\left\vert
T_{E}\right\vert ^{2}$ by the extra factor $h\left(  k,k+1;s_{k}E\right)  $
($=h\left(  k,k+1,E\right)  $)
\end{proof}

\begin{proposition}
Suppose $E\in\mathcal{E}_{1}$ then%
\[
\left\vert T_{E}\right\vert ^{2}=\left(  N-m+1\right)  \prod
\limits_{\substack{1\leq i<j<N\\\left(  i,j\right)  \in E^{C}\times E}}\left(
1-\frac{1}{\left(  c\left(  i,E\right)  -c\left(  j,E\right)  \right)  ^{2}%
}\right)  .
\]

\end{proposition}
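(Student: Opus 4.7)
My plan is to deduce this formula from the preceding proposition via the duality map $\delta$, avoiding a second induction. The starting observation is that $\delta$ is an isometry from $\mathcal{P}_{N-m+1}$ to $\mathcal{P}_{m-1}$: since $\{\phi_{F}\}$ and $\{\phi_{F^{C}}\}$ are both orthonormal bases and $\delta\phi_{F}=\sigma(\mathrm{inv}(F))\phi_{F^{C}}$, one has $|\delta p|^{2}=|p|^{2}$ for every $p$. Combined with the definition $T_{E}=(-1)^{N-m}\sigma(\mathrm{inv}(E^{C}))\delta T_{E^{C}}^{\bot}$, this gives $|T_{E}|^{2}=|T_{E^{C}}^{\bot}|^{2}$, so it suffices to compute $|T_{F}^{\bot}|^{2}$ for $F:=E^{C}\in\mathcal{E}_{0}^{\bot}$.

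Next I would apply the previous proposition in the $\bot$-setup (with $m$ replaced by $N-m$ throughout). The leading constant becomes $(N-m)+1=N-m+1$, and the set $F\times F^{C}$ indexing the product is exactly $E^{C}\times E$, matching the index set in the statement.

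The remaining bookkeeping is to identify the content factors. Using Definition \ref{contdef} with $m\to N-m$ applied to $F=E^{C}$, one checks directly that $c^{\bot}(i,E^{C})=-c(i,E)$ for every $i$: if $i\in E^{C}$ then $c^{\bot}(i,E^{C})=-s(i,E^{C})=-c(i,E)$, and if $i\in E$ then $c^{\bot}(i,E^{C})=s(i,E)+1=-c(i,E)$. Since the differences of contents enter the formula squared, this sign is immaterial, and $(c^{\bot}(i,F)-c^{\bot}(j,F))^{2}=(c(i,E)-c(j,E))^{2}$, yielding the claimed formula.

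The only real obstacle is this content-identification bookkeeping, essentially the observation $c(i,E)=-c(i,E^{C})$ already recorded in the text; once it is in place the proposition follows mechanically. As an alternative one could imitate the inductive proof of the $\mathcal{E}_{0}$ case, using that $E_{1}$ maximizes $\mathrm{inv}$ on $\mathcal{E}_{1}$ and that the step $E\to s_{i}E$ with $(i,i+1)\in E\times E^{C}$ decreases $\mathrm{inv}$ by one, with base case $|T_{E_{1}}|^{2}=N-m+1$ and empty product; but the duality route is cleaner.
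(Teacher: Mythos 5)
Your proposal is correct and is essentially the paper's own proof: the paper likewise deduces the $\mathcal{E}_{1}$ formula from the $\mathcal{E}_{0}$ case by noting that $\delta$ is an isometry and applying the previous proposition to $E^{C}\in\mathcal{E}_{0}^{\bot}$. You have simply filled in the bookkeeping (the identity $c\left(  i,E\right)  =-c\left(  i,E^{C}\right)  $ and the matching of the index set $E^{C}\times E$) that the paper leaves implicit.
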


\begin{proof}
This follows from the duality map $\delta$ being an isometry and using the
previous formula for $E^{C}\in\mathcal{E}_{0}^{\bot}$.
\end{proof}

\section{Superpolynomials and Nonsymmetric Jack Polynomials\label{Jackpol}}

Here we extend the polynomials in $\left\{  \theta_{i}\right\}  $ by adjoining
$N$ commuting variables $x_{1},\ldots,x_{N}$ (that is $\left[  x_{i}%
,x_{j}\right]  =0,\left[  x_{i},\theta_{j}\right]  =0,\theta_{i}\theta
_{j}=-\theta_{j}\theta_{i}$ for all $i,j$). Each polynomial is a sum of
monomials $x^{\alpha}\phi_{E}$ where $E\subset\left\{  1,2,\ldots,N\right\}  $
and $\alpha\in\mathbb{N}_{0}^{N},x^{\alpha}=\prod\limits_{i=1}^{N}%
x_{i}^{\alpha_{i}}$. The partitions in $\mathbb{N}_{0}^{N}$ are denoted by
$\mathbb{N}_{0}^{N,+}$ ($\lambda\in\mathbb{N}_{0}^{N,+}$ if and only if
$\lambda_{1}\geq\lambda_{2}\geq\ldots\geq\lambda_{N}$). The \textit{fermionic}
degree of this monomial is $\#E$ and the \textit{bosonic} degree is
$\left\vert \alpha\right\vert =\sum_{i=1}^{N}\alpha_{i}$. Let $s\mathcal{P}%
_{m}=\mathrm{span}\left\{  x^{\alpha}\phi_{E}:\alpha\in\mathbb{N}_{0}%
^{N},\#E=m\right\}  $. Then using the decomposition $\mathcal{P}%
_{m}=\mathcal{P}_{m,0}\oplus\mathcal{P}_{m,1}$ let%
\begin{align*}
s\mathcal{P}_{m,0}  &  =\mathrm{span}\left\{  x^{\alpha}\psi_{E}:\alpha
\in\mathbb{N}_{0}^{N},E\in\mathcal{E}_{0}\right\}  ,\\
s\mathcal{P}_{m,1}  &  =\mathrm{span}\left\{  x^{\alpha}\eta_{E}:\alpha
\in\mathbb{N}_{0}^{N},E\in\mathcal{E}_{1}\right\}  .
\end{align*}
The symmetric group $\mathcal{S}_{N}$ acts on $s\mathcal{P}_{m}$ by $wp\left(
x,\theta\right)  =p\left(  xw,\theta w\right)  $ (recall $\left(  xw\right)
_{i}=x_{w\left(  i\right)  }$ and $\left(  \theta w\right)  _{i}%
=\theta_{w\left(  i\right)  }$ for $1\leq i\leq N$).

\begin{definition}
The \textit{Dunkl} and \textit{Cherednik-Dunkl} operators are ($1\leq i\leq
N,p\in s\mathcal{P}_{m}$)
\begin{align*}
\mathcal{D}_{i}p\left(  x;\theta\right)   &  :=\frac{\partial p\left(
x;\theta\right)  }{\partial x_{i}}+\kappa\sum_{j\neq i}\frac{p\left(
x;\theta\left(  i,j\right)  \right)  -p\left(  x\left(  i,j\right)
;\theta\left(  i,j\right)  \right)  }{x_{i}-x_{j}}\text{,}\\
\mathcal{U}_{i}p\left(  x;\theta\right)   &  :=\mathcal{D}_{i}\left(
x_{i}p\left(  x;\theta\right)  \right)  -\kappa\sum_{j=1}^{i-1}p\left(
x\left(  i,j\right)  ;\theta\left(  i,j\right)  \right)  \text{.}%
\end{align*}

\end{definition}

The same commutation relations as for the scalar case hold, that is,%
\begin{align*}
\mathcal{D}_{i}\mathcal{D}_{j}  &  =\mathcal{D}_{j}\mathcal{D}_{i}%
,~\mathcal{U}_{i}\mathcal{U}_{j}=\mathcal{U}_{j}\mathcal{U}_{i},~1\leq i,j\leq
N\\
w\mathcal{D}_{i}  &  =\mathcal{D}_{w\left(  i\right)  }w,\forall
w\in\mathcal{S}_{N};~s_{j}\mathcal{U}_{i}=\mathcal{U}_{i}s_{j},~j\neq i-1,i;\\
s_{i}\mathcal{U}_{i}s_{i}  &  =\mathcal{U}_{i+1}+\kappa s_{i},~\mathcal{U}%
_{i}s_{i}=s_{i}\mathcal{U}_{i+1}+\kappa,~\mathcal{U}_{i+1}s_{i}=s_{i}%
\mathcal{U}_{i}-\kappa\text{.}%
\end{align*}
The simultaneous eigenfunctions of $\left\{  \mathcal{U}_{i}\right\}  $ are
called (vector-valued) nonsymmetric Jack polynomials (NSJP). They are the
special case for the $\mathcal{S}_{N}$-representations $\left(  N-m,1^{m}%
\right)  $ and $\left(  N-m+1,1^{m-1}\right)  $ of the polynomials constructed
by Griffeth \cite{G2010} for the complex reflection groups $G\left(
n.p.N\right)  $. For generic $\kappa$ these eigenfunctions form a basis of
$s\mathcal{P}_{m}$ (\textit{generic} means that $\kappa\neq\frac{m}{n}$ where
$m,n\in\mathbb{Z}$ and $1\leq n\leq N$). They have a triangularity property
with respect to the partial order $\rhd$ on compositions, which is derived
from the dominance order:
\begin{align*}
\alpha &  \prec\beta~\Longleftrightarrow\sum_{j=1}^{i}\alpha_{j}\leq\sum
_{j=1}^{i}\beta_{j},~1\leq i\leq N,~\alpha\neq\beta\text{,}\\
\alpha\lhd\beta &  \Longleftrightarrow\left(  \left\vert \alpha\right\vert
=\left\vert \beta\right\vert \right)  \wedge\left[  \left(  \alpha^{+}%
\prec\beta^{+}\right)  \vee\left(  \alpha^{+}=\beta^{+}\wedge\alpha\prec
\beta\right)  \right]  \text{.}%
\end{align*}
The rank function on compositions is involved in the formula for an NSJP.

\begin{definition}
For $\alpha\in\mathbb{N}_{0}^{N},1\leq i\leq N$%
\[
r_{\alpha}\left(  i\right)  =\#\left\{  j:\alpha_{j}>\alpha_{i}\right\}
+\#\left\{  j:1\leq j\leq i,\alpha_{j}=\alpha_{i}\right\}  \text{,}%
\]
then $r_{\alpha}\in\mathcal{S}_{N}\text{.}$
\end{definition}

A consequence is that $r_{\alpha}\alpha=\alpha^{+}$, the \textit{nonincreasing
rearrangement} of $\alpha$, for any $\alpha\in\mathbb{N}_{0}^{N}$ . For
example if $\alpha=\left(  1,2,0,5,4,5\right)  $ then $r_{\alpha}=\left[
5,4,6,1,3,2\right]  $ and $r_{\alpha}\alpha=\alpha^{+}=\left(
5,5,4,2,1,0\right)  $ (recall $w\alpha_{i}=\alpha_{w^{-1}\left(  i\right)  }$
). Also $r_{\alpha}=I$ if and only if $\alpha\in\mathbb{N}_{0}^{N,+}$.

For each $\alpha\in\mathbb{N}_{0}^{N}$ and $E\in\mathcal{E}_{0}$ or
$E\in\mathcal{E}_{1}$ there is a NSJP $J_{\alpha,E}$ with leading term
$x^{\alpha}\left(  r_{\alpha}^{-1}T_{E}\right)  $, that is,
\begin{equation}
J_{\alpha,E}\left(  x;\theta\right)  =x^{\alpha}\left(  r_{\alpha}^{-1}%
T_{E}\right)  +\sum_{\alpha\rhd\beta}x^{\beta}v_{\alpha,\beta,T}\left(
\kappa;\theta\right)  , \label{dNJP}%
\end{equation}
where $v_{\alpha,\beta,T}\left(  \kappa;\theta\right)  \in\mathcal{P}_{m,0}$
or $\mathcal{P}_{m,1}$, respectively. The coefficients of the polynomials
$v_{\alpha,\beta,T}\left(  \kappa;\theta\right)  $ are rational functions of
$\kappa$. Note $r_{\alpha}^{-1}T_{E}\left(  \theta\right)  =T_{E}\left(
\theta r_{\alpha}^{-1}\right)  $. These polynomials satisfy%
\begin{align*}
\mathcal{U}_{i}J_{\alpha,E}  &  =\zeta_{\alpha,E}\left(  i\right)
J_{\alpha,E},\\
\zeta_{\alpha,E}\left(  i\right)   &  :=\alpha_{i}+1+\kappa c\left(
r_{\alpha}\left(  i\right)  ,E\right)  ,~1\leq i\leq N.
\end{align*}
For detailed proofs see \cite{DL2011}.

\begin{example}
$N=4,m=2,~\alpha=\left(  0,1,1,0\right)  ,~E=\{2,3,4\}\in\mathcal{E}_{0},$
$\left[  c\left(  j,E\right)  \right]  _{j=1}^{4}=\left[  1,-2,-1,0\right]  $%
\begin{gather*}
J_{\alpha,E}=\left(  x_{2}x_{3}-\frac{\kappa x_{2}x_{4}}{1-2\kappa}\right)
\left(  -\theta_{1}\theta_{3}+\theta_{1}\theta_{4}-\theta_{3}\theta_{4}\right)
\\
+\frac{\kappa x_{3}x_{4}}{\left(  1-2\kappa\right)  \left(  1+\kappa\right)
}\left\{  \left(  1-\kappa\right)  \theta_{1}\theta_{2}-\left(  1-2\kappa
\right)  \left(  \theta_{1}\theta_{3}-\theta_{2}\theta_{3}\right)
-\kappa\left(  \theta_{1}\theta_{4}-\theta_{2}\theta_{4}\right)  \right\}  ,\\
\zeta_{\alpha,E}=\left[  1-\kappa,2+\kappa,2-2\kappa,1\right]  .
\end{gather*}

\end{example}

We collect formulas for the action of $s_{i}$ on $J_{\alpha,E}$. They will be
expressed in terms of the spectral vector $\zeta_{\alpha,E}=\left[  \alpha
_{i}+1+\kappa c\left(  r_{\alpha}\left(  i\right)  ,E\right)  \right]
_{i=1}^{N}$ and (for $1\leq i<N$)%
\begin{align*}
b_{\alpha,E}\left(  i\right)   &  =\frac{\kappa}{\zeta_{\alpha,E}\left(
i\right)  -\zeta_{\alpha,E}\left(  i+1\right)  }\\
&  =\frac{\kappa}{\alpha_{i}-\alpha_{i+1}+\kappa\left(  c\left(  r_{\alpha
}\left(  i\right)  ,E\right)  -c\left(  r_{\alpha}\left(  i+1\right)
,E\right)  \right)  }.
\end{align*}
The formulas are consequences of the commutation relationships: $s_{j}%
\mathcal{U}_{i}=\mathcal{U}_{i}s_{j}$ for $j<i-1$ and $j>i$; $s_{i}%
\mathcal{U}_{i}s_{i}=\mathcal{U}_{i+1}+\kappa s_{i}$ for $1\leq i<N$. We
examine the action of $s_{i}$ on $J_{\alpha,E}$ with $i<N$. Observe that the
formulas manifest the equation $\left(  s_{i}+b_{\alpha,E}\left(  i\right)
\right)  \left(  s_{i}-b_{\alpha,E}\left(  i\right)  \right)  =1-b_{\alpha
,E}\left(  i\right)  ^{2}$. Suppose that $\alpha_{i}\neq\alpha_{i+1}$,
then\newline(1) $\alpha_{i}<\alpha_{i+1}$ implies%
\begin{align*}
\left(  s_{i}-b_{\alpha,E}\left(  i\right)  \right)  J_{\alpha,E}  &
=J_{s_{i}\alpha,E}\\
\left(  s_{i}+b_{\alpha,E}\left(  i\right)  \right)  J_{s_{i}\alpha,E}  &
=\left(  1-b_{\alpha,E}\left(  i\right)  ^{2}\right)  J_{\alpha,E}.,
\end{align*}
\newline(2) $\alpha_{i}>\alpha_{i+1}$ implies%
\begin{align*}
\left(  s_{i}-b_{\alpha,E}\left(  i\right)  \right)  J_{\alpha,E}  &  =\left(
1-b_{\alpha,E}\left(  i\right)  ^{2}\right)  J_{s_{i}\alpha,E},\\
\left(  s_{i}+b_{\alpha,E}\left(  i\right)  \right)  J_{s_{i}\alpha,E}  &
=J_{\alpha,E},
\end{align*}

Suppose that $\alpha_{i}=\alpha_{i+1}$ then let $j=r_{\alpha}\left(  i\right)
$ (thus $r_{\alpha}\left(  i+1\right)  =j+1$). By definition $b_{\alpha
,E}\left(  i\right)  =\allowbreak\left(  c\left(  j,E\right)  -c\left(
j+1,E\right)  \right)  ^{=1}$. Then\newline(1) $\left\{  j,j+1\right\}
\subset E$ ($b_{\alpha,E}\left(  i\right)  =-1$) implies $s_{i}J_{\alpha
,E}=-J_{\alpha,E}$,\newline(2) $\left\{  j,j+1\right\}  \cap E=\emptyset$
($b_{\alpha,E}\left(  i\right)  =1$) implies $s_{i}J_{\alpha,E}=-J_{\alpha,E}%
$,\newline(3) $\left(  j,j+1\right)  \in E^{C}\times E$ and $E\in
\mathcal{E}_{0}$, or $\left(  j,j+1\right)  \in E\times E^{C}$ and
$E\in\mathcal{E}_{1}$ implies%
\begin{align*}
\left(  s_{i}-b_{\alpha,E}\left(  i\right)  \right)  J_{\alpha,E}  &
=J_{\alpha,s_{j}E}\text{,}\\
\left(  s_{i}+b_{\alpha,E}\left(  i\right)  \right)  J_{\alpha,s_{j}E}  &
=\left(  1-b_{\alpha,E}\left(  i\right)  ^{2}\right)  J_{\alpha,E},
\end{align*}
\newline(4) $\left(  j,j+1\right)  \in E\times E^{C}$ and $E\in\mathcal{E}%
_{0}$, or $\left(  j,j+1\right)  \in E^{C}\times E$ and $E\in\mathcal{E}_{1}$
implies%
\begin{align*}
\left(  s_{i}-b_{\alpha,E}\left(  i\right)  \right)  J_{\alpha,E}  &  =\left(
1-b_{\alpha,E}\left(  i\right)  ^{2}\right)  J_{\alpha,s_{j}E},\\
\left(  s_{i}+b_{\alpha,E}\left(  i\right)  \right)  J_{\alpha,s_{j}E}  &
=J_{\alpha,E}.
\end{align*}

The reason for the difference between the $\mathcal{E}_{0}$ and $\mathcal{E}%
_{1}$ cases is that the directions of inductively defining $T_{E}$ are
opposite. One other construction enables the determination of any
$J_{\alpha,E}$ in a finite number of steps starting from $T_{E_{0}}$ for
$E\in\mathcal{E}_{0}$, or from $T_{E_{1}}$ for $E\in\mathcal{E}_{1}$: this
refers to the affine step. Let $w_{N}=s_{1}s_{2}\cdots s_{N-1}$ (a cyclic
shift) and define $\Psi$ acting on $N$-vectors:%
\[
\Psi\left(  a_{1},a_{2},\ldots,a_{N}\right)  =\left(  a_{2},a_{3},\ldots
,a_{N},a_{1}+1\right)  .
\]
Then $J_{\Psi\alpha,E}=x_{N}w_{N}^{-1}J_{\alpha,E}$ and $\left[  \zeta
_{\Psi\alpha,E}\left(  i\right)  \right]  _{i=1}^{N}=\Psi\left[  \zeta
_{\alpha,E}\left(  i\right)  \right]  _{i=1}^{N}$. By a simple calculation we
find $r_{\Psi_{\alpha}}\left(  i\right)  =r_{\alpha}\left(  i+1\right)  $ for
$1\leq i<N$ and $r_{\Psi_{\alpha}}\left(  N\right)  =r_{\alpha}\left(
1\right)  $, that is, $r_{\Psi_{\alpha}}\left(  i\right)  =r_{\alpha}\left(
w_{N}(i)\right)  $ for all $i$. Furthermore%
\[
x_{N}w_{N}^{-1}J_{\alpha,E}=x_{N}J_{\alpha,E}\left(  x_{N},x_{1},x_{2}%
,\ldots,x_{N-1};\theta_{N},\theta_{1},\ldots,\theta_{N-1}\right)  .
\]
For example, if $\alpha=\left(  2,1,4\right)  $ then $x^{\alpha}=x_{1}%
^{2}x_{2}x_{3}^{4}$ and $x^{\Psi\alpha}=x_{1}x_{2}^{4}x_{3}^{3}$ , also
$r_{\alpha}=\left[  2,3,1\right]  $ and $r_{\psi\alpha}=\left[  3,1,2\right]
$.

In \cite{DL2011} the Yang-Baxter graph is used to organize the transformation
formulas in a directed graph. Here is a simplified description: the nodes are
pairs $\left(  \alpha,E\right)  $, and there are arrows from $\left(
\alpha,E\right)  $ to $\left(  \alpha\Psi,E\right)  $, called \textit{affine
steps}, arrows from $\left(  \alpha,E\right)  $ to $\left(  s_{i}%
\alpha,E\right)  $ when $\alpha_{i}<\alpha_{i+1}$, called \textit{steps}, and
arrows from $\left(  \alpha,E\right)  $ to $\left(  \alpha,s_{i}E\right)  $
when $\left(  i,i+1\right)  \in E\times E^{C}$ and $E\in\mathcal{E}_{0}$, or
$\left(  i,i+1\right)  \in E^{C}\times E$ and $E\in\mathcal{E}_{1}$; the
latter are called \textit{jumps }(jumping from one set $E$ to another). The
graphs for $s\mathcal{P}_{m,0},s\mathcal{P}_{m,1}$ have the roots $\left(
\boldsymbol{0},E_{0}\right)  $ and $\left(  \boldsymbol{0},E_{1}\right)  $ respectively.

The graph makes it possible to define a symmetric bilinear form on
$s\mathcal{P}_{m}$, extending the inner product defined above for
$\mathcal{P}_{m},$ having the following properties ($f,g\in s\mathcal{P}_{m}$)%
\begin{align*}
\left\langle wf,wg\right\rangle  &  =\left\langle f,g\right\rangle
,w\in\mathcal{S}_{N},\\
\left\langle x_{i}f,g\right\rangle  &  =\left\langle f,\mathcal{D}%
_{i}g\right\rangle ,~1\leq i\leq N,\\
\deg f  &  \neq\deg g\implies\left\langle f,g\right\rangle =0.
\end{align*}
where $\deg f$ is the bosonic degree ($\deg x^{\alpha}=\left\vert
\alpha\right\vert $). As a consequence the operators $\mathcal{U}_{i}$ are
self-adjoint for this form and $\left(  \alpha,E\right)  \neq\left(
\beta,F\right)  $ implies $\left\langle J_{\alpha,E},J_{\beta,F}\right\rangle
=0$, because of the eigenvector property. There are two products appearing in
the formulas.

\begin{definition}
\label{defPR}For $\lambda\in\mathbb{N}_{0}^{N,+},\alpha\in\mathbb{N}_{0}^{N},$
$E\in\mathcal{E}_{0}\cup\mathcal{E}_{1}$ and $z=0,1$ let%
\begin{align*}
\mathcal{P}\left(  \lambda,E\right)   &  =\prod\limits_{i=1}^{N}\left(
1+\kappa c\left(  i,E\right)  \right)  _{\lambda_{i}}\prod\limits_{1\leq
i<j\leq N}\prod\limits_{l=1}^{\lambda_{i}-\lambda_{j}}\left(  1-\left(
\frac{\kappa}{l+\kappa\left(  c\left(  i,E\right)  -c\left(  j,E\right)
\right)  }\right)  ^{2}\right)  ,\\
\mathcal{R}_{z}\left(  \alpha,E\right)   &  =\prod\limits_{\substack{1\leq
i<j\leq N\\\alpha_{i}<\alpha_{j}}}\left(  1+\frac{\left(  -1\right)
^{z}\kappa}{\alpha_{j}-\alpha_{i}+\kappa\left(  c\left(  r_{\alpha}\left(
j\right)  ,E\right)  -c\left(  r_{\alpha}\left(  i\right)  ,E\right)  \right)
}\right)  ,
\end{align*}
and let $\mathcal{R}\left(  \alpha,E\right)  =\mathcal{R}_{0}\left(
\alpha,E\right)  \mathcal{R}_{1}\left(  \alpha,E\right)  $ ($\mathcal{R}$ is
for \textquotedblleft rearrangement\textquotedblright).
\end{definition}

\begin{theorem}
\label{Jnorm1}Suppose $\lambda\in\mathbb{N}^{N,+}$ and $E\in\mathcal{E}%
_{0}\cup\mathcal{E}_{1}$ then%
\[
\left\Vert J_{\lambda,E}\right\Vert ^{2}=\left\vert T_{E}\right\vert
^{2}\mathcal{P}\left(  \lambda,E\right)  .
\]

\end{theorem}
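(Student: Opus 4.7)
The plan is induction on $|\lambda|$. The base case $\lambda=\mathbf{0}$ is immediate: the leading-term formula (\ref{dNJP}) gives $J_{\mathbf{0},E}=T_E$ (no composition is strictly dominated by $\mathbf{0}$), while both products in the definition of $\mathcal{P}(\mathbf{0},E)$ are empty, so $\mathcal{P}(\mathbf{0},E)=1$.

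The inductive step uses two norm-change rules along forward edges of the Yang--Baxter graph. The \emph{affine-step rule} states that $\|J_{\Psi\alpha,E}\|^2=\zeta_{\alpha,E}(1)\,\|J_{\alpha,E}\|^2$; I would prove it by rewriting $x_N w_N^{-1}=w_N^{-1}x_1$ (using $w_N(N)=1$ together with the general identity $wx_i=x_{w(i)}w$), applying the $\mathcal{S}_N$-invariance of the form to get $\|J_{\Psi\alpha,E}\|^2=\|x_1 J_{\alpha,E}\|^2$, and then computing $\|x_1 J_{\alpha,E}\|^2=\langle J_{\alpha,E},\mathcal{D}_1 x_1 J_{\alpha,E}\rangle=\langle J_{\alpha,E},\mathcal{U}_1 J_{\alpha,E}\rangle$, since the sum in the definition of $\mathcal{U}_1$ is empty and hence $\mathcal{U}_1=\mathcal{D}_1 x_1$. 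The \emph{sort-step rule} states that $\|J_{s_i\alpha,E}\|^2=(1-b_{\alpha,E}(i)^2)\,\|J_{\alpha,E}\|^2$ whenever $\alpha_i<\alpha_{i+1}$; this follows immediately from $J_{s_i\alpha,E}=(s_i-b_{\alpha,E}(i))J_{\alpha,E}$ and the general calculation recorded in subsection \ref{Tnorms}.

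For the induction step with $|\lambda|>0$, let $k$ be the largest index with $\lambda_k>0$ and take the predecessor partition $\mu=(\lambda_1,\ldots,\lambda_{k-1},\lambda_k-1,0,\ldots,0)$, together with the intermediate compositions $\mu'=(\lambda_k-1,\lambda_1,\ldots,\lambda_{k-1},0,\ldots,0)$ and $\beta=(\lambda_1,\ldots,\lambda_{k-1},0,\ldots,0,\lambda_k)$. One checks directly that $\mu'\xrightarrow{s_1,\ldots,s_{k-1}}\mu$ and $\beta\xrightarrow{s_{N-1},\ldots,s_k}\lambda$ are valid forward sort chains, and that $\Psi\mu'=\beta$. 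Concatenating these steps and applying the two rules yields
\[
\|J_{\lambda,E}\|^2=\zeta_{\mu',E}(1)\cdot\frac{\prod_{\text{chain }\beta\to\lambda}(1-b^2)}{\prod_{\text{chain }\mu'\to\mu}(1-b^2)}\cdot\|J_{\mu,E}\|^2.
\]
By the induction hypothesis $\|J_{\mu,E}\|^2=|T_E|^2\mathcal{P}(\mu,E)$, and it remains to identify the prefactor with $\mathcal{P}(\lambda,E)/\mathcal{P}(\mu,E)$.

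The main obstacle is this combinatorial matching. A direct rank computation gives $r_{\mu'}(1)=k$, so the affine-step factor is $\zeta_{\mu',E}(1)=\lambda_k+\kappa c(k,E)$, matching the new Pochhammer factor $(1+\kappa c(k,E))_{\lambda_k}/(1+\kappa c(k,E))_{\lambda_k-1}$. For the chain $\beta\to\lambda$, the $p$-th swap acts on entries $0$ and $\lambda_k$ at positions $N-p$ and $N-p+1$; tracking ranks gives $r_\gamma(N-p)=N-p+1$ and $r_\gamma(N-p+1)=k$, so the sort-step factor equals $1-\kappa^2/(\lambda_k+\kappa(c(k,E)-c(j,E)))^2$ with $j=N-p+1$, reproducing the new factors (indexed by $i=k$, $j>k$, $l=\lambda_k$) in the double product of $\mathcal{P}(\lambda,E)$. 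The chain $\mu'\to\mu$ analogously yields the factors (indexed by $i<k$, $j=k$, $l=\lambda_i-\lambda_k+1$) that appear extraneously in $\mathcal{P}(\mu,E)$ and cancel against the denominator, completing the identification. Cases with repeated parts in $\lambda$ or with $\lambda_k=1$ are handled by the same rank computations, since $r_{\mu'}(1)=k$ in all such cases.
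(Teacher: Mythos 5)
Your proposal is correct. The paper itself states Theorem \ref{Jnorm1} without proof, deferring to Griffeth \cite{G2010} and Dunkl--Luque \cite{DL2011}, and your argument --- induction along the Yang--Baxter graph, with the affine step contributing $\zeta_{\mu',E}(1)=\lambda_k+\kappa c(k,E)$ (the Pochhammer increment) via $\mathcal{U}_1=\mathcal{D}_1x_1$ and the sort steps contributing the factors $1-b^2$ that match the new terms of the double product --- is exactly the mechanism used in those references; your rank computations $r_{\mu'}(1)=k$, $r_\gamma(N-p+1)=k$, $r_\gamma(N-p)=N-p+1$ and the cancellation of the $\mu'\to\mu$ chain against the extraneous $(i,k)$ factors of $\mathcal{P}(\mu,E)$ all check out.
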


The norm of $J_{\alpha,E}$ uses the auxiliary product.

\begin{theorem}
\label{Jnorm2}Suppose $\alpha\in\mathbb{N}_{0}^{N},$ $E\in\mathcal{E}_{0}%
\cup\mathcal{E}_{1}$ then
\[
\left\Vert J_{\alpha,E}\right\Vert ^{2}=\mathcal{R}\left(  \alpha,E\right)
^{-1}\left\Vert J_{\alpha^{+},E}\right\Vert ^{2}.
\]

\end{theorem}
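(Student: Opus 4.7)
The strategy is to show that the quantity $\mathcal{R}(\alpha,E)\,\|J_{\alpha,E}\|^{2}$ is invariant under every elementary swap $\alpha\mapsto s_{i}\alpha$ with $\alpha_{i}<\alpha_{i+1}$; since $\alpha^{+}$ is reachable from $\alpha$ by such swaps and $\mathcal{R}(\alpha^{+},E)=1$ (an empty product, as the nonincreasing $\alpha^{+}$ admits no $p<q$ with $\alpha^{+}_{p}<\alpha^{+}_{q}$), the claim follows. The proof thus reduces to verifying two parallel identities for adjacent moves: one on the norm side and one on the product side.

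\textbf{Step 1 (norm side).} Assume $\alpha_{i}<\alpha_{i+1}$. By case (1) of the $s_{i}$-action formulas, $J_{s_{i}\alpha,E}=(s_{i}-b_{\alpha,E}(i))J_{\alpha,E}$. Because $J_{\alpha,E}$ and $J_{s_{i}\alpha,E}$ have distinct spectral vectors (coordinates $i,i+1$ swap) and the $\mathcal{U}_{j}$ are self-adjoint, they are orthogonal in the form on $s\mathcal{P}_{m}$. The elementary computation recalled in subsection \ref{Tnorms} (that $\|s_{i}f-bf\|^{2}=(1-b^{2})\|f\|^{2}$ whenever $s_{i}f-bf\perp f$) then yields
\[
\|J_{s_{i}\alpha,E}\|^{2}=\bigl(1-b_{\alpha,E}(i)^{2}\bigr)\|J_{\alpha,E}\|^{2}.
\]

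\textbf{Step 2 (combinatorial side).} I will show that $\mathcal{R}(\alpha,E)=(1-b_{\alpha,E}(i)^{2})\,\mathcal{R}(s_{i}\alpha,E)$ under the same hypothesis. Writing $\zeta=\zeta_{\alpha,E}$, one has $\mathcal{R}(\alpha,E)=\prod_{p<q,\,\alpha_{p}<\alpha_{q}}\bigl(1-\kappa^{2}/(\zeta(q)-\zeta(p))^{2}\bigr)$, and the swap $\alpha\mapsto s_{i}\alpha$ interchanges $\zeta(i)\leftrightarrow\zeta(i+1)$ because $r_{s_{i}\alpha}$ agrees with $r_{\alpha}\circ s_{i}$ when $\alpha_{i}<\alpha_{i+1}$. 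The pair $(i,i+1)$ itself belongs to the ascent set of $\alpha$ but not of $s_{i}\alpha$, contributing exactly the factor $1-\kappa^{2}/(\zeta(i+1)-\zeta(i))^{2}=1-b_{\alpha,E}(i)^{2}$. For a pair $(p,i)$ or $(p,i+1)$ with $p<i$, a short case analysis on where $\alpha_{p}$ lies relative to $\alpha_{i}<\alpha_{i+1}$ (strictly below, equal to one of them, strictly between, or strictly above) shows that for each such $p$ exactly the same multiset of factors $\{1-\kappa^{2}/(\zeta(i)-\zeta(p))^{2},\,1-\kappa^{2}/(\zeta(i+1)-\zeta(p))^{2}\}$ appears in $\mathcal{R}(\alpha,E)$ as in $\mathcal{R}(s_{i}\alpha,E)$ (the position label may switch from $i$ to $i+1$, but the $\zeta$-value attached to that label also switches, so the product is unchanged). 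The symmetric argument handles pairs $(i,q)$ and $(i+1,q)$ with $q>i+1$, and all other pairs are untouched.

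\textbf{Step 3 (induction).} Combining the two steps, $\mathcal{R}(\alpha,E)\,\|J_{\alpha,E}\|^{2}=\mathcal{R}(s_{i}\alpha,E)\,\|J_{s_{i}\alpha,E}\|^{2}$ whenever $\alpha_{i}<\alpha_{i+1}$. Iterating such moves (bubble-sort-style, each strictly decreasing the number of pairs $p<q$ with $\alpha_{p}<\alpha_{q}$) transforms $\alpha$ into $\alpha^{+}$ in finitely many steps. Since $\mathcal{R}(\alpha^{+},E)=1$, this gives $\mathcal{R}(\alpha,E)\,\|J_{\alpha,E}\|^{2}=\|J_{\alpha^{+},E}\|^{2}$, which is the stated identity. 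The only genuinely delicate point is the bookkeeping of Step 2; once one organizes the factors of $\mathcal{R}$ by the unordered pair $\{\zeta(p),\zeta(q)\}$ rather than by positions, the invariance under the transposition becomes transparent and all cases collapse to the single surviving factor $1-b_{\alpha,E}(i)^{2}$.
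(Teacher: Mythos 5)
Your proof is correct and is essentially the argument the paper intends: the paper defers the proof of Theorem \ref{Jnorm2} to \cite{DL2011} and \cite{G2010}, but every ingredient you use --- the step formula $\left(s_{i}-b_{\alpha,E}(i)\right)J_{\alpha,E}=J_{s_{i}\alpha,E}$ for $\alpha_{i}<\alpha_{i+1}$, the orthogonality of distinct $J$'s under the form, and the identity $\left\vert s_{i}f-bf\right\vert^{2}=\left(1-b^{2}\right)\left\vert f\right\vert^{2}$ from the Norms subsection --- is set up in the paper precisely for this induction. Your bookkeeping in Step 2, matching the factors of $\mathcal{R}$ via the swap of spectral-vector entries $\zeta(i)\leftrightarrow\zeta(i+1)$ so that only the $(i,i+1)$ factor $1-b_{\alpha,E}(i)^{2}$ survives, is the standard telescoping and is accurate.
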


These formulas show that the bilinear form is positive-definite for $-\frac
{1}{N}<\kappa<\frac{1}{N}$; the typical term in the product has the form
$\left(  l\geq1\right)  $
\[
\dfrac{\left(  l+\kappa\left(  c\left(  i,E\right)  -c\left(  j,E\right)
+1\right)  \right)  \left(  l+\kappa\left(  c\left(  i,E\right)  -c\left(
j,E\right)  -1\right)  \right)  }{\left(  l+\kappa\left(  c\left(  i,E\right)
-c\left(  j,E\right)  \right)  \right)  ^{2}}%
\]
and $\left\vert c\left(  i,E\right)  -c\left(  j,E\right)  \right\vert \leq
N-1$ (maximum value with the cells $\left\{  \left[  1,N-m\right]  ,\left[
m+1,1\right]  \right\}  $ for $\mathcal{E}_{0}$and with $\left\{  \left[
1,N-m+1\right]  ,\left[  m,1\right]  \right\}  $ for $\mathcal{E}_{1}$; thus
each factor is positive. The formulas are the type-$A$ specialization of
Griffeth's results \cite{G2010}.

\begin{example}
$N=4,m=2,\alpha=\left(  0,1,1,0\right)  $, $E=\{2,3,4\}\in\mathcal{E}_{0},$
$\left[  c\left(  j,E\right)  \right]  _{j=1}^{4}=\left[  1,-2,-1,0\right]  $,
$\zeta_{\alpha,E}=\left[  1-\kappa,2+\kappa,2-2\kappa,1\right]  $
\begin{gather*}
J_{\alpha,E}=\left(  x_{2}x_{3}-\frac{\kappa x_{2}x_{4}}{1-2\kappa}\right)
\left(  -\theta_{1}\theta_{3}+\theta_{1}\theta_{4}-\theta_{3}\theta_{4}\right)
\\
+\frac{\kappa x_{3}x_{4}}{\left(  1-2\kappa\right)  \left(  1+\kappa\right)
}\left\{  \left(  1-\kappa\right)  \theta_{1}\theta_{2}-\left(  1-2\kappa
\right)  \left(  \theta_{1}\theta_{3}-\theta_{2}\theta_{3}\right)
-\kappa\left(  \theta_{1}\theta_{4}-\theta_{2}\theta_{4}\right)  \right\}  ,\\
\left\Vert J_{\alpha,E}\right\Vert ^{2}=\frac{3\left(  1-3\kappa\right)
\left(  1+2\kappa\right)  \left(  1-\kappa\right)  }{\left(  1+\kappa\right)
\left(  1-2\kappa\right)  }.
\end{gather*}

\end{example}

\section{Supersymmetric Polynomials\label{supersym}}

A supersymmetric polynomial of fermionic degree $m$ is a polynomial $p\in
s\mathcal{P}_{m}$ which satisfies $wp=p$ for all $w\in\mathcal{S}_{N};$ the
minimal equivalent condition is $p\left(  xs_{i};\theta s_{i}\right)
=p\left(  x;\theta\right)  $ for $1\leq i<N$. In this section we consider such
polynomials which arise as $\sum\limits_{w\in\mathcal{S}_{N}}wJ_{\alpha,E}$
for some fixed $\alpha,E$. These polynomials are simultaneous eigenfunctions
of the commutative set $\left\{  \sum_{i=1}^{N}\mathcal{U}_{i}^{k}:1\leq k\leq
N\right\}  $. The tableaux $Y_{E}$ are useful for labeling $\mathcal{S}_{N}%
$-orbits of $J_{\alpha,E}$

\begin{definition}
\label{defMaE}For $\alpha\in\mathbb{N}_{0}^{N}$ and $E\in\mathcal{E}_{0}%
\cup\mathcal{E}_{1}$ let $\left\lfloor \alpha,E\right\rfloor $ denote the
tableau obtained from $Y_{E}$ by replacing $i$ by $\alpha_{i}^{+}$ for $1\leq
i\leq N$. Let $\mathcal{M}\left(  \alpha,E\right)  \allowbreak=\mathrm{span}%
\left\{  J_{\beta,F}:\left\lfloor \beta,F\right\rfloor =\left\lfloor
\alpha,E\right\rfloor \right\}  $ .
\end{definition}

Example: let $N=8,m=3,E=\left\{  2,5,7,8\right\}  ,\alpha=\left(
3,5,6,2,2,1,4,4\right)  ,\alpha^{+}=\left(  6,5,4,4,3,2,2,1\right)  $ and%
\[
\left\lfloor \alpha,E\right\rfloor =%
\begin{bmatrix}
1 & 2 & 4 & 4 & 6\\
\circ & 2 & 3 & 5 &
\end{bmatrix}
.
\]

\begin{theorem}
(\cite[Prop. 5.2]{DL2011})Suppose $\alpha\in\mathbb{N}_{0}^{N}$ and
$E\in\mathcal{E}_{0}\cup\mathcal{E}_{1}$, then there is a series of
transformations of the form $as_{i}+b$ mapping $J_{\alpha,E}$ to $J_{\beta,F}$
if and only if $\left\lfloor \beta,F\right\rfloor =\left\lfloor \alpha
,E\right\rfloor $.
\end{theorem}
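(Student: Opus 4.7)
The plan is to exploit the explicit formulas for the action of $s_i$ on $J_{\alpha,E}$ collected above. Transformations of the form $as_i + b$ come in two flavors: \emph{steps} $(s_i \pm b_{\alpha,E}(i))$, applied when $\alpha_i \neq \alpha_{i+1}$, which change $(\alpha,E)$ to $(s_i\alpha, E)$; and \emph{jumps} of the same shape, applied when $\alpha_i = \alpha_{i+1}$ together with the set-adjacency condition on $E$, which change $(\alpha,E)$ to $(\alpha, s_j E)$ with $j = r_\alpha(i)$. I would split the proof into necessity and sufficiency.

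For necessity (the existence of a series forces $\lfloor \beta,F\rfloor = \lfloor \alpha,E\rfloor$) I would verify that each flavor preserves the tableau. A step leaves $\alpha^+$ and $E$ unchanged, so $\lfloor \cdot,\cdot\rfloor$ is untouched. A jump leaves $\alpha$ unchanged and swaps $j \leftrightarrow j+1$ in $E$, exchanging in $Y_E$ the cells carrying the labels $j$ and $j+1$; the identity $\alpha^+_j = \alpha_i = \alpha_{i+1} = \alpha^+_{j+1}$, which follows from $\alpha_i = \alpha_{i+1}$ and $r_\alpha\{i,i+1\} = \{j,j+1\}$, shows that the two exchanged cells carry equal entries, so the underlying tableau is unchanged. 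Composition then extends this to a general series.

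For sufficiency, assume $\lfloor \alpha, E\rfloor = \lfloor \beta, F\rfloor$; then $\alpha^+ = \beta^+$ and $E,F$ lie in the same $\mathcal{E}_z$. I would first bubble-sort $\alpha$ to $\alpha^+$ by repeatedly applying $(s_i - b_{\alpha,E}(i))$ at an index with $\alpha_i < \alpha_{i+1}$, and similarly reduce $(\beta,F)$ to $(\alpha^+,F)$; both reductions use only step transformations of the allowed form. It therefore suffices to connect $(\alpha^+, E)$ to $(\alpha^+, F)$ using only jumps, and since $r_{\alpha^+}$ is the identity such a jump now swaps $j \leftrightarrow j+1$ directly in $E$, and is admissible whenever $\alpha^+_j = \alpha^+_{j+1}$ and exactly one of $j, j+1$ lies in $E$.

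The main obstacle is the combinatorial connectedness claim that the fiber $\{E \in \mathcal{E}_z : \lfloor \alpha^+, E\rfloor = T\}$ over a fixed hook tableau $T$ is joined by such admissible swaps. I would attack this by induction on the number of indices at which the cell of $i$ in $Y_E$ and in $Y_F$ differs: at the smallest such $i$ the two cells carry the same tableau value, hence sit on a common equal-entry block of $\alpha^+$ but on opposite arms of the hook, and so $i$ can be slid from one cell to the other through a chain of consecutive jumps, the opposite-arm condition at each step supplying the required $\#(\{j,j+1\}\cap E)=1$. This strictly reduces the discrepancy count, and induction completes the argument.
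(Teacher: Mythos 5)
The paper does not actually prove this statement: it is quoted verbatim from \cite[Prop.~5.2]{DL2011} and used as a black box, so there is no in-paper argument to compare against. Your proposal supplies a genuine self-contained proof along the standard Yang--Baxter-graph lines, and its overall structure is sound: necessity because steps preserve $(\alpha^{+},E)$ and jumps swap two cells of $Y_{E}$ carrying equal entries of $\alpha^{+}$; sufficiency by sorting both compositions to $\alpha^{+}$ with invertible steps and then connecting the fiber over a fixed tableau by admissible jumps. The connectedness argument is the real content, and your reduction to it is correct.

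Three points in the sketch deserve explicit justification before this could stand as a complete proof. First, the jump at $j=N-1$ is never admissible when it would move $N$ (for $E\in\mathcal{E}_{0}$ one needs $j+1\in E\setminus\{N\}$, since $s_{N-1}E$ would leave $\mathcal{E}_{0}$); you must check your sliding chains never require this swap. They do not, because $N$ occupies the cell $\left[1,1\right]$ in every $Y_{F}$ of the fiber and the counts $\#\bigl(E\cap\lambda^{-1}(v)\bigr)=\#\bigl(F\cap\lambda^{-1}(v)\bigr)$ forced by the common tableau guarantee the element of $E$ to be slid can always be chosen strictly below $N$ --- but this is an argument, not an observation. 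Second, your claim that one slide ``strictly reduces the discrepancy count'' is true but not obvious: moving the label $i$ into place displaces the labels $i+1,\ldots,i+t$ as well, so a priori the count could grow; it does not, because a rank computation shows all of $i,i+1,\ldots,i+t$ already disagreed before the slide. It is cleaner to induct on the smallest disagreeing index, which manifestly increases. Third, running the bubble-sort on $(\beta,F)$ backwards uses the inverse $(1-b^{2})^{-1}(s_{i}+b)$, so you should note $b\neq\pm1$, which holds for generic $\kappa$ in the step case and always in the jump case since $\lvert c(j,E)-c(j+1,E)\rvert\geq2$ there. With these gaps filled, your argument is a correct and welcome substitute for the citation.
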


It is a consequence of the transformation rules that if $\left\lfloor
\beta,F\right\rfloor =\left\lfloor \alpha,E\right\rfloor $ then the spectral
vector $\zeta_{\beta,F}$ is a permutation of $\zeta_{\alpha,E}$. Furthermore
$\mathcal{M}\left(  \alpha,E\right)  $ is an $\mathcal{S}_{N}$-module.

\begin{theorem}
(\cite[Thm. 5.9]{DL2011}) Suppose $\alpha\in\mathbb{N}_{0}^{N}$ and
$E\in\mathcal{E}_{0}\cup\mathcal{E}_{1}$ and $\left\lfloor \alpha
,E\right\rfloor $ is column-strict (the entries in column 1 are strictly
decreasing) then there is a unique symmetric polynomial (up to constant
multiplication) in $\mathcal{M}\left(  \alpha,E\right)  $ otherwise there is
no nonzero $\mathcal{S}_{N}$-invariant.
\end{theorem}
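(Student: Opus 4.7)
The plan is to examine when a linear combination $p = \sum_{(\beta, F) \in \mathcal{O}} c_{\beta, F} J_{\beta, F}$---summed over $\mathcal{O} := \{(\beta, F) : \lfloor \beta, F\rfloor = \lfloor \alpha, E\rfloor\}$---can be $\mathcal{S}_N$-invariant. Because $\{J_{\beta, F}\}_{\mathcal{O}}$ diagonalizes the commuting family $\{\mathcal{U}_k\}$, each equation $s_i p = p$ translates through the intertwining formulas of Section~\ref{Jackpol} into an explicit linear relation on the coefficients $c_{\beta, F}$.

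First I isolate the core obstruction. Suppose $(\beta, F) \in \mathcal{O}$ and an index $i$ satisfy $\beta_i = \beta_{i+1}$ together with $\{r_\beta(i), r_\beta(i+1)\} \subset F$ (the same-column case when $F \in \mathcal{E}_0$; the analogous $\subset F^C$ when $F \in \mathcal{E}_1$). Then $s_i J_{\beta, F} = -J_{\beta, F}$. Moreover $\beta_i = \beta_{i+1}$ forces $s_i \beta = \beta$, and $r_\beta(i), r_\beta(i+1)$ both lying in $F$ forces $s_{r_\beta(i)} F = F$; inspecting the remaining step and jump rules confirms that no other $(\gamma, G) \in \mathcal{O}$ has an $s_i$-image containing $J_{\beta, F}$. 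Matching the $J_{\beta, F}$-coefficient in $s_i p = p$ then forces $c_{\beta, F} = 0$. I then use the step, affine-step, and jump intertwiners---which express neighboring coefficients as nonzero rational multiples of one another---to propagate this vanishing across the Yang-Baxter graph on $\mathcal{O}$. Connectedness of that graph via steps and jumps then yields $p = 0$.

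Next I translate "no $(\beta, F) \in \mathcal{O}$ is obstructed" into the column-strict condition. For every $(\beta, F) \in \mathcal{O}$, column $1$ of $\lfloor \beta, F\rfloor$ carries the multiset $\{\lambda_j : j \in F\}$ with $\lambda := \alpha^+$ (dually for $\mathcal{E}_1$), which equals the column-$1$ multiset of $\lfloor \alpha, E\rfloor$ by the defining property of $\mathcal{O}$. If $\lfloor \alpha, E\rfloor$ is not column-strict, sortedness of $\lambda$ together with the freedom in choosing a representative (jumps to permute $F$, steps to permute $\beta$) produces some $(\beta, F) \in \mathcal{O}$ in which two consecutive positions of $\beta$ have equal values whose ranks are consecutive integers both in $F$, hence exhibiting the obstruction; so $\mathcal{M}(\alpha, E)^{\mathcal{S}_N} = 0$. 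Conversely, column-strictness rules out the same-column obstruction on every node of $\mathcal{O}$, and applying the $\mathcal{S}_N$-symmetrizer to the base NSJP $J_{\lambda, E^*}$ in the spirit of Baker--Forrester \cite{BF1999} yields a nonzero invariant, which is unique up to scalar by the propagation mechanism above.

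The main obstacle will be the propagation step in the jump case: $J_{\beta, F}$ and $J_{\beta, s_j F}$ are related by a $2\times 2$ intertwining block, and showing that vanishing of one coefficient forces the other requires careful bookkeeping---using uniqueness of NSJPs with a prescribed spectral vector---of all nodes of $\mathcal{O}$ contributing to each side of $s_i p = p$. A secondary challenge is the failure direction above: whenever $\lfloor \alpha, E\rfloor$ has a repeated column-$1$ value, one must navigate the orbit via the jump rules to produce a representative where two same-value entries of $\beta$ sit at positions $i, i+1$ whose ranks $r_\beta(i), r_\beta(i+1)$ are consecutive integers lying in $F$.
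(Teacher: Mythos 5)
First, a point of comparison: the paper does not prove this statement at all --- it is imported verbatim from \cite{DL2011} (Thm.\ 5.9) --- so there is no in-paper proof to match against. Your architecture (expand a putative invariant as $\sum c_{\beta,F}J_{\beta,F}$ over the orbit $\mathcal{O}$, convert each $s_ip=p$ into coefficient recurrences via the transformation rules of Section \ref{Jackpol}, identify the nodes with $s_iJ_{\beta,F}=-J_{\beta,F}$ as the obstruction, and propagate along the connected Yang--Baxter graph) is exactly the mechanism behind the cited result and behind the paper's own proof of Theorem \ref{superPthm}, so the strategy is the right one.

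There is, however, a concrete error in your identification of the obstruction for the second family, and it changes the answer. For $F\in\mathcal{E}_1$, column $1$ of $Y_F$ is indexed by $F$ itself together with the corner cell containing $N$ (Definition \ref{defYE1}: cells $[2,1],\dots,[m,1]$ hold the elements of $F$, while row $1$ holds $F^C$), just as column $1$ of $Y_F$ is indexed by $F$ when $F\in\mathcal{E}_0$. Computing with $c(i,F)=-1-s(i,F)$ for $i\in F$ and $c(i,F)=s(i,F^C)$ for $i\notin F$, the antisymmetric eigenvalue $s_iJ_{\beta,F}=-J_{\beta,F}$ (i.e.\ $b_{\beta,F}(i)=-1$, $c(j,F)-c(j+1,F)=-1$ with $j=r_\beta(i)$) occurs for $F\in\mathcal{E}_1$ precisely when $\{j,j+1\}\subset F$, or in the corner case $j=N-1\in F$, $j+1=N$ --- not when $\{j,j+1\}\subset F^C$, which is the same-row case $c(j,F)-c(j+1,F)=+1$ and gives $s_iJ_{\beta,F}=+J_{\beta,F}$, hence no constraint. (The paper's displayed rule (2) carries a sign typo here; if it really gave $-J_{\alpha,E}$ then repeated entries in row $1$ would kill every candidate invariant.) With your version of the $\mathcal{E}_1$ obstruction you would end up testing row-strictness of $\lfloor\alpha,F\rfloor$, which is the criterion for \emph{antisymmetric} invariants (see the duality discussion in Section \ref{further} and \cite{DL2011}, Thm.\ 5.12), so the $\mathcal{E}_1$ half of the theorem would come out wrong. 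Two further gaps to close: (i) the existence half needs an actual nonvanishing argument for the symmetrized polynomial --- e.g.\ the leading-coefficient computation of Lemma \ref{coefGE}, showing the coefficient of $x^{\lambda}T_{E_S}$ in $\sum_{w}wJ_{\lambda^-,E_R}$ equals $\#G_{E_R}\neq0$ --- since applying the symmetrizer could a priori give $0$; and (ii) the propagation requires $1\pm b\neq0$ on every step and jump, which does hold (for steps $b=\kappa/(\text{nonzero integer}+\kappa(\cdots))\neq\pm1$ for generic $\kappa$; for jumps $|c(j,F)-c(j+1,F)|\geq2$ so $|b|\leq\tfrac12$), but should be recorded, since it is exactly where the degenerate corner case above escapes the two-dimensional block analysis.
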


The paper \cite{DLM2003a} defined a superpartition with $N$ parts and
fermionic degree $m$ as an $N$-tuple $\left(  \Lambda_{1},\ldots,\Lambda
_{m};\Lambda_{m+1},\ldots,\Lambda_{N}\right)  $ which satisfies $\Lambda
_{1}>\Lambda_{2}>\cdots>\Lambda_{m}$ and $\Lambda_{m+1}\geq\Lambda_{m+2}%
\geq\cdots\geq\Lambda_{N}$. Suppose $\alpha\in\mathbb{N}_{0}^{N}$ and
$E\in\mathcal{E}_{0}$ and $\left\lfloor \alpha,E\right\rfloor $ is column
strict, then $\Lambda_{i}=\left\lfloor \alpha,E\right\rfloor \left[
m+2-i,1\right]  $ for $1\leq i\leq m$ and $\Lambda_{i}=\left\lfloor
\alpha,E\right\rfloor \left[  1,N+1-i\right]  $ for $m+1\leq i\leq N$, and
also $\Lambda_{m}>\Lambda_{N}.$ Suppose $\alpha\in\mathbb{N}_{0}^{N}$ and
$E\in\mathcal{E}_{1}$ and $\left\lfloor \alpha,E\right\rfloor $ is column
strict, then $\Lambda_{i}=\left\lfloor \alpha,E\right\rfloor \left[
m+1-i,1\right]  $ for $1\leq i\leq m$ and $\Lambda_{i}=\left\lfloor
\alpha,E\right\rfloor \left[  1,N+2-i\right]  $ for $m+1\leq i\leq N$, and
also $\Lambda_{m}\leq\Lambda_{N}$ (because $\Lambda_{m}=\left\lfloor
\alpha,E\right\rfloor \left[  1,1\right]  $ and $\Lambda_{N}=\left\lfloor
\alpha,E\right\rfloor \left[  1,2\right]  $. Thus the inequalities
$\Lambda_{m}>\Lambda_{N}$ and $\Lambda_{m}\leq\Lambda_{N}$ distinguish
$\mathcal{E}_{0}$ from $\mathcal{E}_{1}.$

Suppose $\left\lfloor \alpha,E\right\rfloor $ is column-strict. We will
determine formulas for the supersymmetric polynomial in $\mathcal{M}\left(
\alpha,E\right)  $ and its norm. This material is the specialization of
results in \cite{DL2011}. The idea is based on a technique of Baker and
Forrester \cite{BF1999}. Some auxiliary concepts are required. Consider the
set of $F$ such that $\left\lfloor \alpha,F\right\rfloor =\left\lfloor
\alpha,E\right\rfloor $. Among these there is one minimizing $\mathrm{inv}%
\left(  F\right)  $ and one maximizing $\mathrm{inv}\left(  F\right)  $.

\begin{definition}
Suppose $E\in\mathcal{E}_{0}$ then the root $E_{R}$ and the sink $E_{S}$
satisfy%
\begin{align*}
\mathrm{inv}\left(  E_{R}\right)   &  =\min\left\{  \mathrm{inv}\left(
F\right)  :\left\lfloor \alpha,F\right\rfloor =\left\lfloor \alpha
,E\right\rfloor \right\}  ,\\
\mathrm{inv}\left(  E_{S}\right)   &  =\max\left\{  \mathrm{inv}\left(
F\right)  :\left\lfloor \alpha,F\right\rfloor =\left\lfloor \alpha
,E\right\rfloor \right\}  .
\end{align*}
The root and the sink are produced by minimizing the entries of $F$ in row 1,
respectively minimizing the entries of $F$ in column 1 . For $E\in
\mathcal{E}_{1}$ the definitions of $E_{R}$ and $E_{S}$ are reversed.
\end{definition}

Example: Let $N=10,m=3,E=\left\{  1,4,7,10\right\}  $, $\alpha=\left(
3322221100\right)  $ so that%
\[
Y_{E}=%
\begin{bmatrix}
10 & 9 & 8 & 6 & 5 & 3 & 2\\
\circ & 7 & 4 & 1 &  &  &
\end{bmatrix}
,\left\lfloor \alpha,E\right\rfloor =%
\begin{bmatrix}
0 & 0 & 1 & 2 & 2 & 2 & 3\\
\circ & 1 & 2 & 3 &  &  &
\end{bmatrix}
;
\]
there are $16$ sets $F$ with $\left\lfloor \alpha,F\right\rfloor =\left\lfloor
a,E\right\rfloor $, and $E_{R}=\left\{  2,6,8,10\right\}  ,E_{S}=\left\{
1,3,7,10\right\}  $.

There is an analog of the product $\mathcal{R}_{z}\left(  \alpha,E\right)  $
for content vectors:

\begin{definition}
For $E\in\mathcal{E}_{0},F\in\mathcal{E}_{1}$ and $z=0,1$ let%
\begin{align*}
\mathcal{C}_{z}^{\left(  0\right)  }\left(  E\right)   &  =\prod
\limits_{\substack{1\leq i<j<N\\\left(  i,j\right)  \in E\times E^{C}}}\left(
1+\frac{\left(  -1\right)  ^{z}}{c\left(  i,E\right)  -c\left(  j,E\right)
}\right) \\
\mathcal{C}_{z}^{\left(  1\right)  }\left(  F\right)   &  =\prod
\limits_{\substack{1\leq i<j<N\\\left(  i,j\right)  \in F^{C}\times F}}\left(
1+\frac{\left(  -1\right)  ^{z}}{c\left(  i,F\right)  -c\left(  j,F\right)
}\right)  .
\end{align*}

\end{definition}

\begin{theorem}
\label{superPthm}Suppose $\lambda\in\mathbb{N}_{0}^{N,+},E\in\mathcal{E}%
_{k},k=0$ or $1$, and $\left\lfloor \lambda,E\right\rfloor $ is column-strict
then
\[
p_{\lambda,E}=\sum_{\left\lfloor \alpha,F\right\rfloor \in\mathcal{T}\left(
\lambda,E\right)  }\frac{\mathcal{R}_{1}\left(  \alpha,F\right)  }%
{\mathcal{C}_{1}^{\left(  k\right)  }\left(  F\right)  }J_{\alpha,F}%
\]
is the supersymmetric polynomial in $\mathcal{M}\left(  \alpha,E\right)  $,
unique when the coefficient of $x^{\lambda}$ is $\sum\limits_{\left\lfloor
\lambda,F\right\rfloor \in\mathcal{T}\left(  \lambda,E\right)  }\dfrac
{1}{\mathcal{C}_{1}^{\left(  k\right)  }\left(  F\right)  }T_{F}$.
\end{theorem}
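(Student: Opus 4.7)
The plan is to verify $s_i p_{\lambda,E} = p_{\lambda,E}$ for each $1 \le i < N$; since the simple reflections generate $\mathcal{S}_N$, this suffices. Write
\[
p_{\lambda,E} = \sum c_{\alpha,F}\,J_{\alpha,F},\qquad c_{\alpha,F} := \frac{\mathcal{R}_1(\alpha,F)}{\mathcal{C}_1^{(k)}(F)},
\]
summed over pairs $(\alpha,F)$ with $\lfloor\alpha,F\rfloor = \lfloor\lambda,E\rfloor$. For each fixed $i$, the transformation formulas of Section \ref{Jackpol} partition these terms into two-element orbits (a step or a jump) plus singleton fixed-point orbits on which $s_i J_{\alpha,F} = \pm J_{\alpha,F}$. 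The column-strict hypothesis rules out the antisymmetric singleton sub-case, where $\{j,j+1\}$ both lie in the column-1 part of $F$, since otherwise two equal entries would appear at adjacent cells of column 1 of $\lfloor\lambda,E\rfloor$.

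For a step orbit $\{(\alpha,F),(s_i\alpha,F)\}$ with $\alpha_i<\alpha_{i+1}$ and $b := b_{\alpha,F}(i)$, the relations
\[
s_i J_{\alpha,F} = b\,J_{\alpha,F} + J_{s_i\alpha,F},\qquad s_i J_{s_i\alpha,F} = (1-b^2)J_{\alpha,F} - b\,J_{s_i\alpha,F}
\]
reduce $s_i$-invariance to the single identity $c_{s_i\alpha,F}/c_{\alpha,F} = 1/(1+b)$. With $F$ unchanged the $\mathcal{C}_1^{(k)}$-factor cancels; from Definition \ref{defPR} the $(i,i+1)$-factor of $\mathcal{R}_1(\alpha,F)$ equals $1 - \kappa/[\zeta_{\alpha,F}(i+1) - \zeta_{\alpha,F}(i)] = 1+b$ and no such factor occurs in $\mathcal{R}_1(s_i\alpha,F)$, while all other factors pair off using $r_{s_i\alpha}(k) = r_\alpha(k)$ for $k \notin \{i,i+1\}$ together with the swap at $\{i,i+1\}$; so the ratio is exactly $1/(1+b)$.

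For a jump orbit $\{(\alpha,F),(\alpha,s_jF)\}$ with $\alpha_i = \alpha_{i+1}$, $j = r_\alpha(i)$, and exactly one of $j,j+1$ in $F$, the analogous identity reads $c_{\alpha,s_jF}/c_{\alpha,F} = 1/(1+b)$ with $b = (c(j,F) - c(j+1,F))^{-1}$. The transition $F \to s_jF$ swaps the content values at positions $j,j+1$ and fixes all others (Definition \ref{contdef}); since $\alpha_i = \alpha_{i+1}$ the pair $(i,i+1)$ contributes nothing to $\mathcal{R}_1$, while the factors indexed by $\{(k,i),(k,i+1)\}$ (for $k<i$ with $\alpha_k<\alpha_i$) and $\{(i,\ell),(i+1,\ell)\}$ (for $\ell>i+1$ with $\alpha_\ell>\alpha_i$) pair off under the content swap, yielding $\mathcal{R}_1(\alpha,F) = \mathcal{R}_1(\alpha,s_jF)$. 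The whole ratio therefore comes from $\mathcal{C}_1^{(k)}(F)/\mathcal{C}_1^{(k)}(s_jF)$, in which exactly the $(j,j+1)$-factor changes: the ordered pair $(j,j+1)$ lies in the set indexing $\mathcal{C}_1^{(k)}$ for one of $F, s_jF$ but not the other, and its explicit evaluation using Definition \ref{contdef} produces $1/(1+b)$.

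The coefficient of $x^\lambda$ in $p_{\lambda,E}$ follows because $x^\lambda$ appears in $J_{\alpha,F}$ only when $\alpha = \lambda$, with leading coefficient $T_F$ ($r_\lambda = I$), and $\mathcal{R}_1(\lambda,F) = 1$ vacuously. The main obstacle is the bookkeeping in the jump case: precisely tracking the pairing of $\mathcal{R}_1$-factors under the content swap and isolating the one surviving $\mathcal{C}_1^{(k)}$-factor that embodies $1/(1+b)$. Uniqueness of the symmetric polynomial in $\mathcal{M}(\lambda,E)$ from the preceding theorem then identifies $p_{\lambda,E}$ as the desired supersymmetric polynomial.
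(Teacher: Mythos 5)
Your proposal is correct and follows essentially the same route as the paper: verify $s_{i}$-invariance by splitting the sum into $s_{i}$-orbits (steps, jumps, and singletons, with the antisymmetric singleton excluded by column-strictness), and check that the ratios $\mathcal{R}_{1}(\alpha,F)/\mathcal{C}_{1}^{(k)}(F)$ satisfy the resulting adjacency relations $A(\alpha,F)=(1+b)A(s_{i}\alpha,F)$ and $A(\alpha,F)=(1+b)A(\alpha,s_{j}F)$. Your treatment is in fact slightly more explicit than the paper's on two points it leaves implicit — the invariance $\mathcal{R}_{1}(\alpha,F)=\mathcal{R}_{1}(\alpha,s_{j}F)$ under a jump and the identification of the leading coefficient of $x^{\lambda}$ — so there is nothing to add.
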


\begin{proof}
We show that $s_{i}p_{\lambda,E}=p_{\lambda,E}$ for each $i<N$. The space
$\mathcal{M}\left(  \alpha,E\right)  $ decomposes as a sum of one and two
dimensional spaces invariant under the action of $s_{i}$. This leads to
adjacency relations for the coefficients in $\sum_{\left(  \alpha,F\right)
}A\left(  \alpha,F\right)  J_{\alpha,F}$. There are 3 cases (1) $\alpha
_{i}<\alpha_{i+1}$ (2) $\alpha_{i}=\alpha_{i+1}$, $\left\vert c\left(
r_{\alpha}\left(  i\right)  ,F\right)  -c\left(  r_{\alpha}\left(  i+1\right)
,F\right)  \right\vert \geq2$, (3) $\alpha_{i}=\alpha_{i+1}$, $c\left(
r_{\alpha}\left(  i\right)  ,F\right)  -c\left(  r_{\alpha}\left(  i+1\right)
,F\right)  =1$. The case $\alpha_{i}=\alpha_{i+1}$, $c\left(  r_{\alpha
}\left(  i\right)  ,F\right)  -c\left(  r_{\alpha}\left(  i+1\right)
,F\right)  =-1$ does not occur because $\left\lfloor \lambda,E\right\rfloor $
is column-strict. In case (3) $s_{i}J_{\alpha,F}=J_{\alpha,F}$. Now suppose
$\alpha_{i}<\alpha_{i+1}$ and $b_{\alpha,F}\left(  i\right)  =\dfrac{\kappa
}{\xi_{i}\left(  \alpha,F\right)  -\xi_{i+1}\left(  \alpha,F\right)  }$, so
that $\dfrac{\kappa}{\xi_{i}\left(  s_{i}\alpha,F\right)  -\xi_{i+1}\left(
s_{i}\alpha,F\right)  }=-b_{\alpha,F}\left(  i\right)  $. The transformation
formula $\left(  s_{i}-b_{\alpha,E}\left(  i\right)  \right)  J_{\alpha
,F}=J_{s_{i}\alpha,F}$ implies $A\left(  \alpha,F\right)  =\left(
1+b_{\alpha,F}\left(  i\right)  \right)  A\left(  s_{i}\alpha,F\right)  $.
Combine this with%
\[
\frac{\mathcal{R}_{z}\left(  \alpha,F\right)  }{\mathcal{R}_{z}\left(
s_{i}\alpha,F\right)  }=1-\left(  -1\right)  ^{z}b_{\alpha,F}\left(  i\right)
\]
(which follows from the product for $a$ having the extra pair $\left(
i,i+1\right)  $ with $\alpha_{i}<\alpha_{i+1}$) leads to%
\[
\frac{A\left(  s_{i}\alpha,F\right)  }{\mathcal{R}_{1}\left(  s_{i}%
\alpha,F\right)  }=\frac{A\left(  \alpha,F\right)  }{\mathcal{R}_{1}\left(
\alpha,F\right)  }=A\left(  \lambda,E\right)  ,
\]
since there is a series of steps transforming $\alpha$ to $\lambda=\alpha^{+}$.

Suppose $\alpha_{i}=\alpha_{i+1}$ and $j=r_{\alpha}\left(  i\right)  ,$then
let $b_{F}\left(  j\right)  =\left(  c\left(  j,F\right)  -c\left(
j+1,F\right)  \right)  ^{-1}$. If $E\in\mathcal{E}_{0}$ suppose $\left(
j,j+1\right)  \in F^{C}\times\left(  F\backslash\left\{  N\right\}  \right)  $
or if $E\in\mathcal{E}_{1}$ then suppose $\left(  j,j+1\right)  \in
F\times\left(  F^{C}\backslash\left\{  N\right\}  \right)  $; in both cases
$\left(  s_{i}-b_{\alpha,F}\left(  j\right)  \right)  J_{\alpha,F}%
=J_{\alpha,s_{j}F}$. This implies $A\left(  \alpha,F\right)  =\left(
1+b_{F}\left(  j\right)  \right)  A\left(  \alpha,s_{j}F\right)  $. Combine
with ($k=0,1$)
\[
\frac{\mathcal{C}_{z}^{\left(  k\right)  }\left(  s_{j}E\right)  }%
{\mathcal{C}_{z}^{\left(  k\right)  }\left(  E\right)  }=1-\left(  -1\right)
^{z}b_{E}\left(  j\right)
\]
to obtain%
\[
\frac{A\left(  \alpha,F\right)  }{A\left(  \alpha,s_{j}F\right)  }%
=1+b_{E}\left(  j\right)  =\frac{\mathcal{C}_{1}^{\left(  k\right)  }\left(
s_{j}F\right)  }{\mathcal{C}_{1}^{\left(  k\right)  }\left(  F\right)  },
\]
for $E\in\mathcal{E}_{k}$. So every coefficient in $\sum_{\left(
\alpha,F\right)  }A\left(  \alpha,F\right)  J_{\alpha,F}$ can be expressed in
terms of $A\left(  \lambda,E\right)  ,$ which is now the unique undetermined constant.
\end{proof}

Besides the supersymmetry property the polynomial $p_{\lambda,E}$ satisfies
\[
\sum_{j=1}^{N}\mathcal{U}_{j}^{s}p_{\lambda,E}=\sum_{j=1}^{N}\left(
\lambda_{j}+1+\kappa c\left(  i,E_{R}\right)  \right)  ^{s}p_{\lambda,E}%
\]
for $s\geq1$; this relation holding for $1\leq s\leq N$ together with
supersymmetry defines $p$ (up to a multiplicative constant).

Next we describe the process for computing the norm of $p_{\lambda,E}.$ Recall
that $\left\Vert J_{\lambda,E}\right\Vert ^{2}=\left\vert T_{E}\right\vert
^{2}\mathcal{P}\left(  \lambda,E\right)  $ and $\left\Vert J_{\alpha
,E}\right\Vert ^{2}=\left\vert T_{E}\right\vert ^{2}\mathcal{P}\left(
\alpha^{+},E\right)  \left(  \mathcal{R}_{0}\left(  \alpha,E\right)
\mathcal{R}_{1}\left(  \alpha,E\right)  \right)  ^{-1}$ (Theorem
\ref{Jnorm2}).. If $\left\lfloor \lambda,F\right\rfloor =\left\lfloor
\lambda,E\right\rfloor $ then $\mathcal{P}\left(  \lambda,F\right)
=\mathcal{P}\left(  \lambda,E\right)  $ (some of the factors in the product
may be permuted). To determine $\left\Vert p\right\Vert ^{2}$ fix some
$\left(  \alpha,F\right)  \in\left\lfloor \lambda,E_{R}\right\rfloor $ then
$\sum\limits_{w\in\mathcal{S}_{N}}wJ_{\alpha,F}=cp_{\lambda,E}$ for some
constant $c$ (because there is only one invariant in $\mathcal{M}\left(
\alpha,E\right)  $). For now suppose $E\in\mathcal{E}_{0}$. Then%
\begin{align}
c\left\Vert p_{\lambda,E}\right\Vert ^{2}  &  =\sum\limits_{w\in
\mathcal{S}_{N}}\left\langle wJ_{\alpha,F},p_{\lambda,E}\right\rangle
=N!\left\langle J_{\alpha,F},p_{\lambda,E}\right\rangle \label{cpsq}\\
&  =N!\frac{\mathcal{R}_{1}\left(  \alpha,F\right)  }{\mathcal{C}_{1}^{\left(
0\right)  }\left(  F\right)  }\left\Vert J_{\alpha,F}\right\Vert ^{2}%
=N!\frac{\mathcal{R}_{1}\left(  \alpha,F\right)  }{\mathcal{C}_{1}^{\left(
0\right)  }\left(  F\right)  }\frac{\left\vert F\right\vert ^{2}%
\mathcal{P}\left(  \lambda,F\right)  }{\mathcal{R}_{1}\left(  \alpha,F\right)
\mathcal{R}_{0}\left(  \alpha,F\right)  }\nonumber\\
&  =N!\mathcal{P}\left(  \lambda,E_{R}\right)  \left(  m+1\right)
\frac{\mathcal{C}_{0}^{\left(  0\right)  }\left(  F\right)  }{\mathcal{R}%
_{0}\left(  \alpha,F\right)  }.\nonumber
\end{align}
The constant $c$ can be determined for $\alpha=\lambda^{-}$ (the
non-decreasing rearrangement of $\lambda$) and $F=E_{R}$. Let $w_{0}%
=r_{\lambda^{-}}$ so that $w_{0}\lambda_{-}=\lambda.$ By the triangular
property of $\vartriangleright$ and the minimal property of $\lambda^{-}$ it
follows that $J_{\lambda^{-},E_{R}}=x^{\lambda^{-}}w_{0}^{-1}T_{E_{R}}%
+\sum_{\beta\vartriangleleft\lambda^{-}}x^{\beta}v_{\lambda^{-},\beta,E_{R}%
}\left(  \kappa;\theta\right)  $ (see formula (\ref{dNJP})) where $\beta
^{+}\neq\lambda$ ($\beta^{+}\prec\lambda$). Thus $x^{\lambda}$ appears in
$\sum\limits_{w\in\mathcal{S}_{N}}wJ_{\lambda^{-},E_{R}}$ exactly when
$w=w_{1}w_{0}$ and $w_{1}\in G_{\lambda}$, the stabilizer of $\lambda$, and so
$\sum\limits_{w\in G_{\lambda}}wT_{E_{R}}$ becomes the relevant quantity.

\begin{lemma}
\label{coefGE}The coefficient of $T_{E_{S}}$ in $\sum\limits_{w\in G_{\lambda
}}wT_{E_{R}}$ is $\#G_{E_{R}}$ , the order of the stabilizer of $T_{E_{R}}$.
\end{lemma}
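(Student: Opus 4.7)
The plan is to combine orbit--stabilizer with a triangularity property of the action of $G_\lambda$ on the basis $\{T_F\}$; I treat $E\in\mathcal{E}_0$, the $\mathcal{E}_1$ case being analogous after interchanging root and sink. The key preliminary is that every element of $G_{E_R}:=\{w\in G_\lambda:wE_R=E_R\}$ fixes $T_{E_R}$. Column-strictness of $\lfloor\lambda,E_R\rfloor$ forces $E_R$ to meet each block $B$ of $\lambda$ in at most one element, and $\mathrm{inv}$-minimality of $E_R$ pins that element to be $\max B$ whenever it exists. Hence $G_{E_R}=\prod_B S(B\setminus E_R)$ is a Young subgroup generated by simple reflections $s_i$ with $\{i,i+1\}\subset E_R^C$, each of which places $i$ and $i+1$ in the same row of $Y_{E_R}$ and so acts by $s_iT_{E_R}=T_{E_R}$. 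Grouping the sum by $G_{E_R}$-cosets then gives
\[
\sum_{w\in G_\lambda}wT_{E_R}\;=\;|G_{E_R}|\sum_{F\in G_\lambda\cdot E_R}w_FT_{E_R}
\]
for any coset representatives $w_F\in G_\lambda$ with $w_FE_R=F$.

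I choose $w_F$ concretely as a composition of $\mathrm{inv}$-increasing jumps: within each block $B$, move the marked element from $\max B$ down to $F\cap B$ via $s_{\max B-1},s_{\max B-2},\ldots$, taking the product over blocks (which commute). At each step the current set $F'$ has $i\notin F'$ and $i+1\in F'$, so the transformation rule gives $s_iT_{F'}=T_{s_iF'}+bT_{F'}$ (with $b=(c(i,F')-c(i+1,F'))^{-1}$) and $\mathrm{inv}$ rises by one; in particular $w_F$ has length $\mathrm{inv}(F)-\mathrm{inv}(E_R)$. Induction on this length then yields the triangularity
\[
w_FT_{E_R}\;=\;T_F+\sum_{\substack{H\in G_\lambda\cdot E_R\\ \mathrm{inv}(H)<\mathrm{inv}(F)}}c_{F,H}\,T_H,
\]
since in the inductive step the leading factor $s_{i_k}T_{F'}=T_F+bT_{F'}$ supplies $T_F$ with coefficient $1$, while $s_{i_k}$ applied to any lower-inv term from the previous stage produces only $T$'s of $\mathrm{inv}<\mathrm{inv}(F)$.

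Since by definition $E_S$ is the unique element of $G_\lambda\cdot E_R$ of maximal $\mathrm{inv}$, the $T_{E_S}$-coefficient of $w_FT_{E_R}$ equals $\delta_{F,E_S}$; summing over $F$ gives $1$, and multiplication by $|G_{E_R}|$ produces the claimed coefficient. The main technical point is the triangularity, which rests on $\mathrm{inv}$-increasing jumps raising $\mathrm{inv}$ by exactly one and inserting the new leading term with coefficient $1$.
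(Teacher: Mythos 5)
Your proposal is correct and follows essentially the same route as the paper's proof: both rest on the facts that the stabilizer of $E_R$ in $G_{\lambda}$ fixes $T_{E_R}$ and that a chain of $\mathrm{inv}$-raising simple reflections sends $T_{E_R}$ to $T_F$ plus lower-$\mathrm{inv}$ terms with leading coefficient $1$, so only the coset carrying $E_R$ to $E_S$ contributes to the $T_{E_S}$-coefficient. Your coset decomposition and explicit induction along the jump chain merely organize more completely the paper's interval-by-interval count, which is carried out there under the simplifying assumption of a single case-(2) interval.
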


\begin{proof}
The stabilizer of $\lambda$ is a direct product of symmetric groups of
intervals ($\mathcal{S}\left[  a,b\right]  =\left\{  w\in\mathcal{S}%
_{N}:w\left(  i\right)  =i,i<a,i>b\right\}  $), thus if $\lambda_{a-1}%
>\lambda_{a}=\lambda_{a+1}=\cdots=\lambda_{b}>\lambda_{b+1}$ then
$\mathcal{S}\left[  a,b\right]  $ is a factor of $G_{\lambda}$. There are two
possibilities for such an interval (1) $\left\{  a,a+1,\ldots,b\right\}  $ are
all in row 1 of $Y_{E_{R}}$ then $w\in\mathcal{S}\left[  a,b\right]  $ implies
$wT_{E_{R}}=T_{E_{R}}$ (and $wJ_{\lambda,E_{R}}=J_{\lambda,E_{R}})$ (2) one of
$a,a+1,\ldots,b$ is in column 1 (excluding $\left[  1,1\right]  $) of
$Y_{E_{R}}$. In case (2) $b$ is in column 1 for $E_{R}$ and $a$ is in column 1
for $E_{S}$ (reversed for $E\in\mathcal{E}_{1}$).

For simplification of the argument assume there is only one interval in case
(2): then by the transformation laws (Theorem \ref{TEexist})
\[
s_{a}s_{a+1}\cdots s_{b-1}T_{E_{R}}=T_{E_{S}}+\sum\left\{  b_{E}%
T_{E}:\mathrm{inv}\left(  E_{R}\right)  \leq\mathrm{inv}\left(  E\right)
<\mathrm{inv}\left(  E_{R}\right)  \right\}
\]
and $w\left(  b\right)  =a$ implies $w=s_{a}s_{a+1}\cdots s_{b-1}w_{1}$ where
$w_{1}\left(  b\right)  =b$ , that is, there are $\left(  b-a\right)  !$
permutations taking $b$ to $a$. In general the coefficient of $T_{E_{S}}$ in
$\sum\limits_{w\in G_{\lambda}}wT_{E_{R}}$ is a product to which the intervals
in case (1) or case (2)  contribute $\left(  b_{k}-a_{k}+1\right)  !,\left(
b_{k}-a_{k}\right)  !$ respectively. The product is exactly $\#G_{E_{R}}$ .
\end{proof}

\begin{theorem}
Suppose $p_{\lambda,E}$ is as defined in Theorem \ref{superPthm} and $E_{R}%
\in\mathcal{E}_{k}$ with $k=0,1$ then%
\[
\left\Vert p_{\lambda,E}\right\Vert ^{2}=\nu_{k}\frac{N!}{\#G_{E_{R}}}%
\frac{\mathcal{C}_{0}^{\left(  k\right)  }\left(  E_{R}\right)  \mathcal{P}%
\left(  \lambda,E_{R}\right)  }{\mathcal{R}_{0}\left(  \lambda^{-}%
,E_{R}\right)  \mathcal{C}_{1}^{\left(  k\right)  }\left(  E_{S}\right)  },
\]
with $\nu_{0}=m+1,\nu_{1}=N-m+1.$
\end{theorem}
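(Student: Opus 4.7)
The plan is to combine the identity (\ref{cpsq}) with a determination of the constant $c$ via leading--coefficient comparison, specialized to the most convenient representative $(\alpha,F)=(\lambda^{-},E_{R})$.

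First I would start from (\ref{cpsq}), which for arbitrary $(\alpha,F)\in\mathcal{T}(\lambda,E)$ and $E\in\mathcal{E}_{0}$ already yields
\[
c\left\Vert p_{\lambda,E}\right\Vert ^{2}=N!\,\mathcal{P}(\lambda,E_{R})(m+1)\frac{\mathcal{C}_{0}^{(0)}(F)}{\mathcal{R}_{0}(\alpha,F)},
\]
after using $\left\Vert J_{\alpha,F}\right\Vert^2=|T_F|^2\mathcal{P}(\lambda,F)/(\mathcal{R}_0\mathcal{R}_1)$ (Theorem \ref{Jnorm2}), $|T_{E_0}|^2=m+1$, $\mathcal{P}(\lambda,F)=\mathcal{P}(\lambda,E_{R})$, and the cancellation of $\mathcal{R}_{1}(\alpha,F)$. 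Specializing to $F=E_R$ and $\alpha=\lambda^{-}$ gives the numerator and denominator matching the stated formula, modulo identifying $c=\#G_{E_R}\,\mathcal{C}_{1}^{(0)}(E_S)$.

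Next I would determine $c$ by matching the coefficient of $x^{\lambda}T_{E_{S}}$ on both sides of $\sum_{w\in\mathcal{S}_{N}}wJ_{\lambda^{-},E_{R}}=c\,p_{\lambda,E}$. Using (\ref{dNJP}), the leading term of $J_{\lambda^{-},E_{R}}$ is $x^{\lambda^{-}}(w_{0}^{-1}T_{E_{R}})$ with $w_{0}=r_{\lambda^{-}}$ and $w_{0}\lambda^{-}=\lambda$. Since $wx^{\lambda^{-}}=x^{w\lambda^{-}}$, the exponent $w\lambda^{-}$ equals $\lambda$ exactly when $w=w_{1}w_{0}$ with $w_{1}\in G_{\lambda}$, while lower $x$-monomials in the triangular expansion have $\beta^{+}\prec\lambda$ and cannot contribute. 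Therefore the coefficient of $x^{\lambda}$ in $\sum_{w}wJ_{\lambda^{-},E_{R}}$ equals
\[
\sum_{w_{1}\in G_{\lambda}}w_{1}(w_{0}\cdot w_{0}^{-1}T_{E_{R}})=\sum_{w_{1}\in G_{\lambda}}w_{1}T_{E_{R}},
\]
whose $T_{E_{S}}$-component is $\#G_{E_{R}}$ by Lemma \ref{coefGE}. On the right-hand side, Theorem \ref{superPthm} gives the $x^{\lambda}T_{E_{S}}$ coefficient of $p_{\lambda,E}$ as $1/\mathcal{C}_{1}^{(0)}(E_{S})$. Equating yields $c=\#G_{E_{R}}\,\mathcal{C}_{1}^{(0)}(E_{S})$, and substituting into (\ref{cpsq}) produces
\[
\left\Vert p_{\lambda,E}\right\Vert ^{2}=(m+1)\frac{N!}{\#G_{E_{R}}}\frac{\mathcal{C}_{0}^{(0)}(E_{R})\,\mathcal{P}(\lambda,E_{R})}{\mathcal{R}_{0}(\lambda^{-},E_{R})\,\mathcal{C}_{1}^{(0)}(E_{S})}.
\]

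For $E\in\mathcal{E}_{1}$, the only changes are: $|T_{E_{1}}|^{2}=N-m+1$ replaces $m+1$, giving $\nu_{1}=N-m+1$; the products $\mathcal{C}_{z}^{(1)}$ replace $\mathcal{C}_{z}^{(0)}$ throughout; and the roles of $E_{R}$ and $E_{S}$ in the intervals appearing in Lemma \ref{coefGE} are interchanged (as noted after the lemma), which preserves the final formula since $\#G_{E_R}$ counts the same product $\prod(b_k-a_k+1)!\prod(b_k-a_k)!$. The argument is otherwise identical, running through the analogue of (\ref{cpsq}) for $\mathcal{E}_{1}$ obtained by replacing $\mathcal{C}_{z}^{(0)}$ with $\mathcal{C}_{z}^{(1)}$.

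The main obstacle is the bookkeeping in the leading--coefficient step: one must be careful that the triangular remainder in (\ref{dNJP}) genuinely cannot contribute to the $x^{\lambda}$ coefficient (using that $\lambda^{-}$ is the minimum of its $\mathcal{S}_{N}$-orbit under $\prec$, so any $\beta\lhd\lambda^{-}$ satisfies $\beta^{+}\prec\lambda$), and that in $\sum_{w_{1}\in G_{\lambda}}w_{1}T_{E_{R}}$ only the intervals of type (2) in the proof of Lemma \ref{coefGE} create $T_{E_{S}}$ after a product of adjacent transpositions, the remaining $T_{E}$ summands having strictly smaller $\mathrm{inv}$ and hence orthogonal to $T_{E_{S}}$ when pairing with $p_{\lambda,E}$'s $T_{E_{S}}$-component. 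Once this is secured, the rest is bookkeeping among $\mathcal{P}$, $\mathcal{R}_{z}$, and $\mathcal{C}_{z}^{(k)}$.
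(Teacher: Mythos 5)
Your proposal is correct and follows essentially the same route as the paper: both start from the identity (\ref{cpsq}) specialized to $(\lambda^{-},E_{R})$, and both pin down the constant $c$ by comparing the coefficient of $x^{\lambda}T_{E_{S}}$ on the two sides of $\sum_{w}wJ_{\lambda^{-},E_{R}}=c\,p_{\lambda,E}$, using Lemma \ref{coefGE} on the left and the normalization from Theorem \ref{superPthm} on the right to get $c=\#G_{E_{R}}\,\mathcal{C}_{1}^{(k)}(E_{S})$. The extra care you take with the triangularity of (\ref{dNJP}) and with $\lambda^{-}$ being $\prec$-minimal in its orbit is exactly the justification the paper leaves implicit.
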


\begin{proof}
From formula (\ref{cpsq}) we find%
\begin{align*}
cp_{\lambda,E}  &  =\sum_{w\in\mathcal{S}_{N}}wJ_{\lambda^{-},E_{R}}%
=c\sum_{\left\lfloor \alpha,F\right\rfloor \in\mathcal{T}\left(
\lambda,E\right)  }\frac{\mathcal{R}_{1}\left(  \alpha,F\right)  }%
{\mathcal{C}_{1}^{\left(  k\right)  }\left(  F\right)  }J_{\alpha,F}\\
c\left\Vert p_{\lambda,E}\right\Vert ^{2}  &  =\nu_{k}N!\mathcal{P}\left(
\lambda,E_{R}\right)  \frac{\mathcal{C}_{0}^{\left(  k\right)  }\left(
E_{R}\right)  }{\mathcal{R}_{0}\left(  \lambda^{-},E_{R}\right)  }.
\end{align*}
In the first line the equation for the coefficient of $x^{\lambda}T_{E_{S}}$
is $\#G_{E_{R}}=\dfrac{c}{\mathcal{C}_{1}^{\left(  k\right)  }\left(
E_{S}\right)  }$ (By Lemma \ref{coefGE}). Dividing the second line by
$c=\left(  \#G_{E_{R}}\right)  \mathcal{C}_{1}^{\left(  k\right)  }\left(
E_{S}\right)  $ leads to the stated formula for $\left\Vert p_{\lambda
,E}\right\Vert ^{2}$.
\end{proof}

\begin{example}
$N=4,$ $\lambda=(2,1,1,0),E=\left\{  2,3,4\right\}  \in\mathcal{E}_{0}$
(isotype $\left(  2,1,1\right)  $)%
\begin{align*}
\left\lfloor \lambda,E\right\rfloor  &  =%
\begin{bmatrix}
0 & 1 & \\
\circ & 1 & 2
\end{bmatrix}
,\\
p_{\lambda,E}  &  =\theta_{1}\theta_{2}\left(  x_{1}-x_{2}\right)  \left\{
\left(  x_{1}x_{2}+x_{3}x_{4}\right)  \left(  x_{3}+x_{4}\right)  -2x_{3}%
x_{4}\left(  x_{1}+x_{2}\right)  \right\}  +\ldots\\
&  =\left(  \theta_{1}\theta_{2}+\theta_{1}\theta_{3}-2\theta_{1}\theta
_{4}+\theta_{2}\theta_{4}+\theta_{3}\theta_{4}\right)  x_{1}^{2}x_{2}%
x_{3}+\ldots
\end{align*}
and%
\begin{align*}
\left\Vert p_{\lambda,E}\right\Vert ^{2}  &  =\ast\left(  1-2\kappa\right)
\left(  2-3\kappa\right)  \left(  1-4\kappa\right) \\
\sum_{i=1}^{4}\mathcal{U}_{i}^{2}p_{\lambda,E}  &  =6\left(  \kappa
^{2}-2\kappa+3\right)  p_{\lambda,E}.
\end{align*}

\end{example}

There is a simplification for the ratio $\dfrac{\mathcal{P}\left(
\lambda,E_{R}\right)  }{\mathcal{R}_{0}\left(  \lambda^{-},E_{R}\right)  }$ by
use of
\[
\mathcal{R}_{0}\left(  \lambda^{-},E_{R}\right)  =\prod
\limits_{\substack{1\leq\ell<k\leq N\\\lambda_{\ell}>\lambda_{k}}}\left(
1+\frac{\kappa}{\lambda_{\ell}-\lambda_{k}+\kappa\left(  c\left(  \ell
,E_{R}\right)  -c\left(  k,E_{R}\right)  \right)  }\right)  ,
\]
which follows from Definition \ref{defPR} by setting $i=w^{-1}\left(
k\right)  ,j=w^{-1}\left(  \ell\right)  $ where $w=r_{\lambda^{-}}$; then
$\lambda_{j}^{-}=\lambda_{\ell},\lambda_{i}^{-}=\lambda_{k}$. It is possible
that $i<j$ does not imply $k>\ell$ (if $\lambda_{i}=\lambda_{j}$) but
$\lambda_{\ell}>\lambda_{k}$ does imply $\ell<k$ and $i<j$. In the product
replace $\ell$ by $i$ and $k$ by $j$, and cancel out part of the factor in
$\mathcal{P}\left(  \lambda,E_{R}\right)  $ for $l=\lambda_{i}-\lambda_{j}$ .
Introduce a utility product to allow a more concise formula:%
\[
\Pi_{0}\left(  n,d\right)  :=\left(  1-\frac{\kappa}{n+d\kappa}\right)
\prod\limits_{l=1}^{n-1}\left(  1-\left(  \frac{\kappa}{l+d\kappa}\right)
^{2}\right)  ,
\]
then%
\[
\dfrac{\mathcal{P}\left(  \lambda,E_{R}\right)  }{\mathcal{R}_{0}\left(
\lambda^{-},E_{R}\right)  }=\prod\limits_{i=1}^{N}\left(  1+\kappa c\left(
i,E_{R}\right)  \right)  _{\lambda_{i}}\prod\limits_{\substack{1\leq i<j\leq
N\\\lambda_{i}>\lambda_{j}}}\Pi_{0}\left(  \lambda_{i}-\lambda_{j},c\left(
i,E_{R}\right)  -c\left(  j,E_{R}\right)  \right)  .
\]
As Griffeth \cite{G2010} pointed out this is a complicated product that
conceals possible cancellations of numerator and denominator factors, because
the content vector need not have consecutive entries differing by $1,$as
happens in the scalar case where $\left[  c\left(  i,T\right)  \right]
_{i=1}^{N}=\left[  N-1,N-2,\ldots,1,0\right]  $. There are some special cases
of low degree where this ratio reduces to a polynomial in $\kappa$; to be
discussed in the next section.

\section{Poincar\'{e} Series and Generators\label{Pseries}}

This section concerns combinatorial aspects and generating supersymmetric
polynomials by multiplying basis elements by symmetric polynomials. We showed
that the supersymmetric polynomials in $s\mathcal{P}_{m}$ are labeled by
column-strict tableaux of shape $\left(  N-m,1^{m}\right)  $ and $\left(
N-m+1,1^{m-1}\right)  .$Stanley \cite[p.379]{S1999} found the formula for the
number $t_{n}$ of tableaux of a given shape $\tau$, with entries
non-decreasing in each row and each column, and with the sum of the entries
equalling $n$%
\[
\sum_{n=0}^{\infty}t_{n}q^{n}=H_{\tau}\left(  q\right)  =\prod\limits_{\left(
i,j\right)  \in\tau}\left(  1-q^{h\left(  i,j;\tau\right)  }\right)  ^{-1},
\]
where the product is over the boxes of the Ferrers diagram of $\tau$ and
$h\left(  i,j;\tau\right)  $ is the length of the hook at $\left(  i,j\right)
$. To modify this series for column-strict tableaux add $j-1$ to each entry of
row $j$ for $1\leq j\leq\ell\left(  \tau\right)  $, thus adding $n_{\tau
}:=\sum_{i=1}^{\ell\left(  \tau\right)  }\left(  j-1\right)  \tau_{j}$ to $n$.
Recall the shifted $q$-factorial: $\left(  a;q\right)  _{n}=\prod
\limits_{i=1}^{n}\left(  1-aq^{i-1}\right)  $. For $\tau=\left(
N-m,1^{m}\right)  $ it is easy to find $H_{\tau}\left(  q\right)  =\left\{
\left(  1-q^{N}\right)  \left(  q;q\right)  _{m}\left(  q;q\right)
_{N-m-1}\right\}  ^{-1}$ and $n_{\tau}=\frac{m\left(  m+1\right)  }{2}$.
Replace $m$ by $m-1$ to obtain these quantities for $\left(  N-m+1,1^{m-1}%
\right)  $. Thus the number of supersymmetric polynomials of isotype $\left(
N-m,1^{m}\right)  $ and bosonic degree $n$ is the coefficient of $q^{n}$ in%
\[
q^{m\left(  m+1\right)  /2}\left\{  \left(  1-q^{N}\right)  \left(
q;q\right)  _{m}\left(  q;q\right)  _{N-m-1}\right\}  ^{-1}%
\]
For example let $\tau=\left(  2,1,1\right)  $ then the series begins
$q^{3}+2q^{4}+4q^{5}+6q^{6}+10q^{7}+\ldots$.

Suppose there are supersymmetric polynomials $p_{1},p_{2},\cdots,p_{k}%
\in\mathcal{P}_{m,0}$ of bosonic degrees $n_{1},n_{2},\ldots,n_{k}$ such that
every supersymmetric polynomial $f\in s\mathcal{P}_{m,0}$ has a unique
expansion $f\left(  x;\theta\right)  =\sum_{i=1}^{k}h_{i}\left(  x\right)
p_{i}\left(  x;\theta\right)  $ where each $h_{i}$ is symmetric ($h_{i}\left(
xw\right)  =h_{i}\left(  x\right)  $ for all $w\in\mathcal{S}_{N}$) . Define
the generating function $Q_{N,m}\left(  q\right)  =\sum_{i=1}^{k}q^{n_{i}}$
then%
\[
Q_{N,m}\left(  q\right)  \prod\limits_{i=1}^{N}\left(  1-q^{i}\right)
^{-1}=q^{m\left(  m+1\right)  /2}\left\{  \left(  1-q^{N}\right)  \left(
q;q\right)  _{m}\left(  q;q\right)  _{N-m-1}\right\}  ^{-1},
\]
because $\left(  q;q\right)  _{N}^{-1}$ is the Poincar\'{e} series for
symmetric polynomials in $N$ variables, graded by degree. Thus%
\begin{equation}
Q_{N,m}\left(  q\right)  =q^{m\left(  m+1\right)  /2}\frac{\left(  q;q\right)
_{N-1}}{\left(  q;q\right)  _{m}\left(  q;q\right)  _{N-m-1}}=q^{m\left(
m+1\right)  /2}%
%TCIMACRO{\QATOPD{[}{]}{N-1}{m}}%
%BeginExpansion
\genfrac{[}{]}{0pt}{}{N-1}{m}%
%EndExpansion
_{q}. \label{QNmseries}%
\end{equation}
The Gaussian coefficient is defined by $%
%TCIMACRO{\QATOPD{[}{]}{a}{b}}%
%BeginExpansion
\genfrac{[}{]}{0pt}{}{a}{b}%
%EndExpansion
_{q}=\frac{\left(  q;q\right)  _{a}}{\left(  q;q\right)  _{b}\left(
q;q\right)  _{a-b}}$. Considering the decomposition $s\mathcal{P}%
_{m}=s\mathcal{P}_{m,0}\oplus s\mathcal{P}_{m,1}$ we find%
\[
Q_{N,m}\left(  q\right)  +Q_{N,m-1}\left(  q\right)  =q^{m\left(  m-1\right)
/2}%
%TCIMACRO{\QATOPD{[}{]}{N}{m}}%
%BeginExpansion
\genfrac{[}{]}{0pt}{}{N}{m}%
%EndExpansion
_{q}.
\]
Since $\lim\limits_{q\rightarrow1}%
%TCIMACRO{\QATOPD{[}{]}{a}{b}}%
%BeginExpansion
\genfrac{[}{]}{0pt}{}{a}{b}%
%EndExpansion
_{q}=\binom{a}{b}$ we note that $Q_{N,m}\left(  1\right)  +Q_{N,m-1}\left(
1\right)  =\binom{N}{m}=\dim\mathcal{P}_{m}$; this is a consequence of a
reciprocity theorem for representations. With a similar calculation we find
the Poincar\'{e} series for the supersymmetric polynomials in $s\mathcal{P}%
_{m,1}$ to be $q^{m\left(  m-1\right)  /2}\left\{  \left(  q;q\right)
_{m}\left(  q;q\right)  _{N-m}\right\}  ^{-1}$.

This information leads to determining the set $\left\{  p_{j}\left(
x;\theta\right)  \right\}  $ of supersymmetric polynomials which are
eigenfunctions of $\sum_{i=1}^{N}\mathcal{U}_{i}^{s}$ for $s\geq1$ and which
are generators as described above. The idea is to involve two other meanings
of the Gaussian coefficient. Fix two integers $k,\ell\geq1$ and consider two
(well-known) counting problems:

\begin{enumerate}
\item $A\left(  k,\ell,n\right)  =\#\left\{  \left(  j_{1},\ldots,j_{\ell
}\right)  :0\leq j_{1}\leq j_{2}\leq\cdots\leq j_{\ell}\leq k,\sum_{i=1}%
^{\ell}j_{i}=n\right\}  $ (the number of partitions of $n$ with length
$\leq\ell$), and let $a\left(  k,\ell\right)  =\sum_{n=0}^{k\ell}A\left(
k,\ell,n\right)  q^{n}$;

\item $B\left(  k,\ell,n\right)  =\#\left\{  F\subset\left\{  1,2,\ldots
,k+\ell\right\}  :\#F=\ell,\mathrm{inv}\left(  E\right)  =n\right\}  $, and
$b\left(  k,\ell\right)  =\sum_{n=0}^{k\ell}B\left(  k,\ell,n\right)  q^{n}$.
\end{enumerate}

Then $a\left(  k,\ell\right)  =%
%TCIMACRO{\QATOPD{[}{]}{k+\ell}{k}}%
%BeginExpansion
\genfrac{[}{]}{0pt}{}{k+\ell}{k}%
%EndExpansion
_{q}=b\left(  k,\ell\right)  $. We sketch a proof for the first equation and
set up a bijection to prove the second. Break up the sum for $a\left(
k,\ell\right)  $ by summing over the values of $j_{\ell}$: \thinspace$A\left(
k,\ell,n\right)  =\sum_{s=0}^{k}A\left(  s,\ell-1,n-s\right)  $; this implies
$a\left(  k,\ell\right)  =\sum_{s=0}^{k}q^{s}a\left(  s,\ell-1\right)  $ and
$a\left(  k,\ell\right)  -a\left(  k-1,\ell\right)  =q^{k}a\left(
k,\ell-1\right)  $. Induction on $m+k$ together with $a\left(  k,0\right)
=1=a\left(  0,\ell\right)  $ proves that $a\left(  k,\ell\right)  =%
%TCIMACRO{\QATOPD{[}{]}{k+\ell}{k}}%
%BeginExpansion
\genfrac{[}{]}{0pt}{}{k+\ell}{k}%
%EndExpansion
_{q}$.

For the bijection let $F=\left\{  i_{1},i_{2},\ldots,i_{\ell}\right\}  $ with
$i_{1}<\cdots<i_{l}$ and $\mathrm{inv}\left(  E\right)  =n$, then for $1\leq
u\leq\ell$ define $j_{u}=\#\left\{  v\in F^{C}:v>i_{\ell+1-u}\right\}  $; thus
$0\leq\cdots\leq$ $j_{u}\leq j_{u+1}\leq\cdots\leq k$ and $\sum_{u=1}^{\ell
}j_{u}=n$. In the other direction given $\left[  j_{i}\right]  _{i=1}^{\ell}$
define $i_{u}=k+u-j_{l+1-u}$ for $1\leq u\leq\ell$. This implies
$i_{u}<i_{u+1}$. Let%
\begin{align*}
F_{u}  &  =\left\{  s\in F^{C}:s>i_{u}\right\}  =\left\{  i_{u}+1,\ldots
,k+\ell\right\}  \backslash\left\{  i_{u+1},\ldots,i_{\ell}\right\}  ,\\
\#F_{u}  &  =\left(  k+\ell-1-i_{u}\right)  -\left(  \ell-u\right)
=k+u-i_{u}=j_{\ell+1-u}.
\end{align*}
Thus $\mathrm{inv}\left(  F\right)  =\sum_{u=1}^{\ell}j_{u}$ , and this proves
the bijection.

In the present situation the set $F=\left\{  Y_{E}\left[  1,i\right]
\right\}  _{i=2}^{N-m-1}$ where $E\in\mathcal{E}_{0}$ (or $2\leq i\leq N-m$
for $E\in\mathcal{E}_{1}$). Suppose the supersymmetric polynomial is in
$\mathcal{M}\left(  \lambda,E\right)  $ (Definition \ref{defMaE}) with
$E\in\mathcal{E}_{0}$; the minimal condition for a column-strict tableau is
$\left\lfloor \lambda,E\right\rfloor \left[  i,1\right]  =i-1$ for $1\leq
i\leq m$ and $\left\lfloor \lambda,E\right\rfloor \left[  1,i\right]
\leq\left\lfloor \lambda,E\right\rfloor \left[  1,i+1\right]  $ for $1\leq
i\leq N-m-1$. Imposing the additional condition that $\left\lfloor
\lambda,E\right\rfloor \left[  1,i\right]  \leq m$ leads to the situation
discussed above, namely a one-to-one correspondence between sets $E$ in
$\mathcal{E}_{0}$ and $\left(  N-m\right)  $-tuples $\left[  0,j_{1}%
,\ldots,j_{N-m-1}\right]  $ such that $\mathrm{inv}\left(  E^{C}\right)
=\sum_{u=1}^{\ell}j_{u}$. There is an analogous statement for $\mathcal{E}%
_{1}$.

\begin{example}
Let $N=8,m=3,\lambda=\left(  3,2,2,2,1,1,1,0\right)  $ and%
\[
\left\lfloor \lambda,E\right\rfloor =%
\begin{bmatrix}
0 & 1 & 1 & 2 & 2\\
\circ & 1 & 2 & 3 &
\end{bmatrix}
\]
then $k=3,\ell=4,\ell~F=\left\{  3+1-2,3+2-2,3+3-1,3+4-1\right\}  =\left\{
2,3,5,6\right\}  $ and $E=\left\{  1,4,7,8\right\}  $. Thus%
\[
Y_{E}=%
\begin{bmatrix}
8 & 6 & 5 & 3 & 2\\
\circ & 7 & 4 & 1 &
\end{bmatrix}
\]
and $\mathrm{inv}\left(  E^{C}\right)  =6$ ($=12-\mathrm{inv}\left(  E\right)
$). Note that $E=E_{S}$, the sink.
\end{example}

The set $E$ arising this way is the sink $E_{S}$ for $\mathcal{E}_{0}$, and
the root $E_{R}$ for $\mathcal{E}_{1}$. There are as many tableaux
$\left\lfloor \lambda,E\right\rfloor $ satisfying $\left\lfloor \lambda
,E\right\rfloor \left[  i,1\right]  =i-1$ for $1\leq i\leq m$ and
$\left\lfloor \lambda,E\right\rfloor \left[  1,i\right]  \leq m$ as generators
of the supersymmetric polynomials (over the ring of symmetric polynomials) in
$\mathcal{P}_{m,0}$; the degrees also match up with the generating function
(\ref{QNmseries}). It seems plausible that these are generators (there is no
nontrivial linear relation with symmetric polynomial coefficients),
considering that multiplying one of them by a nontrivial symmetric polynomial
produces a polynomial containing $x_{1}^{m+1}$ but more is needed to prove
this. In the next section we look at a minimal (in the sense of the dominance
order on partitions) subset of the polynomials.

\section{Norms of certain minimal polynomials\label{mininorm}}

This section concerns the norms of the supersymmetric polynomial
$p_{\lambda,E}\in\mathcal{M}\left(  \lambda,E\right)  $ where $\left[
\lambda,E\right]  \left[  i,1\right]  =i-1$ for $1\leq i\leq m+1$ and $\left[
\lambda,E\right]  \left[  1,j\right]  \leq m$ for $1\leq j\leq M$ (with
$M=N-m$). To facilitate computation with $\left\lfloor \lambda,E_{R}%
\right\rfloor $ use $\mu_{i}$ for row 1, and $\widetilde{\mu}_{i}$ for column
1 as described by%
\begin{equation}%
\begin{bmatrix}
\widetilde{\mu}_{m+1} & \mu_{M-1} & \mu_{M-2} & \ldots & \mu_{3} & \mu_{2} &
\mu_{1}\\
& \widetilde{\mu}_{m} & \widetilde{\mu}_{m-1} & \ldots & \widetilde{\mu}_{2} &
\widetilde{\mu}_{1} &
\end{bmatrix}
. \label{munote}%
\end{equation}
Thus $\left[  \mu_{i}\right]  _{i=1}^{M-1}$ and $\left[  \widetilde{\mu}%
_{i}\right]  _{i=1}^{m+1}$ are both partitions, and the latter is strictly
decreasing. (This is essentially the same as the superpartition notation
except for $\widetilde{\mu}_{m+1}$.) The associated content values are
$c_{i}=M-i$ for $1\leq i\leq M-1$ and $\widetilde{c}_{i}=i-m-1$ for $1\leq
i\leq m+1$. Pick parameters $s,k$ with $0\leq s\leq m-1$ and $1\leq k\leq M-2$
and define $\left\lfloor \lambda,E\right\rfloor $ by $\widetilde{\mu}%
_{i}=m+1-i$ for $1\leq i\leq m+1$, and $\mu_{i}=s+1$ for $1\leq i\leq k$,
$\mu_{i}=s$ for $k+1\leq i\leq M-1$. We will prove a denominator-free formula
for $\left\Vert p_{\lambda,E}\right\Vert ^{2}$. Heuristically this is possible
since $\lambda$ is $\prec$-minimal among $\lambda^{\prime}$ with the same
column 1 and the same sum of entries in row 1 (for example if $M=5$ and the
row-total is $6$ then $\left(  2,2,1,1\right)  $ is $\prec$-minimal). The
formula is%
\begin{align*}
\left\Vert p_{\lambda,E}\right\Vert ^{2}  &  =\ast s!\prod\limits_{i=1}%
^{k-1}\left(  1+i\kappa\right)  \prod\limits_{j=1}^{M-k-2}\left(
1+j\kappa\right)  _{s}\prod\limits_{l=M-k-1}^{M-2}\left(  2+l\kappa\right)
_{s}\\
&  \times\prod\limits_{i=2}^{m}\left(  1-i\kappa\right)  _{i-1}\left(
1-\kappa N\right)  _{m-s-1}\left(  m-s+1-\kappa\left(  m+1\right)  \right)
_{s}\\
&  \times\left(  m-s-\kappa\left(  N-k\right)  \right)  .
\end{align*}
The asterisk $\ast$ stands for $\nu_{k}\dfrac{N\mathcal{C}_{0}^{\left(
k\right)  }\left(  E_{R}\right)  }{\#G_{E_{R}}\mathcal{C}_{1}^{\left(
k\right)  }\left(  E_{S}\right)  }$, the part of the formula independent of
$\kappa$. When $k=0$ so that $\mu_{i}=s$ for $1\leq i\leq M-1$ the formula
reduces to
\[
\ast s!\prod\limits_{j=1}^{M-2}\left(  1+j\kappa\right)  _{s}\prod
\limits_{i=2}^{m}\left(  1-i\kappa\right)  _{i-1}\left(  1-\kappa N\right)
_{m-s}\left(  m-s+1-\kappa\left(  m+1\right)  \right)  _{s}.
\]
There are $\left\vert \widetilde{\mu}\right\vert $ factors of the form
$\left(  a-b\kappa\right)  $ and $\left\vert \mu\right\vert -\mu_{1}$ factors
$\left(  a+b\kappa\right)  $ with $a,b\geq1$. The following are used to
compute telescoping products:

\begin{lemma}
\label{pi0prod}Suppose $u\leq v$, $n\geq1$ and $a$ is arbitrary then%
\[
\prod\limits_{i=u}^{v}\Pi_{0}\left(  n,a+i\right)  =\frac{\left(
1+\kappa\left(  a+u-1\right)  \right)  _{n}\left(  1+\kappa\left(
a+v+1\right)  \right)  _{n-1}}{\left(  1+\kappa\left(  a+u\right)  \right)
_{n-1}\left(  1+\kappa\left(  a+v\right)  \right)  _{n}}.
\]

\end{lemma}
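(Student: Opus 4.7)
The plan is to rewrite $\Pi_{0}(n,d)$ as a ratio of four shifted factorials, substitute $d = a+i$, and observe that the product telescopes.

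First I would expand the definition. The factor outside the product is
\[
1 - \frac{\kappa}{n+d\kappa} = \frac{n+(d-1)\kappa}{n+d\kappa},
\]
while each factor inside factors as
\[
1 - \left(\frac{\kappa}{l+d\kappa}\right)^{2} = \frac{\bigl(l+(d-1)\kappa\bigr)\bigl(l+(d+1)\kappa\bigr)}{(l+d\kappa)^{2}}.
\]
Using $\prod_{l=1}^{n-1}(l+c\kappa) = (1+c\kappa)_{n-1}$ and combining the prefactor with the Pochhammer symbols for $(d-1)\kappa$ and $d\kappa$, I would conclude
\[
\Pi_{0}(n,d) \;=\; \frac{(1+(d-1)\kappa)_{n}\,(1+(d+1)\kappa)_{n-1}}{(1+d\kappa)_{n}\,(1+d\kappa)_{n-1}}.
\]

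Now substitute $d = a+i$. Setting $C_{i} := (1+(a+i)\kappa)_{n}$ and $D_{i} := (1+(a+i)\kappa)_{n-1}$, the numerator factor $(1+(a+i-1)\kappa)_{n} = C_{i-1}$ and $(1+(a+i+1)\kappa)_{n-1} = D_{i+1}$, so
\[
\prod_{i=u}^{v}\Pi_{0}(n,a+i) \;=\; \prod_{i=u}^{v}\frac{C_{i-1}\,D_{i+1}}{C_{i}\,D_{i}} \;=\; \frac{C_{u-1}}{C_{v}}\cdot\frac{D_{v+1}}{D_{u}},
\]
since both $\prod (C_{i-1}/C_{i})$ and $\prod (D_{i+1}/D_{i})$ telescope. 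Re-expanding $C_{u-1},C_{v},D_{u},D_{v+1}$ gives exactly the claimed right-hand side.

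The only step requiring care is the first one: ensuring that the prefactor $(n+(d-1)\kappa)/(n+d\kappa)$ is correctly absorbed to promote $(1+(d-1)\kappa)_{n-1}$ and $(1+d\kappa)_{n-1}$ to the length-$n$ Pochhammer symbols $(1+(d-1)\kappa)_{n}$ and $(1+d\kappa)_{n}$. Once that bookkeeping is done, there is no genuine obstacle: the lemma is a pure telescoping identity, and no inductive argument or deeper combinatorics is needed.
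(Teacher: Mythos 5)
Your proposal is correct and is essentially the paper's own argument: both rewrite each factor of $\Pi_{0}(n,d)$ as a ratio of linear terms and telescope the product over $i$, the only difference being that you assemble $\Pi_{0}(n,a+i)$ into a single ratio of four Pochhammer symbols before telescoping, while the paper telescopes the $l=n$ prefactor and the $1\leq l\leq n-1$ factors separately and then multiplies. The bookkeeping you flag (absorbing $(n+(d-1)\kappa)/(n+d\kappa)$ to promote the length-$(n-1)$ symbols to length $n$) is done correctly.
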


\begin{proof}
The first part of the $i$-product (where $l=n$) equals%
\[
\prod\limits_{i=u}^{v}\left(  \frac{n+\kappa\left(  a+i-1\right)  }%
{n+\kappa\left(  a+i\right)  }\right)  =\frac{n+\kappa\left(  a+u-1\right)
}{n+\kappa\left(  a+v\right)  }%
\]
and the second part $\left(  1\leq l\leq n-1\right)  $ equals%
\begin{align*}
&  \prod\limits_{\ell=1}^{n-1}\prod\limits_{i=u}^{v}\frac{\left(  \ell
+\kappa\left(  a+i-1\right)  \right)  \left(  \ell+\kappa\left(  a+i+1\right)
\right)  }{\left(  \ell+\kappa\left(  a+i\right)  \right)  ^{2}}\\
&  =\prod\limits_{\ell=1}^{n-1}\frac{\left(  \ell+\kappa\left(  a+u-1\right)
\right)  \left(  \ell+\kappa\left(  a+v+1\right)  \right)  }{\left(
\ell+\kappa\left(  a+u\right)  \right)  \left(  \ell+\kappa\left(  A+v\right)
\right)  }\\
&  =\frac{\left(  1+\kappa\left(  a+u-1\right)  \right)  _{n-1}\left(
1+\kappa\left(  a+v+1\right)  \right)  _{n-1}}{\left(  1+\kappa\left(
a+u\right)  \right)  _{n-1}\left(  1+\kappa\left(  a+v\right)  \right)
_{n-1}}..
\end{align*}
Multiply the two parts together.
\end{proof}

\begin{lemma}
\label{prodjj}Let $n\geq1$ then%
\[
\prod\limits_{j=1}^{n}\Pi_{0}\left(  j,-j\right)  =\frac{1}{n\left(
1-\kappa\right)  }\frac{\left(  1-\kappa\left(  n+1\right)  \right)  _{n}%
}{\left(  1-\kappa n\right)  _{n-1}}.
\]

\end{lemma}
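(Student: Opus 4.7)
The plan is to prove the identity by induction on $n$, with the base case $n=1$ handled by direct computation: $\Pi_0(1,-1) = 1 - \kappa/(1-\kappa) = (1-2\kappa)/(1-\kappa)$, which matches the right-hand side.

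For the inductive step, assume the formula holds at level $n-1$ and compute $\Pi_0(n,-n)$ from the definition. The leading factor gives
\[
1 - \frac{\kappa}{n-n\kappa} = \frac{n-(n+1)\kappa}{n(1-\kappa)},
\]
while the product over $l=1,\dots,n-1$ factors as
\[
\prod_{l=1}^{n-1}\frac{(l-(n+1)\kappa)(l-(n-1)\kappa)}{(l-n\kappa)^2}
= \frac{(1-(n+1)\kappa)_{n-1}\,(1-(n-1)\kappa)_{n-1}}{\bigl((1-n\kappa)_{n-1}\bigr)^{2}}.
\]
The key combinatorial observation is that the scalar factor $n-(n+1)\kappa$ is precisely the last entry of the Pochhammer symbol $(1-(n+1)\kappa)_n$, so that $(1-(n+1)\kappa)_{n-1}\cdot(n-(n+1)\kappa) = (1-(n+1)\kappa)_n$. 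This collapses $\Pi_0(n,-n)$ to
\[
\Pi_0(n,-n) = \frac{1}{n(1-\kappa)} \cdot \frac{(1-(n+1)\kappa)_n\,(1-(n-1)\kappa)_{n-1}}{\bigl((1-n\kappa)_{n-1}\bigr)^{2}}.
\]

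Multiplying this by the inductive hypothesis for $\prod_{j=1}^{n-1}\Pi_0(j,-j)$ and simplifying uses the single identity
\[
\frac{(1-(n-1)\kappa)_{n-1}}{(1-(n-1)\kappa)_{n-2}} = (n-1)(1-\kappa),
\]
which is again the trick of absorbing the terminal factor of a Pochhammer. After this cancellation, the factor $(n-1)(1-\kappa)$ eats up one of the $(1-\kappa)$ in the denominator and the $(n-1)$ in the denominator from the inductive constant, leaving the target form $\frac{1}{n(1-\kappa)}\,\frac{(1-\kappa(n+1))_n}{(1-\kappa n)_{n-1}}$.

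The computation is almost entirely routine bookkeeping of shifted factorials; the main obstacle is keeping track of which index shifts line up, since each $\Pi_0(j,-j)$ has both the length $n=j$ and the shift $d=-j$ varying with $j$, which prevents a direct application of Lemma \ref{pi0prod} (which handles the opposite situation of fixed $n$ and varying $d$). Accordingly, the induction must be performed by hand rather than by invoking the previous lemma.
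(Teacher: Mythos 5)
Your proof is correct and follows essentially the same route as the paper: induction on $n$, direct expansion of the new factor $\Pi_0(n,-n)$ into the form $\frac{(l-(n+1)\kappa)(l-(n-1)\kappa)}{(l-n\kappa)^2}$, absorption of the terminal Pochhammer entries, and telescoping against the inductive hypothesis. The only difference is cosmetic (you step from $n-1$ to $n$ while the paper steps from $n$ to $n+1$), and your closing remark that Lemma \ref{pi0prod} does not apply here matches the paper, which likewise proves this lemma by a standalone induction.
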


\begin{proof}
Proceed by induction. The formula is true for $n=1$ since $\Pi_{0}\left(
1,-1\right)  =\frac{1-2\kappa}{1-\kappa.}$. Suppose the formula is valid for
$n$ and evaluate%
\begin{align*}
\Pi_{0}\left(  n+1,-n-1\right)   &  =\left(  1-\frac{\kappa}{\left(
n+1\right)  \left(  1-\kappa\right)  }\right)  \prod\limits_{l=1}^{n}%
\frac{\left(  l-\kappa n\right)  \left(  l-\kappa\left(  n+2\right)  n\right)
}{\left(  \ell-\kappa\left(  n+1\right)  \right)  ^{2}}\\
&  =\frac{\left(  n+1-\left(  n+2\right)  \kappa\right)  \left(  1-\kappa
n\right)  _{n}\left(  1-\kappa\left(  n+2\right)  \right)  _{n}}{\left(
n+1\right)  \left(  1-\kappa\right)  \left(  1-\kappa\left(  n+1\right)
\right)  _{n}^{2}}\\
&  =\frac{n\left(  1-\kappa n\right)  _{n-1}\left(  1-\kappa\left(
n+2\right)  \right)  _{n+1}}{\left(  n+1\right)  \left(  1-\kappa\left(
n+1\right)  \right)  _{n}^{2}}.
\end{align*}
This telescopes with the product over $1\leq j\leq n$.
\end{proof}

\begin{lemma}
\label{prodFr}Suppose $n\geq1$ and $F\left(  r\right)  $ is a function of $r$
then%
\[
\prod\limits_{r=0}^{n-1}\frac{\left(  1+F\left(  r\right)  \right)  _{n-r}%
}{\left(  1+F\left(  r+1\right)  \right)  _{n-1-r}}=\left(  1+F\left(
0\right)  \right)  _{n}.
\]

\end{lemma}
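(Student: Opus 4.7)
The plan is to treat Lemma \ref{prodFr} as a purely formal telescoping identity; no structural input from the Jack polynomial theory is needed, since $F$ is allowed to be completely arbitrary. First I would verify the base case $n=1$ directly: the product has a single factor $(1+F(0))_1/(1+F(1))_0 = 1+F(0) = (1+F(0))_1$, which matches the right-hand side.

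The main step is the observation that for every $r$ with $0 \le r \le n-2$, the denominator of the $r$-th factor and the numerator of the $(r+1)$-th factor literally coincide: the former is $(1+F(r+1))_{n-1-r}$, the latter is $(1+F(r+1))_{n-(r+1)}$, and these two Pochhammer symbols are identical because $n-1-r = n-(r+1)$. Writing the whole product as one big fraction and cancelling these matched pairs, every factor vanishes except the numerator at $r=0$, which is $(1+F(0))_n$, and the denominator at $r=n-1$, which is $(1+F(n))_0 = 1$ by the empty-product convention. The identity follows immediately.

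A slightly more structured presentation is via induction on $n$. Peeling off the $r=0$ factor gives a product of $(1+F(0))_n/(1+F(1))_{n-1}$ with $\prod_{r=1}^{n-1}(1+F(r))_{n-r}/(1+F(r+1))_{n-1-r}$. Reindexing $r \mapsto r-1$ and introducing the shifted function $G(r) := F(r+1)$, the trailing product becomes the $(n-1)$-case of the lemma applied to $G$, which by the inductive hypothesis equals $(1+G(0))_{n-1} = (1+F(1))_{n-1}$; this cancels the denominator that was peeled off, leaving $(1+F(0))_n$.

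Since the argument is purely algebraic manipulation of Pochhammer symbols, there is no real obstacle. This lemma is a packaging result whose purpose is to be combined with Lemmas \ref{pi0prod} and \ref{prodjj} in order to collapse the alternating products arising in the denominator-free formula for $\|p_{\lambda,E}\|^{2}$ stated at the beginning of the section.
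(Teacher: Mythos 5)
Your proof is correct: the paper states Lemma \ref{prodFr} without proof, and the telescoping cancellation you describe (the denominator $(1+F(r+1))_{n-1-r}$ of each factor coinciding with the numerator of the next, leaving only $(1+F(0))_n$ over $(1+F(n))_0=1$) is exactly the intended, routine justification. Both your direct cancellation and the inductive variant are valid; nothing further is needed.
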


The formula in Lemma \ref{pi0prod} can be used for $\prod\limits_{i=u}^{v}%
\Pi_{0}\left(  n,a-i\right)  =\prod\limits_{j=-v}^{-u}\Pi_{0}\left(
n,a+j\right)  $.

We split the computation of $\dfrac{\mathcal{P}\left(  \lambda,E_{R}\right)
}{\mathcal{R}_{0}\left(  \lambda^{-},E_{R}\right)  }$ into (1a) $\left(
\lambda_{i^{\prime}},\lambda_{j^{\prime}}\right)  =\left(  \mu_{i},\mu
_{j}\right)  $, (1b) $\left(  \lambda_{i^{\prime}},\lambda_{j^{\prime}%
}\right)  =\left(  s+1,\widetilde{\mu}_{j}\right)  $ and $\widetilde{\mu}%
_{j}>s+1$, (1c) $\left(  \lambda_{i^{\prime}},\lambda_{j^{\prime}}\right)
=\left(  s,\widetilde{\mu}_{j}\right)  $ and $\widetilde{\mu}_{j}>s$; (2a)
$\left(  \lambda_{i^{\prime}},\lambda_{j^{\prime}}\right)  =\left(
\widetilde{\mu}_{i},\widetilde{\mu}_{j}\right)  $, (2b) $\left(
\lambda_{i^{\prime}},\lambda_{j^{\prime}}\right)  =\left(  s+1_{i}%
,\widetilde{\mu}_{j}\right)  $ and $s+1>\widetilde{\mu}_{u}$, (2c) $\left(
\lambda_{i^{\prime}},\lambda_{j^{\prime}}\right)  =\left(  s,\widetilde{\mu
}_{j}\right)  $ and $s>\widetilde{\mu}_{u}$. The partial products are denoted
$P_{1a},P_{1b}$ etc..Lemmas \ref{pi0prod} and \ref{prodFr} are used to
simplify products of $\Pi_{0}$

Part (1a):%
\begin{align*}
P_{1a}  &  =\prod\limits_{i=1}^{k}\left(  1+\left(  M-i\right)  \kappa\right)
_{s+1}\prod\limits_{j=k+1}^{M-1}\left(  1+\left(  M-j\right)  \kappa\right)
_{s}\prod\limits_{i=1}^{k}\prod\limits_{j=k+1}^{M-1}\Pi_{0}\left(
1,j-i\right) \\
&  =\prod\limits_{i=1}^{k}\left(  1+\left(  M-i\right)  \kappa\right)
_{s+1}\prod\limits_{j=k+1}^{M-1}\left(  1+\left(  M-j\right)  \kappa\right)
_{s}\prod\limits_{i=1}^{k}\frac{1+\kappa\left(  k-i\right)  }{1+\kappa\left(
M-1-i\right)  }\\
&  =\prod\limits_{i=1}^{k-1}\left(  1+i\kappa\right)  \prod\limits_{i=1}%
^{k}\left(  2+\kappa\left(  M-i\right)  \right)  _{s}\prod\limits_{j=k+1}%
^{M-1}\left(  1+\kappa\left(  M-j\right)  \right)  _{s}\frac{1+\kappa\left(
M-1\right)  }{1+\kappa\left(  M-k-1\right)  }.
\end{align*}
If $k=0$ then $P_{1a}=\prod\limits_{i=1}^{M-1}\left(  1+\kappa i\right)  _{s}$.

Part (1b): $\mu_{i}=s+1>\widetilde{\mu}_{u}$, then $m+1-u<s+1,u>m-s$, set
$j=M+1-u$%
\begin{align*}
P_{1b}  &  =\prod\limits_{j=0}^{s}\prod\limits_{i=M-k}^{M-1}\Pi_{0}\left(
s+1-j,i+j\right)  =\\
&  =\prod\limits_{j=0}^{s}\frac{\left(  1+\kappa\left(  j+M-k-1\right)
\right)  _{s+1-j}\left(  1+\kappa\left(  j+M\right)  \right)  _{s-j}}{\left(
1+\kappa\left(  j+M-k\right)  \right)  _{s-j}\left(  1+\kappa\left(
j+M-1\right)  \right)  _{s+1-j}}\\
&  =\frac{\left(  1+\kappa\left(  M-k-1\right)  \right)  _{s+1}}{\left(
1+\kappa\left(  M-1\right)  \right)  _{s+1}}.
\end{align*}

Part (1c): $\mu_{i}=s>\widetilde{\mu}_{u}$, set $j=M+1-u$%

\begin{align*}
P_{1c}  &  =\prod\limits_{j=0}^{s-1}\prod\limits_{i=1}^{M-k-1}\Pi_{0}\left(
s-j,i+j\right) \\
&  =\prod\limits_{j=0}^{s-1}\frac{\left(  1+\kappa j\right)  _{s-j}\left(
1+\kappa\left(  j+M-k\right)  \right)  _{s-j-1}}{\left(  1+\kappa\left(
j+1\right)  \right)  _{s-j-1}\left(  1+\kappa\left(  j+M-k-1\right)  \right)
_{s-j}}\\
&  =\frac{\left(  1\right)  _{s}}{\left(  1+\kappa\left(  M-k-1\right)
\right)  _{s}}.
\end{align*}
Then%
\begin{align*}
P_{1a}P_{1b}P_{1c}  &  =\prod\limits_{i=1}^{k-1}\left(  1+i\kappa\right)
\prod\limits_{i=1}^{k}\left(  2+\kappa\left(  M-i\right)  \right)  _{s}%
\prod\limits_{j=k+1}^{M-1}\left(  1+\kappa\left(  M-j\right)  \right)
_{s}\frac{1+\kappa\left(  M-1\right)  }{1+\kappa\left(  M-k-1\right)  }\\
&  \times\frac{s!\left(  s+1+\kappa\left(  M-k-1\right)  \right)  }{\left(
1+\kappa\left(  M-1\right)  \right)  _{s+1}}\\
&  =s!\prod\limits_{i=1}^{k-1}\left(  1+i\kappa\right)  \prod\limits_{j=1}%
^{M-k-2}\left(  1+j\kappa\right)  _{s}\prod\limits_{l=M-k-1}^{M-2}\left(
2+l\kappa\right)  _{s}.
\end{align*}

Part (2a):%
\begin{align*}
P_{2a}  &  =\prod\limits_{i=1}^{m}\left(  1-i\kappa\right)  _{i}%
\prod\limits_{1\leq i<j\leq m+1}\Pi_{0}\left(  j-i,i-j\right) \\
&  =\prod\limits_{i=1}^{m}\left(  1-i\kappa\right)  _{i-1}\left(  m!\left(
1-\kappa\right)  ^{m}\right)  \frac{\left(  1-\left(  m+1\right)
\kappa\right)  _{m}}{m!\left(  1-\kappa\right)  ^{m}}\\
&  =\prod\limits_{i=1}^{m+1}\left(  1-i\kappa\right)  _{i-1}.
\end{align*}
To prove this assume the formula $\prod\limits_{1\leq i<j\leq n+1}\Pi
_{0}\left(  j-i,i-j\right)  =\Pi_{n}=\dfrac{\left(  1-\left(  n+1\right)
\kappa\right)  _{n}}{n!\left(  1-\kappa\right)  ^{n}}$, which is valid for
$n=1$ by Lemma \ref{prodjj} . Then%
\begin{align*}
\frac{\Pi_{n+1}}{\Pi_{n}}  &  =\prod\limits_{i=1}^{n+1}\Pi_{0}\left(
n+2-i,i-n-2\right)  =\prod\limits_{j=1}^{n+1}\Pi_{0}\left(  j,-j\right) \\
&  =\frac{1}{\left(  n+1\right)  \left(  1-\kappa\right)  }\frac{\left(
1-\left(  n+2\right)  \kappa\right)  _{n+1}}{\left(  1-\left(  n+1\right)
\kappa\right)  _{n}}.
\end{align*}
This proves the formula for $\Pi_{n}$ by induction and thus also the formula
for $P_{2a}$.

Part (2b): $\mu_{i}=s+1<\widetilde{\mu}_{u}$, thus $s+1<m+1-u\leq m$ and%
\begin{align*}
P_{2b}  &  =\prod\limits_{j=s+2}^{m}\prod\limits_{i=M-k}^{M-1}\Pi_{0}\left(
j-s-1,-j-i\right) \\
&  =\prod\limits_{j=s+2}^{m}\frac{\left(  1-\kappa\left(  M+j\right)  \right)
_{j-s-1}\left(  1-\kappa\left(  M+j-k-1\right)  \right)  _{j-s-2}}{\left(
1-\kappa\left(  M+j-1\right)  \right)  _{j-s-2}\left(  1-\kappa\left(
M-k+j\right)  \right)  _{j-s-1}}\\
&  =\frac{\left(  1-\kappa\left(  M+m\right)  \right)  _{m-s-1}}{\left(
1-\kappa\left(  M+m-k\right)  \right)  _{m-s-1}}.
\end{align*}

Part (2c): $\mu_{i}=s<\widetilde{\mu}_{u}$, thus $s<m+1-u\leq m$ and (by Lemma
\ref{pi0prod} with $a=-j$, $u=1+k-M$, $v=-1,$and then by Lemma \ref{prodFr})%
\begin{align*}
P_{2c}  &  =\prod\limits_{j=s+1}^{m}\prod\limits_{i=1}^{M-k-1}\Pi_{0}\left(
j-s,-j-i\right) \\
&  =\prod\limits_{j=s+1}^{m}\frac{\left(  1-\kappa\left(  M+j-k\right)
\right)  _{j-s}\left(  1-j\kappa\right)  _{j-s-1}}{\left(  1-\kappa\left(
M+j-k-1\right)  \right)  _{j-s-1}\left(  1-\kappa\left(  j+1\right)  \right)
_{j-s}}\\
&  =\frac{\left(  1-\kappa\left(  M+m-k\right)  \right)  _{m-s}}{\left(
1-\kappa\left(  m+1\right)  \right)  _{m-s}}.
\end{align*}
Then%
\begin{align*}
P_{2a}P_{2b}P_{2c}  &  =\prod\limits_{i=1}^{m+1}\left(  1-i\kappa\right)
_{i-1}\frac{\left(  1-\kappa\left(  M+m\right)  \right)  _{m-s-1}}{\left(
1-\kappa\left(  M+m-k\right)  \right)  _{m-s-1}}\frac{\left(  1-\kappa\left(
M+m-k\right)  \right)  _{m-s}}{\left(  1-\kappa\left(  m+1\right)  \right)
_{m-s}}\\
&  =\prod\limits_{i=1}^{m}\left(  1-i\kappa\right)  _{i-1}\left(
m-s+1-\kappa\left(  m+1\right)  \right)  _{s}\left(  1-\kappa\left(
M+m\right)  \right)  _{m-s-1}\\
&  \times\left(  m-s-\kappa\left(  M+m-k\right)  \right)  .
\end{align*}
This concludes the proof of the formula for $\left\Vert p_{\lambda
,E}\right\Vert ^{2}$, up to a constant independent of $\kappa$.

The content products for the $\left(  N-m,1^{m}\right)  $ situations are
computed with the usual telescoping (with $s\geq1$)%
\begin{align*}
\mathcal{C}_{0}^{\left(  0\right)  }\left(  E_{R}\right)   &  =\frac
{m!}{s!\left(  M+s+1\right)  _{m-s-1}\left(  M+s-k\right)  },\\
\mathcal{C}_{1}^{\left(  0\right)  }\left(  E_{S}\right)   &  =\frac{\left(
M+s+1\right)  _{m-s}\left(  M+s-k\right)  s!}{\left(  m+1\right)  !},\\
\frac{\mathcal{C}_{0}^{\left(  0\right)  }\left(  E_{R}\right)  }%
{\mathcal{C}_{1}^{\left(  0\right)  }\left(  E_{S}\right)  }  &  =\left\{
\frac{m!}{s!\left(  M+s+1\right)  _{m-s-1}\left(  M+s-k\right)  }\right\}
^{2}\frac{m+1}{m+M}.
\end{align*}
If $s=0$ then%
\[
\mathcal{C}_{0}^{\left(  0\right)  }\left(  E_{R}\right)  =\frac{m!}{\left(
M+1\right)  _{m-1}\left(  M-k\right)  },\mathcal{C}_{1}^{\left(  0\right)
}\left(  E_{S}\right)  =\frac{\left(  M+1\right)  _{m}}{\left(  m+1\right)
!}.
\]
Similar calculations can be used to find $\mathcal{C}_{0}^{\left(  1\right)
}\left(  E_{R}\right)  ,\mathcal{C}_{1}^{\left(  1\right)  }\left(
E_{R}\right)  $ for the $\left(  N-m+1,1^{m-1}\right)  $ case (replace $m$ by
$m-1$ in the $\left(  \mu,\widetilde{\mu}\right)  $-notation). The order of
the stabilizer group of $E_{R}$ is $\#G_{E_{R}}=k!\left(  M-k-1\right)  !$
when $s\geq1$ and $k!\left(  M-k\right)  !$ when $s=0$.

\section{Further Results\label{further}}

\subsection{Antisymmetric polynomials}

Recall the duality map $\delta$ from Definition \ref{defdual}. When this map
is applied to $\mathcal{P}_{m,0}$ the NSJP's transform to NSJP's in
$\mathcal{P}_{N-m,1}$ but with the parameter $\kappa$ changed to $-\varkappa$.
We use the notation $\mathcal{D}\left(  \kappa\right)  _{i},\mathcal{U}\left(
\kappa\right)  _{i}$ to indicate the parameter. Apply $\delta$ to
$\mathcal{D}\left(  \kappa\right)  _{i}\sum_{E}p_{E}\left(  x\right)  \phi
_{E}\left(  \theta\right)  $ to obtain%
\begin{align*}
\delta\mathcal{D}\left(  \kappa\right)  _{i}p  &  =\sum_{E}\frac{\partial
}{\partial x_{i}}p_{E}\left(  x\right)  \delta\phi_{E}+\kappa\sum_{j\neq
i}\sum_{E}\frac{p_{E}\left(  x\right)  -p_{E}\left(  x\left(  i,j\right)
\right)  }{x_{i}-x_{j}}\delta\left(  i,j\right)  \phi_{E}\\
&  =\left\{  \sum_{E}\frac{\partial}{\partial x_{i}}p_{E}\left(  x\right)
\delta\phi_{E}-\kappa\sum_{j\neq i}\sum_{E}\frac{p_{E}\left(  x\right)
-p_{E}\left(  x\left(  i,j\right)  \right)  }{x_{i}-x_{j}}\left(  i,j\right)
\delta\phi_{E}\right\} \\
&  =\mathcal{D}\left(  -\kappa\right)  _{i}\delta p.
\end{align*}
Similarly $\delta\mathcal{U}\left(  \kappa\right)  _{i}p=\mathcal{U}\left(
-\kappa\right)  _{i}\delta p$. Suppose $\alpha\in\mathbb{N}_{0}^{N}$ and
$E\in\mathcal{E}_{0}$ then by (\ref{dNJP})%
\[
\delta J_{\alpha,E}\left(  x;\theta\right)  =x^{\alpha}\delta\left(
r_{\alpha}^{-1}T_{E}\right)  +\sum_{\alpha\rhd\beta}x^{\beta}\delta
v_{\alpha,\beta,T}\left(  \kappa;\theta\right)  ,,
\]
and $\delta\left(  r_{\alpha}^{-1}T_{E}\right)  =\sigma\left(  \ell\left(
r_{\alpha}^{-1}\right)  \right)  r_{\alpha}^{-1}\delta T_{E}$ and $\delta
T_{E}=\left(  -1\right)  ^{e}T_{E^{C}}$ (the power of $-1$ can be determined
from Proposition\ref{deltapsi}. Further $\ell\left(  r_{\alpha}^{-1}\right)
=\#\left\{  \left(  i,j\right)  :i<j,w(i)>w(j)\right\}  $. Thus $\delta
J_{\alpha,E}\left(  x,\theta\right)  =\left(  -1\right)  ^{b}J_{\alpha,E^{C}%
}\left(  x;\theta\right)  $, a NSJP for $-\kappa$. The spectral vector stays
the same because $\alpha_{i}+1+\kappa c\left(  r_{\alpha}\left(  i\right)
,E\right)  =\alpha_{i}+1-\kappa c\left(  r_{\alpha}\left(  i\right)
,E^{C}\right)  .$Similar considerations apply to $E\in\mathcal{E}_{1}$.

Suppose $p\left(  x;\theta\right)  =\sum\limits_{E}p_{E}\left(  x\right)
\phi_{E}\left(  \theta\right)  $ is supersymmetric, that is $s_{i}p\left(
x;\theta\right)  =\sum\limits_{E}p_{E}\left(  xs_{i}\right)  s_{i}\phi
_{E}\left(  \theta\right)  =p\left(  x;\theta\right)  $ for $1\leq i<N$ then
$s_{i}\delta p=-\delta s_{i}p=-\delta p$ and $\delta p$ is antisymmetric. Thus
by defining a supersymmetric Jack polynomial $p_{\lambda,E^{C}}$ in
$\mathcal{P}_{N-m,1}$ using the appropriate modification of Theorem
\ref{superPthm} with $-\kappa$ we obtain an antisymmetric polynomial $\delta
p_{\lambda,E^{C}}$ which is an eigenfunction of $\sum_{i=1}^{N}\mathcal{U}%
\left(  \kappa\right)  _{i}^{2}$. Suppose $E\in\mathcal{E}_{0}$ and the
tableau $\left\lfloor \lambda,E\right\rfloor $ is as in Definition
\ref{defMaE} then $\mathcal{M}\left(  \lambda,E\right)  $ contains a unique
(up to a constant multiple) nonzero antisymmetric polynomial if and only if
$\left\lfloor \lambda,E\right\rfloor $ is row-strict (this applies only to row
1, of course; see \cite[Thm. 5.12]{DL2011}).

\begin{example}
let $N=9,m=3,E=\left\{  3,4,6,9\right\}  $and $\lambda=(5,4,3.3.3.2.2.1.0\}$
then%
\begin{align*}
Y_{E^{C}} &  =%
\begin{bmatrix}
9 & 6 & 4 & 3 &  & \\
\circ & 8 & 7 & 5 & 2 & 1
\end{bmatrix}
,\left\lfloor \lambda,E^{C}\right\rfloor =%
\begin{bmatrix}
0 & 2 & 3 & 3 &  & \\
\circ & 1 & 2 & 3 & 4 & 5
\end{bmatrix}
,\\
Y_{E} &  =%
\begin{bmatrix}
9 & 8 & 7 & 5 & 2 & 1\\
\circ & 6 & 4 & 3 &  &
\end{bmatrix}
,\left\lfloor \lambda,E\right\rfloor =%
\begin{bmatrix}
0 & 1 & 2 & 3 & 4 & 5\\
\circ & 2 & 3 & 3 &  &
\end{bmatrix}
.
\end{align*}
Then $p_{\lambda,E^{C}}\in\mathcal{P}_{6,1}$ and is of isotype $\left(
4,1^{5}\right)  $ while $\delta p_{\lambda,E^{C}}$ is antisymmetric and in
$\mathcal{M}\left(  \lambda,E\right)  $, of isotype $\left(  6,1^{3}\right)  $.
\end{example}

\subsection{Wavefunctions on the torus}

This is a sketch of how the polynomials $s\mathcal{P}_{m}$ can be interpreted
on the unit circle. The quantum Calogero-Sutherland model for $N$ identical
particles with $1/r^{2}$ interactions on the circle has the Hamiltonian%
\begin{align*}
\mathcal{H}  &  =\mathcal{-}\sum_{i=1}^{N}\left(  \frac{\partial}{\partial
\phi_{i}}\right)  ^{2}+\frac{1}{2}\sum_{1\leq i<j\leq N}\frac{\kappa\left(
\kappa-1\right)  }{\sin^{2}\left(  \frac{1}{2}\left(  \phi_{i}-\phi
_{j}\right)  \right)  }\\
&  =\sum_{i=1}^{N}\left(  x_{i}\frac{\partial}{\partial x_{i}}\right)
^{2}-2\kappa\sum_{1\leq i<j\leq N}\frac{x_{i}x_{j}\left(  \kappa-1\right)
}{\left(  x_{i}-x_{j}\right)  ^{2}},
\end{align*}
where $x\in\mathbb{T}^{N}$; the torus and its surface measure in terms of
polar coordinates are%
\begin{align*}
\mathbb{T}^{N}  &  :=\left\{  x\in\mathbb{C}^{N}:\left\vert x_{j}\right\vert
=1,1\leq j\leq N\right\}  ,\\
\mathrm{d}m\left(  x\right)   &  =\left(  2\pi\right)  ^{-N}\mathrm{d}\phi
_{1}\cdots\mathrm{d}\phi_{N},~x_{j}=\exp\left(  \mathrm{i}\phi_{j}\right)
,-\pi<\phi_{j}\leq\pi,1\leq j\leq N.
\end{align*}
The time-independent Schr\"{o}dinger equation $\mathcal{H}\psi=E\psi$ has
solutions expressible as the product of the base-state%
\[
\psi_{0}\left(  x\right)  =\prod\limits_{1\leq i<j\leq N}\left\vert 2\sin
\frac{\phi_{i}-\phi_{j}}{2}\right\vert ^{\kappa}=\prod\limits_{1\leq i<j\leq
N}\left(  -\frac{\left(  x_{i}-x_{j}\right)  ^{2}}{x_{i}x_{j}}\right)
^{\kappa/2}%
\]
with Jack polynomials (Lapointe and Vinet \cite{LV1996}). We generalized this
result by introducing a matrix-valued base state \cite{D2017}. In the present
context this is a $\binom{N}{m}\times\binom{N}{m}$-matrix valued function $L$
of $x$ defined on $\mathbb{C}^{N}\backslash\bigcup{}_{i<j}\left\{
x:x_{i}=x_{j}\right\}  $ . Recall the dual basis $\left\{  \widehat{\phi_{E}%
}\right\}  $ defined in \ref{Tnorms} and express $L$ as%
\begin{align*}
L\left(  x\right)   &  =\sum_{E,F}L_{EF}\left(  x\right)  \phi_{E}%
\otimes\widehat{\phi_{F}},\\
L\left(  x\right)  \sum_{F}a_{F}\phi_{F}  &  =\sum_{E}\left(  \sum_{F}%
L_{EF}\left(  x\right)  c_{F}\right)  \phi_{E}.
\end{align*}
Because $\mathcal{P}_{m}$ splits into two irreducible $\mathcal{S}_{N}%
$-modules the matrix $L\left(  x\right)  $ is also split into $L_{0}\left(
x\right)  ,L_{1}\left(  x\right)  $ with%
\begin{align*}
\left(  DM\right)  L_{0}\left(  x\right)   &  =L_{0}\left(  x\right)  \left(
DM\right)  =NL_{0}\left(  x\right)  ,\\
\left(  MD\right)  L_{1}\left(  x\right)   &  =L_{1}\left(  x\right)  \left(
MD\right)  =NL_{1}\left(  x\right)  .
\end{align*}
Let $\gamma_{0}=\dfrac{N-2m-1}{2}$ and $\gamma_{1}=\dfrac{N-2m+1}{2}$, the
average contents for $\left(  N-m,1^{m}\right)  $, $\left(  N-m+1,1^{m-1}%
\right)  $ respectively. The differential system for $L\left(  x\right)  $ is
(for $s=0,1$)%
\[
\frac{\partial}{\partial x_{i}}L_{s}\left(  x\right)  =\kappa L_{s}\left(
x\right)  \left\{  \sum_{j\neq i}\frac{1}{x_{i}-x_{j}}\left(  i,j\right)
-\frac{\gamma_{s}}{x_{i}}I\right\}  ,1\leq i\leq N,
\]
where $L\left(  x\right)  \left(  i,j\right)  =\sum_{E,F}L_{EF}\left(
x\right)  \phi_{E}\otimes\widehat{\left(  i,j\right)  \phi_{F}}$, and $I$ is
the identity matrix. The effect of the term $\frac{\gamma_{s}}{x_{i}}I$ is to
make $L_{s}\left(  x\right)  $ homogeneous of degree $0$, that is,
$L_{s}\left(  cx\right)  =L_{s}\left(  x\right)  $ for $c\in\mathbb{C}%
\backslash\left\{  0\right\}  $: indeed%
\[
\sum_{i=1}^{N}x_{i}\frac{\partial}{\partial x_{i}}L_{s}\left(  x\right)
=\kappa L_{s}\left(  x\right)  \left\{  \sum_{1\leq i<j\leq N}\left(
i,j\right)  -N\gamma_{s}I\right\}  ,
\]
and if $p\left(  \theta\right)  \in\mathcal{P}_{m,0}$ then $\sum_{i<j}\left(
i,j\right)  p\left(  \theta\right)  =\sum_{k=1}^{N}c\left(  k,E_{0}\right)
=\frac{\left(  N-m\right)  \left(  N-m-1\right)  }{2}-\frac{m\left(
m+1\right)  }{2}=\gamma_{0}N$ (and similarly for $\mathcal{P}_{m,1}$). Let
$\mathbb{T}_{reg}^{N}=\mathbb{T}^{N}\backslash\bigcup\limits_{i<j}\left\{
x:x_{i}=x_{j}\right\}  $, then $\mathbb{T}_{reg}^{N}$ has $\left(  N-1\right)
!$ connected components and each component is homotopic to a circle. Let
$x_{0}:=\left(  1,e^{2\pi\mathrm{i}/N},e^{4\pi\mathrm{i}/N},\ldots,e^{2\left(
N-1\right)  \pi\mathrm{i}/N}\right)  $ and denote the connected component of
$\mathbb{T}_{reg}^{N}$ containing $x_{0}$ by $\mathcal{C}_{0}$, called the
\textit{fundamental chamber}. Thus $\mathcal{C}_{0}$ is the set consisting of
$\left(  e^{\mathrm{i}\theta_{1}},\ldots,e^{\mathrm{i}\theta_{N}}\right)  $
with $\theta_{1}<\theta_{2}<\cdots<\theta_{N}<\theta_{1}+2\pi$. The
homogeneity $L\left(  ux\right)  =L\left(  x\right)  $ for $\left\vert
u\right\vert =1$ shows that $L\left(  x\right)  $ has a well-defined analytic
continuation to all of $\mathcal{C}_{0}$ starting from $x_{0}$. Briefly, there
is a method to define $L_{s}\left(  x\right)  $ on the other connected
components of $\mathbb{T}_{reg}^{N}$ so that when restricted to supersymmetric
polynomials$\in$%
\[
L_{s}\left(  x\right)  \sum_{i=1}^{N}\left(  \mathcal{U}_{i}-1-\kappa
\gamma_{s}\right)  ^{2}L_{s}\left(  x\right)  ^{-1}=\sum_{i=1}^{N}\left(
x_{i}\partial_{i}\right)  ^{2}-2\kappa\sum_{1\leq i<j\leq N}\frac{x_{i}%
x_{j}\left(  \kappa-1\right)  }{\left(  x_{i}-x_{j}\right)  ^{2}}%
=\mathcal{H}.
\]
Thus if $p_{\lambda,E}$ is the supersymmetric Jack polynomial associated with
$\left\lfloor \lambda,E\right\rfloor $ where $E\in\mathcal{E}_{0}$ and
$\left\lfloor \lambda,E\right\rfloor $ is column-strict then
\[
\mathcal{H}\left(  L_{0}\left(  x\right)  p_{\lambda,E}\left(  x;\theta
\right)  \right)  =\sum_{i=1}^{N}\left(  \lambda_{i}+\kappa\left(  c\left(
i,E\right)  -\gamma_{0}\right)  \right)  ^{2}L_{0}\left(  x\right)
p_{\lambda,E}\left(  x;\theta\right)  .
\]
This is not the same Hamiltonian defined in \cite{DLM2001}; in that paper the
coupling constant satisfies $\kappa>1$. In terms of the $\left(
\mu,\widetilde{\mu}\right)  $ notation from (\ref{munote}) the eigenvalue is%
\[
\sum_{i=1}^{m+1}\left(  \widetilde{\mu}_{i}+\kappa\left(  i-\frac{N+1}%
{2}\right)  \right)  ^{2}+\sum_{i=1}^{N-m-1}\left(  \mu_{i}+\kappa\left(
\frac{N+1}{2}-i\right)  \right)  ^{2}.
\]
The supersymmetric polynomial of lowest degree has $\mu_{i}=0$ for $1\leq
i\leq N-m-1$ and $\widetilde{\mu}_{i}=m+1-i$ for $1\leq i\leq m+1$ and its
eigenvalue is%
\[
\frac{1}{6}m\left(  m+1\right)  \left\{  \left(  2m+1\right)  \left(
1+\kappa\right)  -3\kappa N\right\}  +\frac{\kappa^{2}}{12}N\left(
N^{2}-1\right)  .
\]
To get the eigenvalues for the isotype $\left(  N-m+1,1^{m-1}\right)  $
replace $m$ by $m-1$ in the formulas.

The adjoint map is defined by $\left\{  \sum_{E}f_{E}\left(  x\right)
\phi_{E}\left(  \theta\right)  \right\}  ^{\ast}\sum_{F}g_{F}\left(  x\right)
\phi_{F}\left(  \theta\right)  =\sum_{E}\overline{f_{E}\left(  x\right)
}g_{E}\left(  x\right)  $. There is a normalization of $L_{s}\left(  x\right)
$ so that for $f,g$%
\[
\int_{\mathbb{T}^{N}}\left\{  L_{s}\left(  x\right)  f\left(  x;\theta\right)
\right\}  ^{\ast}L_{s}\left(  x\right)  g\left(  x;\theta\right)
\mathrm{d}m\left(  x\right)  =\left\langle f,g\right\rangle _{\mathbb{T}},
\]
where $\left\langle f,g\right\rangle _{\mathbb{T}}$ is a conjugate-linear
inner product on $s\mathcal{P}_{m}$ satisfying ($1\leq i\leq N$)%
\begin{align*}
\left\langle wf,wg\right\rangle _{\mathbb{T}}  &  =\left\langle
f,g\right\rangle _{\mathbb{T}},~w\in\emph{S}_{N},\\
\left\langle x_{i}\mathcal{D}_{i}f,g\right\rangle _{\mathbb{T}}  &
=\left\langle f,x_{i}\mathcal{D}_{i}g\right\rangle _{\mathbb{T}},\\
\left\langle x_{i}f,x_{i}g\right\rangle _{\mathbb{T}}  &  =\left\langle
f,g\right\rangle _{\mathbb{T}}.
\end{align*}
This inner product is different from those studied in \cite{DLM2007}. These
properties imply that multiplication by $x_{i}$ is an isometry and that each
$\mathcal{U}_{i}$ is self-adjoint, thus $\left(  \alpha,E\right)  \neq\left(
\beta,F\right)  $ implies $\left\langle J_{\alpha,E},J_{\beta,F}\right\rangle
_{\mathbb{T}}=0$. Also $\left\langle J_{\alpha,E},J_{\alpha,E}\right\rangle
_{\mathbb{T}}=\left\{  \prod\limits_{i=1}^{N}\left(  1+\kappa c\left(
i,E\right)  \right)  _{\alpha_{i}^{+}}\right\}  ^{-1}\left\Vert J_{\alpha
,E}\right\Vert ^{2}$ (from Theorems \ref{Jnorm1} and \ref{Jnorm2}). The
details can be found in \cite{D2017}.


\begin{thebibliography}{99}                                                                                               %


\bibitem {BF1999}T. H. Baker and P. J. Forrester, Symmetric Jack polynomials
from non-symmetric theory, \textit{Ann. Comb. }\textbf{3 }(1999), 159-170.

\bibitem {DLM2001}P. Desrosiers, L. Lapointe, and P. Mathieu, Supersymmetric
Calogero-Moser-Sutherland models and Jack superpolynomials,
\textit{Nucl.Phys.} \textbf{B606} (2001) 547-582.

\bibitem {DLM2003a}P. Desrosiers, L. Lapointe, and P. Mathieu, Jack
superpolynomials, superpartition ordering and determinantal formulas,
\textit{Commun.Math.Phys.} \textbf{233} (2003) 383-402

\bibitem {DLM2003}P. Desrosiers, L. Lapointe, and P. Mathieu, Jack polynomials
in superspace, \textit{Commun. Math. Phys.} \textbf{242} (2003) 331-360

\bibitem {DLM2007}P. Desrosiers, L. Lapointe, and P. Mathieu, Orthogonality of
Jack polynomials in superspace, \textit{Adv. Math.} \textbf{212} (2007) 361-388.

\bibitem {D2017}C. F. Dunkl, Vector-valued Jack polynomials and wavefunctions
on the torus, \textit{J. Phys. A: Math. Theor.} \textbf{50} (2017) 245201 (21pp)

\bibitem {DL2011}C. F. Dunkl and J.-G. Luque, Vector-valued Jack polynomials
from scratch, \textit{SIGMA} \textbf{7} (2011) 26, 48 pp,

\bibitem {G2010}S. Griffeth, Orthogonal functions generalizing Jack
polynomials, \textit{Trans. Amer. Math. Soc.} \textbf{362} (2010), 6131-6157

\bibitem {JK1991}G. James and A. Kerber, \textit{The Representation Theory of
the Symmetric Group}, Encyc. of Math. and its Applic. \textbf{16},
Addison-Wesley, Reading MA, 1981; Cambridge University Press, Cambridge, 2009.

\bibitem {LV1996}L. Lapointe and L. Vinet, Exact operator solution of the
Calogero-Sutherland model, \textit{Comm. Math. Phys.} \textbf{178}, (1996), 425-452.

\bibitem {S1999}R. P. Stanley, \textit{Enumerative Combinatorics}, Vol. 2,
Cambridge Studies in Advanced Mathematics 62, Cambridge Univ. Press, Cambridge 1999.
\end{thebibliography}
\end{document}